\documentclass[12pt]{amsart}
\usepackage{amssymb}
\usepackage{amsmath}
\usepackage{amsthm}
\usepackage{times}
\usepackage{changepage}
\usepackage{caption}
\usepackage{geometry}
\usepackage[curve,all]{xy}
\xyoption{curve}
\xyoption{line}
\xyoption{arc}
\xyoption{color}
\usepackage{array}
\usepackage{enumitem}
\usepackage{graphicx}
\usepackage[dvipdfmx]{color}
\usepackage{color}
\usepackage{amsthm}
\newtheorem{theorem}{Theorem}[section]
\newtheorem{lemma}[theorem]{Lemma}
\newtheorem{corollary}[theorem]{Corollary}
\newtheorem{proposition}[theorem]{Proposition}

\usepackage{subfig}
\usepackage{hyperref}
 
\theoremstyle{definition}
\newtheorem{definition}[theorem]{Definition}
\newtheorem{example}[theorem]{Example}

\theoremstyle{remark}
\newtheorem{remark}[theorem]{Remark}

\newtheorem{convention}[theorem]{Convention}
\newtheorem{summary}[theorem]{Summary}

\numberwithin{equation}{section}

\newcommand{\bbC}{\mathbb{C}}

\newcommand{\bbP}{\mathbb{P}}

\newcommand{\bbZ}{\mathbb{Z}}

\def\Pic{{\mathrm{Pic}}}

\def\rank{{\text{rank}}}

\def\NS{{\rm{NS}}}

\def\deg{{\text{deg}}}

\def\I{{\text{I}}}

\def\IV{{\text{IV}}}

\begin{document}

\title{The 3-divisibility of divisors on K3 surfaces in characteristic 3}

\author{Toshiyuki Katsura}
\address{Graduate School of Mathematical Sciences, The University of Tokyo, Tokyo,
153-8914, Japan}
\email{tkatsura@g.ecc.u-tokyo.ac.jp}

\author{Matthias Sch\"utt}
\address{Institut f\"ur Algebraische Geometrie, 
Leibniz Universit\"at Hannover, Welfengarten 1, 30167 Hannover, Germany}

\address{Riemann Center for Geometry and Physics, Leibniz University
  Hannover, Appelstrasse 2, 30167 Hannover, Germany}
\email{schuett@math.uni-hannover.de}

\thanks{Research of the first author is partially supported by JSPS Grant-in-Aid 
for Scientific Research (C) No.26K06737. 
The second author's research is partly conducted in the framework of the research training
group GRK 2965: From Geometry to Numbers,
funded by DFG}


\date{April 28, 2026}

\begin{abstract}
We  describe the possible $3$-divisible $A_2^n$ configurations of smooth rational curves
on K3 surfaces in characteristic $3$ and fully classify the resulting triple covers.
\end{abstract}

\maketitle

\section{Introduction}

In the theory of algebraic surfaces, special configurations of smooth rational curves
can impose crucial geometric properties.
Notably, they can induce surprising divisibility properties in the Picard group
which in turn lead to the existence of particular covers of the given surface
(which by themselves can be quite subtle).
A key instance is provided by elliptic surfaces with torsion sections
as these translate into divisibilities among fibre components 
encoded in the theory of Mordell--Weil lattices \cite{SS}.
Special attention has been paid to the case of K3 surfaces
where the cover often again is K3, or an abelian surface.
In fact, the cases over $\bbC$ and in characteristic relatively prime to the order of divisibilty
are very well understood, see \cite{Barth1}, \cite{Barth2}, \cite{KS1}.
We should like to emphasize that they are closely related to finite symplectic automorphism groups,
following Xiao's take \cite{Xiao} on Nikulin's and Mukai's 
seminal classification results \cite{Nikulin-finite}, \cite{Mukai}
(see also \cite{DKeum}, \cite{OS} for the positive characteristic case).

The case where the order of divisibility and the characteristic agree is much more subtle
as our previous considerations in characteristic $2$ in \cite{KKS} reflect.
Here we turn to the 3-divisibility on K3 surfaces in characteristic $3$ 
which comes with yet different interesting features.
More precisely, we consider a K3 surface $X$ with an $A_2^n$ configuration 
of smooth rational curves which is 3-divisible (see Section \ref{s:def} for details).
We will show quite easily that $n=6$ or $9$ (see Corollary \ref{6,9}).
Then our main results are as follows.

\begin{theorem}
\label{thm6}
\begin{enumerate}
\item[(i)]
If $n=6$, then either $X$ has finite height $h\leq 4$ and Picard number 
satisfying $13\leq \rho(X)\leq 22-2h$, or 
it is supersingular of Artin invariant $\sigma\leq 7$.
\item[(ii)]
The triple cover is either rational or K3.
\item[(iii)]
Conversely, there are K3 surfaces of Picard number $13$ admitting a $3$-divisible configuration of 6 $A_2$,
and any supersingular K3 surface of Artin invariant $\sigma\leq 7$
admits a $3$-divisible configuration of 6 $A_2$.
\end{enumerate}
\end{theorem}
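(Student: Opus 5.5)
Our plan is to work throughout with the triple cover $\pi\colon Y\to X$ attached to the $3$-divisible divisor $D=\sum_{i=1}^{6}(C_{i,1}+2C_{i,2})$, where $C_{i,1},C_{i,2}$ are the two curves of the $i$-th $A_2$. In characteristic $3$ such a cover is not étale in the classical sense. It is a $\mu_3$- or $\alpha_3$-torsor (purely inseparable, built from a $p$-closed vector field or logarithmic form) over $X\setminus\bigcup C_{i,j}$, or a $\bbZ/3$ Artin--Schreier cover. Which of these occurs is governed by the formal Brauer group, i.e.\ by the height of $X$.

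\textbf{Step 1: parts (ii) and the lattice data.} First we contract the six $A_2$ configurations to obtain $\bar X$ with six $A_2$ singularities. The cover of $\bar X$ is then unramified in codimension one. The canonical class of a minimal resolution $\tilde Y$ can be computed via the formula $K_Y=\pi^*K_{\bar X}+(\text{correction from the vector field/derivation divisor})$. Since $n=6$ (Corollary \ref{6,9}), the Euler-number/Noether count forces $\chi(\mathcal O_{\tilde Y})$ to be $2$ or $1$. A purely inseparable cover is birational to a Frobenius twist of a surface dominated by $X^{(1/3)}$. The standard argument (as in the characteristic $2$ case \cite{KKS}) gives that $\tilde Y$ is either K3 or rational, according to whether the associated vector field has isolated zeros or a divisorial part. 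This proves (ii). In parallel, the saturation $L$ of the lattice $A_2^6$ in $\NS(X)$ has index $3$. Adding the polarization gives a sublattice of rank $13$, so $\rho(X)\ge 13$.

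\textbf{Step 2: the height bounds in (i).} For finite height $h$ we use the inequality $\rho\le 22-2h$, which holds for any K3 surface of finite height. We must also show $h\le 4$. Here the divisibility is essential: the class $D/3$ gives a nontrivial element of $\mathrm{Pic}$ modulo $3$ that dies under Frobenius pullback, or produces a $\mu_3$-torsor. Translating through flat cohomology $H^1_{fl}(X\setminus C,\mu_3)$ and the formal Brauer group, the existence of the torsor forces the $p$-adic Tate module rank/discriminant conditions. These in turn bound $h$, with the lattice requirement that $\NS$ contain $A_2^6$ plus the index-$3$ glue. In the supersingular case, the discriminant of $\NS$ is $-3^{2\sigma}$. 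Embedding $A_2^6$ with its index-$3$ overlattice, of discriminant $3^4$, into $\NS$ forces $2\sigma\le 22-\rho(L^\perp)$-type bounds. The explicit bound $\sigma\le 7$ should come from the length of the discriminant group of $L^\perp$ in $\NS$: $\ell\le 22-12=10$ combined with $\ell(L)=4$. We expect this step, especially the bound $h\le 4$, to be the main obstacle. We would first try to realize the K3-cover case as a quotient of a K3 by a $\mu_3$-action, and then apply known constraints on fixed loci of $\mu_3$ on K3 surfaces in characteristic $3$, which relate to height.

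\textbf{Step 3: existence (iii).} For the supersingular case we would use Ogus' crystalline Torelli theorem. For each $\sigma\le 7$ we embed the index-$3$ overlattice of $A_2^6$ into the supersingular K3 lattice $\Lambda_\sigma$ by explicit lattice theory (discriminant form comparison). Then we adjust by Weyl reflections so that the roots become classes of smooth rational curves forming the configuration. For Picard number $13$, we construct an explicit family. Take an elliptic K3 with six fibres of type $\IV$ or $\I_3$, or alternatively the quotient of a suitable K3 (e.g.\ a Kummer-type or Fermat-type surface) by a $\mu_3$-action with six isolated fixed points. We then verify $\rho=13$ at a generic member, using a specialization or height computation.
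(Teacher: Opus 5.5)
The decisive gap is the $\rho=13$ half of (iii). Your first construction cannot work. On a genus one fibration with six reducible fibres of type $\I_3$ or $\IV$, the fibre class, two components of each such fibre and any horizontal curve are linearly independent in $\NS(X)$, so $\rho(X)\ge 14$. The $\mu_3$-quotient alternative names no family, and the examples you mention are far off: the Fermat quartic is supersingular in characteristic $3$, and Kummer surfaces have $\rho\ge 17$. Nothing in your sketch rules out that a generic member is supersingular.

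The paper instead takes the $7$-dimensional family of double sextics $w^2+wf=g^3$ (Example~\ref{ex:Shimada}), where the two preimages of the conic $g=0$ give the divisible $6A_2$. By the moduli theory of lattice polarized K3 surfaces, the very general member has $\rho=13$ or is supersingular with $\sigma\ge 8$. The latter is excluded by $2\sigma\le l(A_M)+\rank M^\perp=4+9$ for $M=\langle 2\rangle\oplus L$.

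For the supersingular half of (iii), your embedding-plus-reflections strategy is sound in principle. The real work, which you leave undone, is producing for every $\sigma\le 7$ an embedding of $L$ into $\Lambda_\sigma$ whose saturation has no roots beyond $6A_2$; otherwise the reflected simple roots need not be irreducible curves. The paper does this explicitly. $\Lambda_7\cong U(3)\oplus L\oplus E_8(3)$ yields an elliptic fibration with six fibres of type $\I_3$ or $\IV$ carrying the divisibility. For $\sigma\le 6$ it uses decompositions giving genus one fibrations with at least two $\IV^*$ fibres, to which Example~\ref{ex:2E_6} applies. Crystalline Torelli is not needed, only the uniqueness of $\Lambda_\sigma$.

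The remaining steps are misjudged or only gestured at. In (i), $h\le 4$ is no obstacle: $13\le\rho\le 22-2h$ gives $2h\le 9$ at once. The flat-cohomology detour is superfluous, and as written it yields no bound. Your supersingular bound is the paper's argument, once you write $2\sigma=l(A_{\NS(X)})\le l(A_L)+l(A_{L^\perp})\le 4+10$ using that $\NS(X)$ is $3$-elementary.

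In (ii), the cube-root cover is always the purely inseparable one; there is no Artin--Schreier alternative governed by the height. The claim that a Noether count forces $\chi(\mathcal O_{\tilde Y})\in\{1,2\}$ is unsupported. The missing input is the canonical sheaf computation on the partial normalization $Y'$ of the cover of the blow-up $X'$: $\omega_{Y'}^{\otimes 3}\cong\mathcal O_{Y'}(\sum_i(6\tilde C_i+6\tilde C'_i+3\tilde E_i))$.
\begin{itemize}
\item After contracting these curves, $\omega^{\otimes 3}$ becomes trivial. With $q=0$ one gets a K3 surface, since Enriques surfaces have $\omega^{\otimes3}\not\cong\mathcal O$ in characteristic $3$.
\item If the logarithmic form $df/f$ has a divisorial zero $B\neq 0$, the nonzero conductor kills the only pluricanonical candidate. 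Hence $P_2=0$, and Castelnuovo gives rationality.
\end{itemize}
Your criterion that isolated zeros give a K3 is also not justified. For $B=0$ an elliptic singularity $z^3=x^2+y^7$ is not excluded, and then the resolution is rational (via Hidaka--Watanabe). This still fits (ii), but it has to be handled.
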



\begin{theorem}
\label{thm9}
\begin{enumerate}
\item[(i)]
If $n=9$, then  $X$ is supersingular of Artin invariant $\sigma\leq 5$.
\item[(ii)]
The triple cover is rational (whence $X$ is unirational).
\item[(iii)]
Conversely, any supersingular K3 surface of Artin invariant $\sigma\leq 5$
admits a $3$-divisible configuration of 9 $A_2$.
\end{enumerate}
\end{theorem}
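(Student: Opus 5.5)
Theorem \ref{thm9} has three parts, which I would treat in the order (ii), (i), (iii), since the structure of the cover does most of the work.

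\textbf{Building the cover.} Let $C_i, C_i'$ ($i=1,\dots,9$) be the curves of the nine $A_2$ configurations. By definition, $3$-divisibility gives a divisor class $L$ with
\[
3L \sim \sum_i (C_i + 2C_i').
\]
This datum defines a cyclic triple cover $\pi\colon Y \to X$ which is branched only over the configuration. In characteristic $3$ the cover is purely inseparable or of Artin--Schreier type, so it is given by a $\mu_3$- or $\alpha_3$-torsor over $X$ minus the curves, extended across them. I would compute the canonical class and Euler characteristic of the normalized and minimally resolved cover $\tilde Y$ using the local structure over each $A_2$. This structure should be the same as in the proof of Corollary \ref{6,9}, where the count $n\in\{6,9\}$ presumably came from $\chi$ and $L^2 = -2n/3$ considerations.

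\textbf{Part (ii).} For $n=9$ I expect $L^2=-6$. The genus/Riemann--Roch bookkeeping, $\chi(\mathcal O_{\tilde Y}) = 3\chi(\mathcal O_X) + (\text{correction from } L)$, should force $\chi(\mathcal O_{\tilde Y})=1$ and $K_{\tilde Y}$ non-effective. Hence $\tilde Y$ is rational. This is in contrast with $n=6$, where the K3 option survives. The cover is inseparable (Zariski-type) because the $\mu_3$-structure must appear; separable triple covers cannot be rational over a K3 in this count. Consequently $X$ is dominated by a rational surface through a purely inseparable map, so $X$ is unirational. By Shioda/Artin, unirational implies supersingular, and this gives the first half of (i).

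\textbf{Part (i).} For the bound $\sigma\le5$, I would use the $27$-dimensional-ish configuration argument. The $18$ curves together with $L$ span a sublattice $M$ of rank $18$, and this lattice is $3$-elementary. Its discriminant is computable: $A_2^9$ has discriminant group $(\bbZ/3)^9$. Adding $L$ gives an isotropic vector in the discriminant form, so the overlattice $M$ has discriminant $3^{7}$. Embedding $M$ in $\NS(X)$, whose discriminant is $-3^{2\sigma}$, I then compare ranks. We have $\rank\NS=22$, so $\NS$ is an overlattice of $M\oplus M^\perp$ with $\rank M^\perp=4$. The length of the discriminant group of $\NS$ satisfies $2\sigma \le \ell(A_M) + \ell(A_{M^\perp})$ minus twice the gluing. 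Bounding $\ell(A_{M^\perp})\le4$ gives $2\sigma \le 7+4 - 1$, i.e. $\sigma\le5$; parity of $2\sigma$ is used here. This is the step I expect to be the main obstacle. Getting the sharp constant requires the correct discriminant computation of $M$ and a careful argument that gluing cannot be avoided. It may need the actual classification of the divisibility codes (the code must be the ternary structure determined in Corollary \ref{6,9}, presumably of weight 9, hence the all-ones-type code in dimension $1$ or a Hamming-type code of dimension $2$). With a code of dimension $2$ the discriminant becomes $3^5$, giving $2\sigma\le 5+4+1$, which is consistent with $\sigma\le 5$.

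\textbf{Part (iii).} For existence, I would use the fact that supersingular K3 surfaces of Artin invariant $\sigma$ form a family in which the specialisation $\sigma\to\sigma-1$ preserves lattice embeddings. By Ogus' crystalline Torelli it is then enough to embed $M$ into the supersingular K3 lattice $\Lambda_{3,\sigma}$ for $\sigma\le5$ with the roots $C_i,C_i'$ realised as $(-2)$-curves. Concretely, I would exhibit the configuration on a quasi-elliptic fibration: quasi-elliptic fibrations in characteristic $3$ have cuspidal fibres and reducible fibres of type $\IV$ or $\IV^*$, and the $3$-torsion sections yield $3$-divisibility among the $\IV$ components. A quasi-elliptic fibration with $9$ fibres (or combinations) of type $\IV$, together with the appropriate torsion, realizes $A_2^9$. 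Nikulin-type lattice theory (existence of primitive embeddings of $M\oplus U$) then shows such fibrations exist for every $\sigma\le 5$. Finally, I would check by a vector-count argument that the remaining roots do not disturb the configuration being made of irreducible curves.
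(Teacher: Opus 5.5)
\textbf{Main gap: part (ii).} Your argument for (ii) does not work, and the supersingularity half of (i) depends on it.

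In characteristic $3$ the cover attached to $3\mathcal M\sim\sum_i(C_i+2C_i')$ is necessarily the purely inseparable cover $z^3=f$; there is no Artin--Schreier alternative. Its singularities are \emph{not} determined by the local structure over the $A_2$ configurations. $Y$ is singular over the zero locus of $df$, i.e.\ of the closed $1$-form $\eta=df/f$, which is global information on $X'$, and $Y$ is not even normal.

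Riemann--Roch on $f_*\mathcal O_Y\cong\mathcal O_{X'}\oplus\mathcal L^{-1}\oplus\mathcal L^{-2}$ only gives $\chi(\mathcal O_Y)=-\frac{8n}{3}+6=-18$ for the non-normal cover. Normalization and resolution change this by amounts you have no means of computing. Nothing in your bookkeeping distinguishes $n=9$ from $n=6$, where the cover can be K3 even though the local picture over each $A_2$ is identical. Moreover, ``$\chi(\mathcal O_{\tilde Y})=1$ and $K_{\tilde Y}$ non-effective'' does not imply rationality (think of Enriques or Godeaux surfaces); Castelnuovo's criterion needs $q=P_2=0$.

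\textbf{The missing idea.} What is needed is a global count for $\eta$. On the blow-up $X'$ one has $(\eta)=B-\sum_i(\bar C_i+\bar C_i')$ with $B\ge0$. Igusa's formula $c_2(X')=\deg\langle\eta\rangle+(\eta)\cdot K_{X'}-(\eta)^2$ then yields $\deg\langle\eta\rangle=B^2+24-3n$, so $n=9$ forces $B\neq0$. Consequently the partial normalization $Y'$ is singular along a curve, the conductor $\mathcal I$ of its normalization is nontrivial, and $\omega_{\tilde Y}\cong\mu^*(\mathcal I\otimes\omega_{Y'})$.

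The local forms $z^3=ux$ and $z^3=uy^2$ give $\omega_{Y'}^{\otimes3}\cong\mathcal O_{Y'}(\sum_i(6\tilde C_i+6\tilde C_i'+3\tilde E_i))$. This is supported on a negative definite configuration disjoint from the conductor, so $P_6(\tilde Y)=0$, hence $p_g=P_2=0$ on the resolution. Finally, $q=0$ comes from the dominant map $X'^{(1/3)}\dashrightarrow Y$.

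\textbf{Parts (i) and (iii).} Once supersingularity is known, your lattice bound in (i) is fine: $\ell(A_{\NS})\le\ell(A_M)+\ell(A_{M^\perp})\le7+4$ gives $2\sigma\le11$. No gluing correction or code classification is needed.

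For (iii) your direction is right, but you should exhibit explicit decompositions. For example, $\Lambda_5\cong U(3)\oplus M\oplus A_2$ and $\Lambda_4\cong U\oplus M\oplus A_2$, with $M$ your index-$3$ overlattice; the root lattice is $10A_2$, giving ten $\IV$ fibres, nine of them $3$-divisible via $M$. For $\sigma\le3$, use fibrations with three $\IV^*$ fibres as in Example~\ref{ex:3E_6}. What makes this apply to every surface is simply that $\NS(X)\cong\Lambda_\sigma$ depends only on $\sigma$.
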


We also discuss specific models of quasi-elliptic surfaces as quartics in $\bbP^3$
and derive some interesting consequences (cf.\ Theorems \ref{converse}, \ref{converse-6} and Corollary 
\ref{cor:converse}).

%
%

The paper is organized as follows.
The next section reviews basics on rational curves and establishes
fundamental  divisibility results.
Section \ref{s:triple} discusses more specifically triple covers,
leading to subtleties related to  normalization addressed in Section \ref{s:normal}.
The resolution of singularities is analysed in Section \ref{s:resolution},
the main result Theorem \ref{thm:B} setting a cornerstone for the proof of Theorems \ref{thm6}, \ref{thm9}.
After some explicit examples in Section \ref{s:ex},
the paper continues with the proofs of Theorem \ref{thm6} in Section \ref{s:pf6}
and, much shorter, of Theorem \ref{thm9} in Section \ref{s:pf9}.
It concludes with a general discussion of special projective models  in Section \ref{s:n=9}
with consequences  for both  the $n=6$ and the  $n=9$ case.

\begin{convention}
Throughout the paper, $k$ is an algebraically closed field of characteristic $3$
unless otherwise noted.
Rational curves are assumed to be irreducible,
and root lattices negative definite.
\end{convention}

\section{$(-2)$-curves on K3 surfaces}
\label{s:def}

Let $X$ be a K3 surface over $k$.
Let $C_i$ and $C_i'$ ($1 \leq i\leq n$) be nonsingular rational curves
with $C_i\cdot C'_i = 1$. We assume that the unions $C_i\cup C'_i$ do not intersect
each other. 
We assume that 
the divisor 
$$\sum_{i=1}^{n}(a_iC_i + a'_iC'_i) \;\;\;
(a_i, a'_i\in\bbZ; \; 0\leq a_i \leq 2, 0\leq a'_i \leq 2)
$$
is divisible by 3 in ${\rm Pic}(X)$.

Then there exists an invertible sheaf ${\mathcal M}$ such that
$$
    {\mathcal M}^{\otimes 3}\cong {\mathcal O}_X\left(\sum_{i=1}^{n}(a_iC_i + a'_iC'_i)\right).
$$
Since ${\Pic}(X)$ is torsion-free, ${\mathcal M}$ is unique up to isomorphism.
In this section we determine the possibilities for $a_i$, $a'_i$.
Since the case $a_i =a'_i = 0$ is superfluous, we exclude it in the sequel
so that $n$ is determined by the $a_i, a_i'$.

Since $C_i$ and $C'_i$ are nonsingular rational curves on a K3 surface, we have 
$C_i^2= -2$ and $C'^2_i= -2$.
We set ${\mathcal M}\cdot C_i = m_i$ and ${\mathcal M}\cdot C'_i = m'_i$. Then, we have
$$
\left\{
\begin{array}{cl}
    -2a_i + a'_i = 3m_i, \\
    a_i -2a'_i = 3m'_i.
\end{array}
\right.
$$
By $0\leq a_i \leq 2$, $0\leq a'_i \leq 2$ and $(a_i, a'_i)\neq (0, 0)$, 
we see either $(a_i, a'_i) = (1, 2)$ or $(2, 1)$.
Without loss of generality, we may assume $(a_i, a'_i) = (1, 2)$ for any $i = 1, 2, \ldots, n$;
this translates as
\begin{eqnarray}
\label{eq:3M}
   {\mathcal M}^{\otimes 3} \cong {\mathcal O}_X\left(\sum_{i=1}^{n}(C_i + 2C'_i)\right).
\end{eqnarray}
We compute the self-intersection number
\begin{equation}\label{NM}
{\mathcal M}\cdot {\mathcal M} = -\frac{2n}{3}.
\end{equation}

\begin{lemma}\label{3,6,9}
     $n =3$, $6$ or $9$.
\end{lemma}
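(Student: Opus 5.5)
The plan combines two facts about the Néron--Severi lattice of a K3 surface: it is even, and its rank is at most $22$. The first forces $3 \mid n$, the second bounds $n$ from above, and together they leave only $n = 3, 6, 9$.

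\emph{Divisibility by $3$.} By Riemann--Roch on the K3 surface $X$, every invertible sheaf $\mathcal L$ satisfies $\chi(\mathcal L) = 2 + \tfrac12\,\mathcal L\cdot\mathcal L$. Equivalently, by adjunction with $K_X = 0$, the intersection form on $\Pic(X)$ is even, so $\mathcal L\cdot\mathcal L \in 2\bbZ$. Applying this to $\mathcal M$ and using \eqref{NM}, we get that $-\tfrac{2n}{3}$ is an even integer. Hence $\tfrac{n}{3}\in\bbZ$, i.e.\ $3 \mid n$. Since we have excluded the trivial case, $n \geq 1$, and therefore $n \geq 3$.

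\emph{Upper bound.} The $2n$ curves $C_i, C_i'$ have intersection matrix equal to that of the root lattice $A_2^n$, since the blocks $C_i\cup C_i'$ are pairwise disjoint. This matrix is negative definite, hence nondegenerate. It follows that the classes of $C_i, C_i'$ are linearly independent in $\NS(X)$, so $2n \leq \rho(X)$.

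In any characteristic, Igusa's inequality gives $\rho(X) \leq b_2(X) = 22$. Thus $2n \leq 22$, i.e.\ $n \leq 11$. Combined with $3 \mid n$ and $n \geq 3$, this yields $n \in \{3, 6, 9\}$.

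I do not expect a real obstacle. The only points needing care are that the parity argument uses the evenness of the lattice rather than mere integrality, and that the rank bound $\rho \leq 22$ holds in characteristic $3$ (in fact $\rho \leq 22$ for supersingular surfaces and $\rho\leq 20$ otherwise, but $22$ suffices here).
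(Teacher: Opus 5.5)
Your proof is correct and follows the paper's argument: $A_2^n\subseteq \NS(X)$ gives $n\le 11$, and integrality of $\mathcal M\cdot\mathcal M=-\frac{2n}{3}$ forces $3\mid n$. The paper additionally uses the Hodge index theorem to exclude $n=11$, which is redundant since $11$ is not a multiple of $3$; likewise, integrality of $-\frac{2n}{3}$ alone already gives $3\mid n$ because $\gcd(2,3)=1$, so the evenness you flag as the delicate point is not actually needed.
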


\begin{proof}
The rank of N\'eron-Severi group ${\rm NS}(X)$ is smaller than or equal to 22.
The curves $C_i, C_i'$ generate a sublattice $A_2^n\subseteq\NS(X)$, whence $n\leq 11$.
If $n = 11$, ${\rm NS}(X)$ would thus be negative definite, 
but it is hyperbolic by the Hodge index theorem.
Therefore  $n \leq 10$. Since ${\mathcal M}^2$ is an (even) integer, 
(\ref{NM}) shows that
$n$ is divisible by 3.
The result follows from these facts.
\end{proof}

 \begin{lemma}\label{M}
$\dim {\rm H}^0(X, {\mathcal M}) =\dim {\rm H}^2(X, {\mathcal M}) =0$ and
$\dim {\rm H}^1(X, {\mathcal M}) = \frac{n}{3} -2$.
\end{lemma}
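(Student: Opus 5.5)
We plan to compute $\chi(\mathcal M)$ by Riemann--Roch, show that $H^0$ and $H^2$ vanish, and read off $h^1$. On a K3 surface Riemann--Roch gives $\chi(X,\mathcal M)=\frac{\mathcal M^2}{2}+2$. By (\ref{NM}) this equals $-\frac{n}{3}+2$. Once $h^0(\mathcal M)=h^2(\mathcal M)=0$ is established, we get $h^1(\mathcal M)=-\chi(\mathcal M)=\frac n3-2$, as claimed. Consequently Lemma \ref{3,6,9} improves: $n=3$ is impossible, which is presumably Corollary \ref{6,9}.

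For $H^2$, Serre duality with $K_X\cong\mathcal O_X$ gives $h^2(\mathcal M)=h^0(\mathcal M^{-1})$. Suppose first that $\mathcal M$ has a nonzero section, i.e.\ is represented by an effective divisor $D$. Then $3D$ is effective and linearly equivalent to $B:=\sum_i(C_i+2C_i')$. Since the $C_i,C_i'$ are $(-2)$-curves spanning the negative definite lattice $A_2^n$, the divisor $B$ is rigid: $h^0(\mathcal O_X(B))=1$. Indeed, if $|B|$ moved, its fixed part would be supported on the $C_i,C_i'$ and its mobile part $F$ would satisfy $F^2\ge 0$, while $F$ would be a nonzero combination of the $C_i,C_i'$, contradicting negative definiteness. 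Hence $3D=B$ as divisors. This is impossible because $C_i$ appears in $B$ with coefficient $1$, which is not divisible by $3$.

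If instead $\mathcal M^{-1}$ had a section, say $E\in|\mathcal M^{-1}|$, then $3E+B\sim 0$ with $3E+B$ effective and nonzero, since $B\neq 0$. This contradicts $h^0(\mathcal O_X)=1$ on a projective surface: a nonzero effective divisor cannot be linearly equivalent to $0$, as one sees by intersecting with an ample class. So $h^0(\mathcal M^{-1})=0$.

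The only step needing some care is the rigidity of $B$. Alternatively one can avoid it: $3D\sim B$ gives $D\cdot C_i=m_i$, and a direct decomposition $D=D_0+\sum(x_iC_i+y_iC_i')$ leads to the same coefficient contradiction. The rigidity argument via negative definiteness is standard, so I expect no real obstacle.
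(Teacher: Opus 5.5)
Your proof is correct. The Riemann--Roch step and the vanishing of $H^2$ are the same as in the paper, which argues that $\mathcal M^{-3}$ is the negative of a nonzero effective divisor and so $H^0(\mathcal M^{-1})=0$. The difference lies in how you show $h^0(\mathcal M)=0$.

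The paper peels curves off an effective $D\in|\mathcal M|$. The negative intersection numbers $D\cdot C_i'=-1$ and $(D-\sum_j C_j')\cdot C_i=-1$ force $D-\sum C_i'$ and $D-\sum(C_i+C_i')$ to be effective. Adding these to $D$ gives a nonzero effective divisor linearly equivalent to $0$, a contradiction.

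You argue instead that $B=\sum(C_i+2C_i')$ is the only member of $|B|$. Its fixed part is $\le B$, and a mobile part would be a nonzero class in the negative definite lattice $A_2^n$ with nonnegative square. Hence $3D=B$ as divisors, which contradicts the coefficient $1$ of $C_i$.

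The paper's route is entirely elementary: it only uses that an irreducible curve meeting an effective divisor negatively is one of its components, together with the specific values $\mathcal M\cdot C_i=0$ and $\mathcal M\cdot C_i'=-1$. Your route needs the fixed/mobile decomposition and negative definiteness, but it does not depend on the particular coefficients. It applies verbatim whenever one asks for a cube root of an effective divisor supported on a negative definite configuration that is not $3$ times a divisor. For example, it also handles $\mathcal L$ and $\mathcal L^{\otimes 2}$ on $X'$ in Lemma \ref{L}, where the disjoint $(-3)$-curves $\bar C_i,\bar C_i'$ play the role of the $A_2^n$ configuration.
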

\begin{proof}
If ${\rm H}^0(X, {\mathcal M})\neq 0$, then there exists an effective divisor $D$ 
such that ${\mathcal M} \sim D$.
Then, we have $3D \sim {\mathcal M}^{\otimes 3} \sim \sum_{i=1}^{n}(C_i + 2C'_i)$. 
By $3D \cdot C'_i = -3 <0$ we see $C'_i \subset D$. 
Therefore, $D-\sum_{i=1}^nC'_i$ is an effective divisor. We denote it by $D'$.
By $(D + 2D')\cdot C_i = -2 <0$, we see $C_i \subset D' \subset D$.
Thus we obtain another effective divisor $D-\sum_{i=1}^n(C_i + C'_i)$.
In consequence, the non-zero divisor 
$D +(D-\sum_{i=1}^nC'_i) + (D-\sum_{i=1}^n(C_i + C'_i))$ is 
at the same time effective and, by spelling out $3D$, 
linearly equivalent to $0$,
which is impossible. Hence, 
we deduce ${\rm H}^0(X, {\mathcal M})= 0$.

By the Serre duality theorem, we get 
${\rm H}^2(X, {\mathcal M}) \cong {\rm H}^0(X, {\mathcal M}^{-1})$. 
Since ${\mathcal M}^{-3}\sim -3D<0$,
we have  $0 = {\rm H}^0(X, {\mathcal M}^{-1}) = {\rm H}^2(X, {\mathcal M})= 0$.
By the Riemann-Roch theorem, we deduce
$$
 -\dim {\rm H}^1(X, {\mathcal M}) = \chi({\mathcal M}) =\frac{{\mathcal M}\cdot {\mathcal M}}{2} + 2.
$$
Therefore, we conclude $\dim {\rm H}^1(X, {\mathcal M}) = \frac{n}{3} -2$.
\end{proof}

\begin{corollary}\label{6,9}
$n = 6$ or  $9$.
\end{corollary}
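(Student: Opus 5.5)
The corollary follows by combining Lemma \ref{3,6,9} with Lemma \ref{M}. By Lemma \ref{3,6,9} we already know $n\in\{3,6,9\}$, so the plan is simply to exclude $n=3$.

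Suppose $n=3$. Lemma \ref{M} gives $\dim {\rm H}^1(X,{\mathcal M}) = \frac{n}{3}-2 = -1$. A dimension of a vector space cannot be negative, so this is a contradiction. Equivalently, Riemann--Roch gives $\chi({\mathcal M}) = \frac{{\mathcal M}\cdot{\mathcal M}}{2}+2 = -1+2 = 1$ by (\ref{NM}). Since ${\rm H}^0$ and ${\rm H}^2$ vanish by Lemma \ref{M}, we have $\chi({\mathcal M}) = -\dim {\rm H}^1(X,{\mathcal M})\le 0$, which is incompatible with $\chi({\mathcal M})=1$. Hence $n\neq 3$, and therefore $n=6$ or $n=9$.

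There is no real obstacle here. The substantive input is the vanishing of ${\rm H}^0(X,{\mathcal M})$ and ${\rm H}^2(X,{\mathcal M})$, which Lemma \ref{M} already provides. What remains is only the sign check that $\frac{n}{3}-2<0$ exactly when $n=3$ among the admissible values. For $n=6$ and $n=9$ the formula gives $\dim {\rm H}^1 = 0$ and $1$ respectively, both consistent, so no further values are excluded.
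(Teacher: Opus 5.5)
Your proposal is correct and matches the paper's proof: Lemma \ref{M} gives $\frac{n}{3}-2=\dim {\rm H}^1(X,{\mathcal M})\geq 0$, so $n\geq 6$, and Lemma \ref{3,6,9} then leaves only $n=6$ or $9$. Your Riemann--Roch version, with $\chi({\mathcal M})=1$ against $\chi({\mathcal M})=-\dim {\rm H}^1(X,{\mathcal M})\leq 0$, is the same computation written out.
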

\begin{proof}
By Lemma \ref{M}, we have $\dim {\rm H}^1(X, {\mathcal M}) = \frac{n}{3} -2\geq 0$.
Therefore, we have $n \geq 6$ and the result follows from Lemma \ref{3,6,9}.
\end{proof}

\begin{remark}
The corollary can also be derived from the results about divisibilities in root lattices 
in \cite{S-nodal}.
It should thus come as no surprise that it is the same in any characteristic.
The geometric consequences, however, are rather different,
as recorded in Theorems \ref{thm6}, \ref{thm9}.
\end{remark}

\section{Triple covering}
\label{s:triple}

We denote by $P_i$ the intersection point of $C_i$ and $C'_i$ ($i= 1, 2, \hdots, n$).
We blow-up $X$ at $P_i$'s ($i= 1, 2, \hdots, n$) and denote the blowing-up by $\varphi$ and 
the obtained surface by $X'$:
$$
    \varphi : X' \longrightarrow X.
$$
Let $E_i = \varphi^{-1}(P_i)$ be the exceptional curve and $\bar{C}_i$ (resp. $\bar{C'_i}$)
be the proper transform of the curve $C_i$ (resp. $C'_i$), see Figure \ref{blowing-up}. 
They are nonsingular rational curves
and we have
$$
{\bar{C}_i}^2 = \bar{C'_i}^2 = -3, {E_i}^2 = -1.
$$
The dualizing sheaf $\omega_{X'}$ of $X'$ is given by
\begin{equation}\label{dualizing1}
    \omega_{X'} \cong \mathcal{O}_{X'}\left(\sum_{i=1}^{n}E_i\right).
\end{equation}
We have
$$
\varphi^{*}\left(\sum_{i=1}^{n}({C}_i + 2{C'_i})\right)= \sum_{i=1}^{n}(\bar{C}_i + 2\bar{C'_i} + 3E_i).
$$
Since $\sum_{i=1}^{n}(\bar{C}_i + 2\bar{C'_i} + 3E_i)$
is divisible by $3$ in ${\rm Pic}(X')$, $\sum_{i=1}^{n}(\bar{C}_i + 2\bar{C'_i})$ is also divisible by
$3$ in ${\rm Pic}(X')$, that is, there exists an invertible sheaf ${\mathcal L}$  on $X'$
such that
$$
    3{\mathcal L} \sim \sum_{i=1}^{n}(\bar{C}_i + 2\bar{C'_i}).
$$
We compute
\begin{equation}\label{L^2}
    {\mathcal L}\cdot {\mathcal L}= -\frac{5n}{3}.
\end{equation}

Let $\{U_i = {\rm Spec}(A_i)\}$ be an affine open covering of $X'$, 
and with respect to the covering $\{U_i\}$ let $\{f_{ij}\}$ be a cocycle  
which defines ${\mathcal L}$. Let $f_i =0$ be a local equation which defines
the effective divisor $\sum_{i=1}^{n}(\bar{C}_i + 2\bar{C'_i})$ on $U_i$. 
After adjusting the local equations $f_i$, we may assume that
\begin{equation}\label{1}
       f_{ij}^3 = f_i/f_j \quad \mbox{on}~U_i\cap U_j.
\end{equation}
Let $\tilde{U}_i \longrightarrow U_i$ be the covering defined by
\begin{equation}\label{2}
     z_i^3 = f_i.
\end{equation}
Then patching together by
$$
       z_i = f_{ij}z_j,
$$
we derive a purely inseparable covering of degree 3:
$$
    f :  Y \longrightarrow X'
$$
where $Y$ is projective, but not nonsingular. In fact, as we shall explore below, $Y$ is not even normal.

Since $f$ is locally given by 
$$
A_i \hookrightarrow A_i[z_i]/(z_i^3 - f_i) \cong A_i \oplus A_iz_i \oplus A_i{z_i}^2, 
$$
we see that
$$
  f_{*}({\mathcal O}_Y) \cong {\mathcal O}_X \oplus {\mathcal L}^{-1} \oplus {\mathcal L}^{-2}.
$$

Let $x_i$ (resp. $y_i$, resp. $w_i$) be a defining equation of $\bar{C}_i$ (resp. $\bar{C'_i}$,
resp. ${E}_i$). Since $\bar{C}_i$ (resp. $\bar{C'_i}$,
resp. ${E}_i$) is a nonsingular curve, we can take $x_i$ (resp. $y_i$, resp. $w_i$) as 
a part of a system of local parameters at each point on $\bar{C}_i$ (resp. $\bar{C'_i}$,
resp. ${E}_i$). Let $Q_i$ (resp. $R_i$) the intersection point of $\bar{C}_i$ and
${E}_i$ (resp. $\bar{C'_i}$ and ${E}_i$). 

\begin{figure}[htbp]
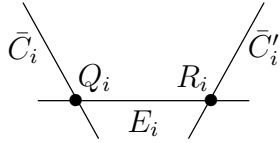

\xy
(-20,10)*{};
(40,18)*{};(50,0)*{}**\dir{-};
(62,0)*{};(72,18)*{}**\dir{-};
(42,5)*{};(70,5)*{}**\dir{-};
(56,2)*{E_i};
(40,12)*{\bar{C}_i};
(72,12)*{\bar{C}'_i};
(49.5,8)*{Q_i};
(62.5,8)*{R_i};
(47,5)*{\bullet};
(65,5)*{\bullet};
\endxy 
\caption{Blow-up at $P_i$}
\label{blowing-up}
\end{figure}
Then, $x_i$ and $w_i$ (resp. $y_i$ and $w_i$)
form a system of local parameters at $Q_i$ (resp. $R_i$). The defining equation $f_i$  of 
the effective divisor $\sum_{i=1}^{n}(\bar{C}_i + 2\bar{C'_i})$ at $Q_i$ (resp. $R_i$)
is given by 
$$f_i = u_i x_i \;\;\;  (\text{resp.}  \; f_i = u_i y_i^2)
$$
with a unit $u_i$ on $U_i$.
After shrinking $U_i$, if necessary, 
we may thus assume that the $U_i$ form an affine open covering such that $U_i$ 
contains $Q_i$ (resp. $R_i$), and 
$(x_i, w_i)$ (resp. $(y_i, w_i)$) are local parameters on $U_i$.

\begin{lemma}\label{L}
$\dim {\rm H}^0(X', {\mathcal L}^{\ell}) =\dim {\rm H}^2(X', {\mathcal L}^{\ell}) =0$,
$\dim {\rm H}^1(X', {\mathcal L}^{\ell}) = \frac{5\ell^2 n+ 3\ell n}{6} -2$ $(\ell = 1, 2)$.
\end{lemma}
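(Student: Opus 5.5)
The plan is to compute $\chi({\mathcal L}^{\ell})$ by Riemann--Roch on $X'$, and then to prove the vanishing of ${\rm H}^0$ and ${\rm H}^2$ by comparing ${\mathcal L}$ with $\varphi^*{\mathcal M}$ and reducing to statements on $X$ of the type in Lemma \ref{M}.

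\emph{The comparison with ${\mathcal M}$.} From $\varphi^*\bigl(\sum(C_i+2C'_i)\bigr)=\sum(\bar C_i+2\bar{C'_i}+3E_i)$ we get $3\varphi^*{\mathcal M}\sim 3({\mathcal L}+\sum E_i)$. Since $\Pic(X')$ is torsion free, this gives
$$
{\mathcal L}\cong\varphi^*{\mathcal M}\otimes{\mathcal O}_{X'}\Bigl(-\sum_{i=1}^n E_i\Bigr).
$$

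\emph{Riemann--Roch.} We have $\chi({\mathcal O}_{X'})=\chi({\mathcal O}_X)=2$ and $K_{X'}=\sum E_i$ by (\ref{dualizing1}). From $3{\mathcal L}\cdot E_i=(\bar C_i+2\bar{C'_i})\cdot E_i=3$ we get ${\mathcal L}\cdot E_i=1$, so ${\mathcal L}^{\ell}\cdot K_{X'}=\ell n$. Using (\ref{L^2}),
$$
\chi({\mathcal L}^{\ell})=2+\tfrac12\bigl(\ell^2{\mathcal L}^2-\ell n\bigr)=2-\frac{5\ell^2n+3\ell n}{6}.
$$
Hence the formula for $h^1$ follows once $h^0=h^2=0$ is established.

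\emph{Vanishing of ${\rm H}^2$.} By Serre duality, ${\rm H}^2(X',{\mathcal L}^{\ell})\cong{\rm H}^0\bigl(X',{\mathcal O}(\sum E_i)\otimes{\mathcal L}^{-\ell}\bigr)$. Substituting the comparison, this sheaf is $\varphi^*{\mathcal M}^{-\ell}\otimes{\mathcal O}\bigl((\ell+1)\sum E_i\bigr)$. Since $\varphi_*{\mathcal O}_{X'}(kE_i)={\mathcal O}_X$ for $k\ge0$, its global sections equal ${\rm H}^0(X,{\mathcal M}^{-\ell})$. This group vanishes because $({\mathcal M}^{-\ell})^{\otimes 3}$ is the negative of the nonzero effective divisor $\ell\sum(C_i+2C'_i)$.

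\emph{Vanishing of ${\rm H}^0$.} The inclusion ${\rm H}^0(X',{\mathcal L}^{\ell})\subseteq{\rm H}^0(X',\varphi^*{\mathcal M}^{\ell})={\rm H}^0(X,{\mathcal M}^{\ell})$ reduces the question to $X$. For $\ell=1$ this is Lemma \ref{M}. For $\ell=2$, which I expect to be the only step needing a new argument, I will use the peeling trick from the proof of Lemma \ref{M}. Note that ${\mathcal M}^{2}\cong{\mathcal M}^{-1}\otimes{\mathcal O}_X\bigl(\sum(C_i+2C'_i)\bigr)$. From the equations for $m_i,m'_i$ with $(a_i,a'_i)=(1,2)$ we get $m_i=0$ and $m'_i=-1$.

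Suppose $D\sim{\mathcal M}^2$ is effective, and proceed in three peeling steps.
\begin{enumerate}
\item[(a)] Since $D\cdot C'_i=-2<0$, we get $C'_i\subseteq D$ for every $i$.
\item[(b)] Set $D_1=D-\sum C'_i$. Then $D_1\cdot C_i=0-1=-1<0$, so $C_i\subseteq D_1$.
\item[(c)] Set $D_2=D_1-\sum C_i$. Then $D_2\cdot C'_i=-2+2-1=-1<0$, so $C'_i\subseteq D_2$.
\end{enumerate}
Therefore $D-\sum(C_i+2C'_i)$ is effective and linearly equivalent to ${\mathcal M}^{-1}$. This contradicts ${\rm H}^0(X,{\mathcal M}^{-1})=0$, which was shown in Lemma \ref{M}. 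Hence $h^0({\mathcal L}^{\ell})=0$ for $\ell=1,2$, and the stated value of $h^1$ follows from the Riemann--Roch computation.
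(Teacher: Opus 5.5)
Your proof is correct, but it is organized differently from the paper's. You make the identification $\mathcal{L}\cong\varphi^*\mathcal{M}\otimes\mathcal{O}_{X'}(-\sum E_i)$ explicit and push both vanishing statements down to $X$:
\begin{itemize}
\item the $H^2$-vanishing becomes $H^0(X,\mathcal{M}^{-\ell})=0$, via $\varphi_*\mathcal{O}_{X'}(kE_i)=\mathcal{O}_X$;
\item the $H^0$-vanishing becomes $H^0(X,\mathcal{M}^{\ell})=0$, so $\ell=1$ is literally Lemma \ref{M}, and $\ell=2$ needs your three-step peeling on $X$ (where $C_i$ and $C'_i$ still meet), ending in an effective member of $|\mathcal{M}^{-1}|$.
\end{itemize}

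The paper never brings $\mathcal{M}$ into this proof; it stays on $X'$. There $\bar C_i$ and $\bar C'_i$ are disjoint, with $\mathcal{L}\cdot\bar C_i=-1$ and $\mathcal{L}\cdot\bar C'_i=-2$. One peeling step for each set of curves then yields $2D'+D''>0$ with $2D'+D''\sim 0$. The case $\ell=2$ is only asserted to go ``in a similar way''. For $H^2$, the paper combines Serre duality with a cube trick: $H^0(X',\mathcal{O}(3\sum E_i))=k$ forces $H^0\bigl(X',\mathcal{O}(3\sum E_i-\ell\sum(\bar C_i+2\bar C'_i))\bigr)=0$. This is the same input as your pushforward statement, in a different guise.

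Your route reuses Lemma \ref{M} directly and actually verifies the $\ell=2$ case that the paper leaves to the reader. The paper's route is self-contained on $X'$, needs no comparison with $\varphi^*\mathcal{M}$, and benefits from the curves being disjoint after the blow-up.
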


\begin{proof}
If ${\rm H}^0(X', {\mathcal L})\neq 0$, then there exists an effective divisor $D$ 
such that ${\mathcal L}\sim D$.
Then, we have 
$$3D \sim {\mathcal L}^{\otimes 3} \sim \sum_{i=1}^{n}(\bar{C}_i + 2\bar{C'_i}).
$$
By $D \cdot \bar{C'_i} = -2 <0$, 
we see $\bar{C'_i} \subset D$. 
Therefore, $D'=D-\sum_{i=1}^n\bar{C'_i}$ is a (non-zero) effective divisor. 
Similarly, $D \cdot \bar{C_i} = -1 <0$ implies that $D''=D-\sum_{i=1}^n\bar{C_i}>0$.
%
In consequence, $2D' +D''>0$,
but by spelling this divisor  out, it is seen to be linearly equivalent to $0$, which is impossible.
Therefore, we have ${\rm H}^0(X', {\mathcal L})= 0$. 
In a similar way, we find ${\rm H}^0(X', {\mathcal L}^{\otimes 2})= 0$. 
Serre duality gives
$$
{\rm H}^2(X', {\mathcal L}^{\otimes \ell}) \cong {\rm H}^0(X', {\mathcal O}_{X'}(\sum_{i=1}^nE_i)\otimes{\mathcal L}^{-\ell}).
$$
Since ${\rm H}^0(X', {\mathcal O}_{X'}(3\sum_{i=1}^nE_i))\cong k$ (constants), we see
$$
{\rm H}^0(X', {\mathcal O}_{X'}(3\sum_{i=1}^nE_i)\otimes {\mathcal L}^{-3\ell})
\cong {\rm H}^0(X', {\mathcal O}_{X'}(3\sum_{i=1}^nE_i)\otimes 
{\mathcal O}_{X'}(-\ell\sum_{i=1}^{n}({\bar{C}_i + 2\bar{C'_i}})))= 0.
$$
Therefore, we obtain 
${\rm H}^0(X', {\mathcal O}_{X'}(\sum_{i=1}^nE_i)\otimes {\mathcal L}^{-\ell}) = 0$ 
 and
${\rm H}^2(X', {\mathcal L}^{-\ell})= 0$ $(\ell = 1, 2)$.
By the Riemann-Roch theorem, we compute
$$
  -\dim {\rm H}^1(X', {\mathcal L}^{\otimes \ell}) = \chi({\mathcal L}^{\otimes \ell}) 
  =\frac{{\mathcal L}^{\otimes \ell}\cdot ({\mathcal L}^{\otimes \ell} - \omega_{X'})}{2} + 2.
$$
Hence, we conclude $\dim {\rm H}^1(X', {\mathcal L}^{\otimes \ell}) = \frac{5\ell^2 n+ 3\ell n}{6} -2$.
\end{proof}
  
\begin{lemma}\label{L-1}
$\chi({\mathcal L}^{-\ell}) = \frac{- 5\ell^2 n+ 3\ell n}{6} +2$ and
$\dim {\rm H}^0(X', {\mathcal L}^{-\ell}) =\dim {\rm H}^2(X', {\mathcal L}^{-\ell}) =0$ ,
$\dim {\rm H}^1(X', {\mathcal L}^{-\ell}) = \frac{5\ell^2 n- 3\ell n}{6} -2$ $(\ell= 1, 2)$.
\end{lemma}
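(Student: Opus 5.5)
Our approach mirrors the proof of Lemma \ref{L}: Riemann--Roch computes the Euler characteristic, and the vanishing of $\mathrm{H}^0$ and $\mathrm{H}^2$ comes from effectivity arguments, using $3\mathcal{L} \sim \sum_{i=1}^n(\bar{C}_i+2\bar{C'_i})$ and $\omega_{X'}\cong\mathcal{O}_{X'}(\sum E_i)$.

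\emph{Euler characteristic.} Since $X'$ is a blow-up of a K3 surface, $\chi(\mathcal{O}_{X'})=2$. Riemann--Roch gives
$$\chi(\mathcal{L}^{-\ell}) = \frac{\mathcal{L}^{-\ell}\cdot(\mathcal{L}^{-\ell}-\omega_{X'})}{2}+2 = \frac{\ell^2\mathcal{L}^2 + \ell\,\mathcal{L}\cdot\sum_i E_i}{2}+2 .$$
By (\ref{L^2}), $\mathcal{L}^2=-5n/3$. Moreover $3\mathcal{L}\cdot E_i = (\bar{C}_i+2\bar{C'_i})\cdot E_i = 3$, so $\mathcal{L}\cdot\sum_i E_i = n$. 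Hence
$$\chi(\mathcal{L}^{-\ell}) = \frac{-5\ell^2 n+3\ell n}{6}+2 ,$$
as claimed. As a check, this agrees with Serre duality applied to the formula in Lemma \ref{L}.

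\emph{Vanishing of $\mathrm{H}^0$.} Suppose $\mathrm{H}^0(X',\mathcal{L}^{-\ell})\neq 0$, so $-\ell\mathcal{L}\sim D\ge 0$. Then $3D\sim -\ell\sum_i(\bar{C}_i+2\bar{C'_i})$, so $3D+\ell\sum_i(\bar{C}_i+2\bar{C'_i})$ is a nonzero effective divisor linearly equivalent to $0$. This is impossible on a projective surface, as one sees by intersecting with an ample class.

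\emph{Vanishing of $\mathrm{H}^2$.} By Serre duality, $\mathrm{H}^2(X',\mathcal{L}^{-\ell})\cong \mathrm{H}^0(X',\mathcal{O}_{X'}(\sum E_i)\otimes\mathcal{L}^{\ell})$. If a nonzero section existed, its cube would give a nonzero section of
$$\mathcal{O}_{X'}\Bigl(3\sum_i E_i+\ell\sum_i(\bar{C}_i+2\bar{C'_i})\Bigr)\cong \varphi^*\mathcal{O}_X\Bigl(\ell\sum_i(C_i+2C'_i)\Bigr) = \varphi^*\mathcal{M}^{\otimes 3\ell}.$$
So it suffices to show that the corresponding divisor $G$, which lies in $|\varphi^*\mathcal{M}^{\ell}\otimes \mathcal{O}(\ldots)|$, leads to a contradiction. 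Concretely, the section would yield an effective divisor $D\sim \sum E_i+\ell\mathcal{L}$. We have
$$3D \sim \varphi^*\Bigl(\ell\sum_i(C_i+2C'_i)\Bigr),$$
so $\varphi_*D$ is an effective divisor $D_0$ on $X$ with $3D_0\sim \ell\sum_i(C_i+2C'_i)$, that is, $D_0\sim \mathcal{M}^{\ell}$ because $\Pic(X)$ is torsion-free. For $\ell=1$ this contradicts $\mathrm{H}^0(X,\mathcal{M})=0$ from Lemma \ref{M}. For $\ell=2$ we need $\mathrm{H}^0(X,\mathcal{M}^2)=0$. This is the main step to verify, and we prove it by the same peeling argument as in Lemma \ref{M}. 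Note first that $\mathcal{M}^2\cong\mathcal{M}^{-1}\otimes\mathcal{O}_X(\sum_i(C_i+2C'_i))$. Take an effective $D_0\sim\mathcal{M}^2$. Since $D_0\cdot C_i = 2m_i = 0$ and $D_0\cdot C'_i = -2$, the divisor $D_0$ contains $C'_i$. Then $(D_0-\sum_i C'_i)\cdot C_i = -1$, so it also contains $C_i$, and so on, peeling off curves until the residual divisor is $\sim \mathcal{M}^{-1}$ plus something negative. Alternatively we can combine the pieces as in Lemma \ref{M} to get a nonzero effective divisor linearly equivalent to $0$. If the peeling becomes messy, the fallback is to note that $\mathcal{M}^2\cdot(\text{ample})$ together with $\mathcal{M}^{6}\sim 2\sum_i(C_i+2C'_i)$ forces $D_0$ to be supported on the $C_i, C'_i$; then the negative definiteness of $A_2^n$ and $D_0^2 = -8n/3$ give a contradiction.

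\emph{Conclusion.} With $\mathrm{H}^0=\mathrm{H}^2=0$, we get $\dim\mathrm{H}^1(X',\mathcal{L}^{-\ell}) = -\chi(\mathcal{L}^{-\ell}) = \frac{5\ell^2n-3\ell n}{6}-2$.
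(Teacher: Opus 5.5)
Your argument is correct, and it reaches the $\mathrm{H}^2$ vanishing by a different route from the paper's; the Euler characteristic and the $\mathrm{H}^0$ vanishing are handled exactly as in the paper.

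\textbf{How the routes differ.} For $\mathrm{H}^2$, the paper only asserts that $\mathrm{H}^0(X',\mathcal L^{\ell}\otimes\mathcal O_{X'}(\sum E_i))=0$ holds ``in a similar way to the proof of Lemma \ref{L}''. The cube $\mathcal O_{X'}(3\sum E_i+\ell\sum(\bar C_i+2\bar C'_i))$ is effective, so the cubing trick used for $\mathrm{H}^2$ in Lemma \ref{L} gives nothing here. The paper's assertion must therefore mean a peeling argument on $X'$: successively remove $\bar C'_i$, $\bar C_i$, $E_i$, $\bar C'_i$ until you reach $\mathcal L^{-1}$ or a negative multiple of the configuration.

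You instead push the putative effective divisor $D\sim\sum E_i+\ell\mathcal L$ down to $X$ and reduce to $\mathrm{H}^0(X,\mathcal M^{\ell})=0$. This gains two things: $\ell=1$ becomes literally Lemma \ref{M}, and for $\ell=2$ the peeling takes place on the K3 surface, without the exceptional curves. The paper's route stays on $X'$ and treats both values of $\ell$ uniformly, but needs longer bookkeeping, which it leaves unwritten.

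\textbf{Points to fix.}
\begin{itemize}
\item \emph{The displayed isomorphism.} Both $\mathcal O_{X'}(3\sum E_i+\ell\sum(\bar C_i+2\bar C'_i))\cong\varphi^*\mathcal M^{\otimes 3\ell}$ and $3D\sim\varphi^*(\ell\sum(C_i+2C'_i))$ hold only for $\ell=1$. For $\ell=2$ you are off by $3\sum E_i$. This is harmless: since $\varphi_*E_i=0$, you still get $3\varphi_*D\sim\ell\sum(C_i+2C'_i)$, hence $\varphi_*D\sim\mathcal M^{\ell}$ by torsion-freeness.
\item \emph{The $\ell=2$ peeling.} It closes after exactly one more step than you wrote. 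With $D_2=D_0-\sum(C_i+C'_i)$ you have $D_2\cdot C'_i=-2-1+2=-1$. So $D_3=D_0-\sum(C_i+2C'_i)$ is effective and represents $\mathcal M^{\otimes 2}\otimes\mathcal M^{\otimes -3}=\mathcal M^{-1}$. This contradicts $\mathrm{H}^0(X,\mathcal M^{-1})=0$ from Lemma \ref{M}. Nothing ``negative'' is left over.
\item \emph{Your fallback does not close as stated.} The value $D_0^2=-8n/3\in\{-16,-24\}$ is even and negative, so it is compatible with $A_2^n$ being negative definite. The ample class also plays no role. What works is rigidity: $2\sum(C_i+2C'_i)$ is supported on a negative definite configuration, so it is the only member of its linear system. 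Hence $3D_0=2\sum(C_i+2C'_i)$ as divisors, which is impossible for an integral divisor $D_0$.
\item \emph{The consistency check.} The agreement with Lemma \ref{L} is just the Riemann--Roch polynomial evaluated at $-\ell$. It is not Serre duality, which pairs $\mathcal L^{-\ell}$ with $\omega_{X'}\otimes\mathcal L^{\ell}$, not with $\mathcal L^{\ell}$.
\end{itemize}
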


\begin{proof}
Since ${\rm H}^0(X', {\mathcal L}^{-3\ell})= 0$,
we have ${\rm H}^0(X', {\mathcal L}^{-\ell})= 0$ $(\ell = 1, 2)$. 
By the Serre duality
we have ${\rm H}^2(X', {\mathcal L}^{-\ell})
= {\rm H}^0(X', {\mathcal L}^{\otimes \ell}\otimes \mathcal{O}_{X'}(\sum_{i=1}^{n}E_i))$. 
In a similar way to the proof of Lemma \ref{L}, we see 
${\rm H}^0(X', {\mathcal L}^{\otimes \ell}\otimes \mathcal{O}_{X'}(\sum_{i=1}^{n}E_i)) = 0$ for $\ell=1,2$.
Using the Riemann-Roch theorem,
we have
$$
    - \dim {\rm H}^1(X', {\mathcal L}^{-\ell})= \chi({\mathcal L}^{-\ell}) = 
    \frac{{\mathcal L}^{-\ell}\cdot ({\mathcal L}^{-\ell} - \sum_{i=1}^{n}E_i)}{2} + 2 
    = \frac{- 5\ell^2 n+ 3\ell n}{6} +2.
$$
Hence, we complete our proof.
\end{proof}

\begin{proposition}
\label{prop:dualizing}
The dualizing sheaf $\omega_{Y}$ of $Y$ is given by
$$
        \omega_{Y} \cong f^*\left({\mathcal L}^{\otimes 2}\otimes{\mathcal O}_{X'}(\sum_{i=1}^{n}E_i)\right).
$$
\end{proposition}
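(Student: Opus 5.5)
The plan is to use the standard description of the dualizing sheaf of a finite flat (here purely inseparable, locally monogenic) cover. Since $Y$ is locally given by $A_i[z_i]/(z_i^3-f_i)$, the morphism $f$ is finite and flat. Moreover, $Y$ is a hypersurface in the total space of the line bundle ${\mathcal L}$ (the cover is cut out by $z^3 = s$, where $s$ is the section of ${\mathcal L}^{\otimes 3}$ defining $\sum(\bar C_i+2\bar C'_i)$). Hence $Y$ is Gorenstein, and $\omega_Y$ is invertible. For a finite flat morphism we have
$$
f_*\omega_Y \cong \mathcal{H}om_{{\mathcal O}_{X'}}(f_*{\mathcal O}_Y,\omega_{X'}).
$$
Combined with $f_*{\mathcal O}_Y \cong {\mathcal O}_{X'}\oplus{\mathcal L}^{-1}\oplus{\mathcal L}^{-2}$, this gives
$$
f_*\omega_Y\cong \omega_{X'}\otimes({\mathcal O}_{X'}\oplus{\mathcal L}\oplus{\mathcal L}^{2}).
$$

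The key step is to identify this as an $f_*{\mathcal O}_Y$-module, namely to show that $\omega_Y\cong f^*(\omega_{X'}\otimes{\mathcal L}^{2})$. I would do this with the trace-type generator. Locally, $\mathcal{H}om_{A}(B,A)$ with $B=A[z]/(z^3-f)$ is a free $B$-module of rank one, generated by the functional $\phi$ dual to $z^2$, i.e.\ $\phi(1)=\phi(z)=0$ and $\phi(z^2)=1$. Indeed, $z\phi$ is dual to $z$ and $z^2\phi$ is dual to $1$, because $z\cdot z^2=f$ only affects the values at multiples of $f$. The same computation can be phrased via the adjunction for the hypersurface $Y\subset \mathbb{V}({\mathcal L})$: we have $\omega_{\mathbb{V}}=\pi^*(\omega_{X'}\otimes{\mathcal L}^{-1})$, and the normal bundle of $Y$ is $\pi^*{\mathcal L}^{3}$, which gives $\omega_Y = f^*(\omega_{X'}\otimes{\mathcal L}^{2})$ directly. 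I would present the adjunction argument as the main route and the local dual basis computation as a check.

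Next comes the transition check. On $U_i\cap U_j$ we have $z_i=f_{ij}z_j$, so the generator $\phi_i$ dual to $z_i^2$ transforms as $\phi_i = f_{ij}^{-2}\phi_j$. The local generators therefore glue to a sheaf isomorphic to $f^*{\mathcal L}^{2}$ twisted by $\omega_{X'}$. The sign and exponent conventions must be matched with those used for $f_*{\mathcal O}_Y$, where $z_i$ spans ${\mathcal L}^{-1}$. This bookkeeping is the main place where an error could occur: one has to make sure the answer is ${\mathcal L}^{2}$ and not ${\mathcal L}^{-2}$. As a consistency check, $f_*f^*({\mathcal L}^2\otimes\omega_{X'}) = \omega_{X'}\otimes({\mathcal L}^2\oplus{\mathcal L}\oplus{\mathcal O})$, which matches the formula above.

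Finally, substituting $\omega_{X'}\cong{\mathcal O}_{X'}(\sum E_i)$ from (\ref{dualizing1}) yields the stated formula. Note that non-normality of $Y$ causes no trouble, because the argument only uses that $f$ is finite flat with Gorenstein fibres.
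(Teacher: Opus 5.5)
Your proof is correct, but your main route (adjunction) differs from the paper's, which stays entirely within finite duality.

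\textbf{The paper's argument.} It writes $f_*\omega_Y\cong\mathcal{H}om(f_*\mathcal{O}_Y,\mathcal{O}_{X'})\otimes\omega_{X'}\cong(\mathcal{O}_{X'}\oplus\mathcal{L}\oplus\mathcal{L}^{2})\otimes\mathcal{O}_{X'}(\sum_i E_i)$. It then regroups the summands as $(\mathcal{O}_{X'}\oplus\mathcal{L}^{-1}\oplus\mathcal{L}^{-2})\otimes\mathcal{L}^{2}\otimes\mathcal{O}_{X'}(\sum_i E_i)\cong f_*f^*(\mathcal{L}^{2}\otimes\mathcal{O}_{X'}(\sum_i E_i))$ and concludes ``since $f$ is finite''.

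\textbf{What your dual generator adds.} The paper's final step tacitly needs the regrouping to be an isomorphism of $f_*\mathcal{O}_Y$-modules, not merely of $\mathcal{O}_{X'}$-modules. Your generator $\phi$ supplies exactly this. Since $z\phi$ and $z^2\phi$ are the remaining dual basis vectors and $z\cdot z^2\phi=f\phi$, multiplication by $z$ acts on $\mathcal{L}^{2}\oplus\mathcal{L}\oplus\mathcal{O}_{X'}$ precisely as on $\mathcal{L}^{2}\otimes f_*\mathcal{O}_Y$. The transition rule $\phi_i=f_{ij}^{-2}\phi_j$ then pins down the twist as $\mathcal{L}^{2}$ rather than $\mathcal{L}^{-2}$. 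So your ``check'' is really the step that makes the paper's argument complete.

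\textbf{What your adjunction route buys.} It bypasses finite duality altogether, gives Gorensteinness and invertibility of $\omega_Y$ for free, and is visibly insensitive to the non-normality of $Y$. Two small points of care. First, the ambient space must be $\mathrm{Spec}\bigoplus_{k\geq 0}\mathcal{L}^{-k}$, the space whose sheaf of sections is $\mathcal{L}$; in EGA's notation $\mathbb{V}(\mathcal{L})$ is the total space of $\mathcal{L}^{-1}$. Second, if you want a projective ambient space for adjunction, compactify to the $\mathbb{P}^1$-bundle. There $Y$ is closed and misses the section at infinity, since $z^3=sw^3$ forces $z=0$ when $w=0$.

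\textbf{What the paper's route buys.} It gives the explicit decomposition \eqref{eq:g_*omega} of $f_*\omega_Y$, which is reused in Corollary \ref{cohomologyY} to obtain $h^2(\mathcal{O}_Y)=1$. Your consistency check recovers this decomposition, so nothing is lost.
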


\begin{proof}
Since $\omega_{X'} \cong {\mathcal O}_{X'}(\sum_{i=1}^{n}E_i)$,
by the duality theorem of a finite morphism (Hartshorne \cite[Exercise III. 6.10 (a)]{H},
for instance) we have
\begin{eqnarray}
f_{*}{\omega_{Y}}  &\cong & {\mathcal Hom}(f_{*}{\mathcal O}_{Y}, \omega_{X'}) 
\cong {\mathcal Hom}(f_{*}{\mathcal O}_{Y}, {\mathcal O}_{X'})\otimes \omega_{X'}
\nonumber\\
 &\cong & {\mathcal Hom}({\mathcal O}_{X'} \oplus {\mathcal L}^{-1} \oplus {\mathcal L}^{-2}, {\mathcal O}_{X'}) \otimes {\mathcal O}_{X'}(\sum_{i=1}^{n}E_i) 
 \nonumber\\
  &\cong & ({\mathcal O}_{X'} \oplus {\mathcal L} \oplus {\mathcal L}^{\otimes 2})
  \otimes {\mathcal O}_{X'}(\sum_{i=1}^{n}E_i) 
  \nonumber\\
  &\cong & ({\mathcal O}_{X'} \oplus {\mathcal L}^{-1} 
  \oplus {\mathcal L}^{-2})\otimes({\mathcal L}^{\otimes 2}
  \otimes {\mathcal O}_{X'}(\sum_{i=1}^{n}E_i)) 
    \label{eq:g_*omega}\\
  &\cong & f_{*}\{{\mathcal O}_{Y'} \otimes f^*({\mathcal L}^{\otimes 2}\otimes {\mathcal O}_{X'}(\sum_{i=1}^{n}E_i))\} \cong f_{*}\{f^*({\mathcal L}^{\otimes 2}\otimes {\mathcal O}_{X'}(\sum_{i=1}^{n}E_i))\}.
  \nonumber
\end{eqnarray}
Since $f$ is a finite morphism, we conclude 
$\omega_{Y} \cong f^*({\mathcal L}^{\otimes 2}\otimes{\mathcal O}_{X'}(\sum_{i=1}^{n}E_i))$.
\end{proof}

\begin{lemma}\label{q}
The irregularity $q(Y) = 0$.
\end{lemma}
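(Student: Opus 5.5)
The plan is to read $q(Y)$ as the dimension of the Albanese variety of $Y$, equivalently the dimension of the abelian part of $\mathrm{Pic}^0_{Y,\mathrm{red}}$. This is the reading under which the statement can be true. Indeed, $f_*\mathcal O_Y\cong\mathcal O_{X'}\oplus\mathcal L^{-1}\oplus\mathcal L^{-2}$ together with Lemma \ref{L-1} gives
$$h^1(\mathcal O_Y)=\Bigl(\tfrac n3-2\Bigr)+\Bigl(\tfrac{7n}{3}-2\Bigr)=\tfrac{8n}{3}-4>0.$$
So $h^1(\mathcal O_Y)$ is not meant. This positivity is what one expects for a non-normal $Y$: it only contributes unipotent (additive) pieces to $\mathrm{Pic}^0_Y$. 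The task is therefore to show that no abelian variety appears.

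\textbf{Step 1: dominate $Y$ by a smooth surface with trivial Albanese.} The function field is $k(Y)=k(X')(f_i^{1/3})$, which lies in $k(X')^{1/3}$. Let $Z=X'^{(1/3)}$ be the Frobenius twist. Abstractly $Z$ is again the blow-up of a K3 surface in $n$ points, and $k(Z)=k(X')^{1/3}$. The $k$-linear relative Frobenius $Z\to X'$ factors through $Y$ at the level of function fields. Since $Z$ is normal and $Z\to X'$ is finite, $Z$ maps finitely onto the normalization $\tilde Y$ of $Y$, and hence onto $Y$. This gives a finite surjective purely inseparable morphism $h\colon Z\to Y$.

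\textbf{Step 2: pull back the Picard variety.} Consider $h^*\colon \mathrm{Pic}^0_{Y,\mathrm{red}}\to\mathrm{Pic}^0_{Z,\mathrm{red}}$. The target is trivial: $Z$ is a blow-up of a K3 surface, so $H^1(\mathcal O_Z)=0$ and $\mathrm{Pic}^0_Z=0$. Write $\mathrm{Pic}^0_{Y,\mathrm{red}}$ via Chevalley as an extension of an abelian variety $B$ by a linear group. The Albanese of $Y$ is dual to $B$, so it suffices to show $B=0$.

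For this, factor $h$ as $Z\to\tilde Y\to Y$. The map $\tilde Y\to Y$ is the normalization, and the kernel of $\mathrm{Pic}^0_Y\to\mathrm{Pic}^0_{\tilde Y}$ is linear, coming from $\nu_*\mathcal O_{\tilde Y}^*/\mathcal O_Y^*$. So $B$ injects up to isogeny into the abelian part of $\mathrm{Pic}^0_{\tilde Y}$. For the finite purely inseparable map $Z\to\tilde Y$ of normal surfaces, of degree $3$, the norm gives $N\circ h^*=[3]$ on $\mathrm{Pic}^0$. Hence $h^*$ has finite kernel on the abelian part. Since the target is $0$, the abelian part of $\mathrm{Pic}^0_{\tilde Y}$ vanishes, so $B=0$ and $q(Y)=0$.

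\textbf{Main obstacle.} I expect the hardest step to be the control of the kernel of pull-back along the normalization, i.e.\ making precise that it is affine/unipotent. That requires the conductor description of $\nu_*\mathcal O^*_{\tilde Y}/\mathcal O^*_Y$, which is a linear algebraic group. If this is awkward, the first fallback is to use instead the composite $Y\to X'$ together with Frobenius: the sandwich $Z\to Y\to X'$ shows the Albanese of $Y$ is purely inseparably isogenous to a quotient of $\mathrm{Alb}(Z)=0$.
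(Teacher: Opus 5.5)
Your proposal is correct, and your fallback is exactly the paper's proof. The paper reads $q(Y)=\dim\mathrm{Alb}(Y)$, consistent with your observation that $h^1(\mathcal O_Y)=\frac{8n}{3}-4$ (cf.\ Corollary~\ref{cohomologyY}). It uses the dominant map $X'^{(1/3)}\to Y$ and the universal property to get a surjection $\mathrm{Alb}(X'^{(1/3)})\to\mathrm{Alb}(Y)$, and concludes because a blown-up K3 surface has trivial Albanese. Your Step~1 already gives a finite surjective morphism $Z\to Y$, so this closes the argument at once and Step~2 is a detour. The detour would buy a genuinely stronger statement, namely that $\mathrm{Pic}^0_{Y,\mathrm{red}}$ has no abelian quotient at all, but as written it needs repairs.

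\begin{enumerate}
\item[(a)] For non-normal $Y$ the Albanese is not dual to $B$ in general: an elliptic curve $E$ with two points glued whose difference is non-torsion has $B=E$ but trivial Albanese. Only the implication $B=0\Rightarrow\mathrm{Alb}(Y)=0$ holds. Its natural proof uses the surjection $\tilde Y\to Y$, which is the paper's argument again.
\item[(b)] The non-normal locus of $Y$ is a curve (over the $\bar C_i'$, and over $B$ if $B\neq0$). So $H^0$ of your quotient sheaf $\nu_*\mathcal O_{\tilde Y}^*/\mathcal O_Y^*$ also contains a contribution from $\ker(\mathrm{Pic}(D)\to\mathrm{Pic}(\tilde D))$ for the conductor curves $D,\tilde D$. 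Its linearity needs an additional Jacobian/norm argument on curves, not just the conductor description.
\item[(c)] $Z\to\tilde Y$ cannot be flat over singular points of $\tilde Y$ (which do occur, cf.\ Lemma~\ref{singularity}), since the source is regular. So the norm must be taken on Weil divisor classes via $\mathrm{Pic}(\tilde Y)\hookrightarrow\mathrm{Cl}(\tilde Y)$, or replaced by the factorization $\tilde Y^{(1/3)}\to Z\to\tilde Y$ of Frobenius.
\end{enumerate}

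None of these is fatal, but none of them is needed for the lemma.
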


\begin{proof}
Since $f$ is a purely inseparable morphism of degree 3, there is a dominant rational map
$X'^{(1/3)} \longrightarrow Y$. The universality of the Albanese variety gives
a surjective morphism ${\rm Alb}(X'^{(1/3)}) \longrightarrow {\rm Alb}(Y)$.
Since $X'^{(1/3)}$ is birationally equivalent to a K3 surface, 
we have $\dim {\rm Alb}(X'^{(1/3)}) =0$, and we conclude
$q(Y) = \dim {\rm Alb}(Y) = 0$.
\end{proof}

\begin{proposition}\label{chi}
  $\chi ({\mathcal O}_{Y}) =  - \frac{8n}{3} + 6$.
\end{proposition}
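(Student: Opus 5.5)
Since $f$ is finite, $f_*$ is exact on quasi-coherent sheaves and preserves cohomology, so
$$\chi(\mathcal O_Y)=\chi(f_*\mathcal O_Y)=\chi(\mathcal O_{X'})+\chi(\mathcal L^{-1})+\chi(\mathcal L^{-2}),$$
using the decomposition $f_*\mathcal O_Y\cong \mathcal O_{X'}\oplus\mathcal L^{-1}\oplus\mathcal L^{-2}$ established above. The plan is simply to evaluate the three summands with results already in hand.

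First, $X'$ is a blow-up of the K3 surface $X$, and $\chi(\mathcal O)$ is a birational invariant of smooth projective surfaces. Hence $\chi(\mathcal O_{X'})=\chi(\mathcal O_X)=2$.

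Next, Lemma \ref{L-1} gives $\chi(\mathcal L^{-\ell})=\frac{-5\ell^2n+3\ell n}{6}+2$. This formula comes from Riemann--Roch with $\omega_{X'}\cong\mathcal O_{X'}(\sum E_i)$, using $\mathcal L^2=-5n/3$ from (\ref{L^2}) and $\mathcal L\cdot E_i=1$ (because $3\mathcal L\cdot E_i=(\bar C_i+2\bar C'_i)\cdot E_i=3$). Substituting:
\begin{itemize}
\item for $\ell=1$: $\chi(\mathcal L^{-1})=\frac{-5n+3n}{6}+2=-\frac{n}{3}+2$;
\item for $\ell=2$: $\chi(\mathcal L^{-2})=\frac{-20n+6n}{6}+2=-\frac{7n}{3}+2$.
\end{itemize}

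Summing gives $2+(-\frac n3+2)+(-\frac{7n}{3}+2)=-\frac{8n}{3}+6$, as claimed.

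There is no real obstacle. The only points needing care are that $Y$ need not be normal, which is harmless because we compute through the finite pushforward, and that the intersection numbers feeding Lemma \ref{L-1} are correct, which I would recheck quickly. As a consistency check, Lemma \ref{q} together with $h^1$ and $h^2$ via Proposition \ref{prop:dualizing} could be used, but it is not needed.
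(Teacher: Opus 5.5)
Your proof is correct and is the same as the paper's. You push forward along the finite map $f$, split $f_*\mathcal O_Y\cong\mathcal O_{X'}\oplus\mathcal L^{-1}\oplus\mathcal L^{-2}$, and add $\chi(\mathcal O_{X'})=2$ to the values $-\frac n3+2$ and $-\frac{7n}{3}+2$ from Lemma \ref{L-1}.

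One caution about your optional consistency check. Lemma \ref{q} only says $\dim{\rm Alb}(Y)=0$, whereas for the non-normal $Y$ one has $\dim{\rm H}^1(Y,\mathcal O_Y)=\frac{8n}{3}-4\neq 0$ (Corollary \ref{cohomologyY}). Reading Lemma \ref{q} as $h^1(\mathcal O_Y)=0$ would therefore give the false value $\chi(\mathcal O_Y)=2$.
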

\begin{proof}
Since $f$ is a finite morphism, we have $R^if_{*}{\mathcal O}_{Y} = 0$ for $i \geq 1$.
Therefore we obtain
$$
\begin{array}{rl}
    \chi ({\mathcal O}_{Y}) &= \chi(f_{*}{\mathcal O}_{Y}) =
    \chi(({\mathcal O}_{X'} \oplus{\mathcal L}^{-1} \oplus{\mathcal L}^{-2})) \\
   & = \chi ({\mathcal O}_{X'}) + \chi({\mathcal L}^{-1}) + \chi({\mathcal L}^{-2}) 
   = 2 +\frac{- 5 n+ 3 n}{6} + 2 + \frac{- 20 n+ 6n}{6} + 2\\
   & = - \frac{8n}{3} + 6.
\end{array}
$$
\end{proof}

\begin{corollary}\label{cohomologyY}
$\dim {\rm H}^0(Y, {\mathcal O}_{Y})= \dim {\rm H}^2(Y, {\mathcal O}_{Y})=1$, 
$\dim {\rm H}^1(Y, {\mathcal O}_{Y})= \frac{8n}{3} -4$.
\end{corollary}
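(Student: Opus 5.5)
The approach is to compute each cohomology group of $\mathcal{O}_Y$ through the finite morphism $f$, and then use Proposition \ref{chi} to pin down $h^1$.

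Since $f$ is finite, all higher direct images vanish, so
$$
{\rm H}^j(Y,\mathcal{O}_Y)\cong {\rm H}^j(X', f_*\mathcal{O}_Y)\cong {\rm H}^j(X',\mathcal{O}_{X'})\oplus {\rm H}^j(X',\mathcal{L}^{-1})\oplus {\rm H}^j(X',\mathcal{L}^{-2})
$$
for every $j$. This reduces everything to cohomology on $X'$, where the terms are already known. Because $X'$ is the blow-up of the K3 surface $X$, we have $h^0(\mathcal{O}_{X'})=h^2(\mathcal{O}_{X'})=1$ and $h^1(\mathcal{O}_{X'})=0$. By Lemma \ref{L-1}, $h^0(\mathcal{L}^{-\ell})=h^2(\mathcal{L}^{-\ell})=0$ for $\ell=1,2$.

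Summing the contributions gives $h^0(\mathcal{O}_Y)=1$ and $h^2(\mathcal{O}_Y)=1$ directly. For $h^1$ there are two equivalent routes. The first is to add the values from Lemma \ref{L-1}:
$$
\frac{5n-3n}{6}-2+\frac{20n-6n}{6}-2=\frac{8n}{3}-4 .
$$
The second is to use $\chi(\mathcal{O}_Y)=-\frac{8n}{3}+6$ from Proposition \ref{chi} together with $h^0=h^2=1$, which gives $h^1=2-\chi(\mathcal{O}_Y)=\frac{8n}{3}-4$. The two results agree, which serves as a consistency check.

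There is no real obstacle. The only point to watch is the vanishing of $h^0$ and $h^2$ of $\mathcal{L}^{-\ell}$, and Lemma \ref{L-1} has already established this. One might also remark that $h^0(\mathcal{O}_Y)=1$ reflects that $Y$ is connected and reduced, being irreducible as a purely inseparable cover. Note also that $h^1(\mathcal{O}_Y)$ is large even though $q(Y)=0$ by Lemma \ref{q}. This does not contradict anything, since $Y$ is not normal, and I would mention this as a remark pointing toward the normalization analysis that follows.
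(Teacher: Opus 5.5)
Your proof is correct and takes essentially the paper's approach: both push down along the finite map $f$, use the splitting of $f_*\mathcal{O}_Y$ together with Lemma~\ref{L-1}, and you also offer the paper's way of reading off $h^1$ from Proposition~\ref{chi}. The only difference is cosmetic: the paper gets $h^2(\mathcal{O}_Y)=1$ through $H^0(Y,\omega_Y)$ and the description \eqref{eq:g_*omega} of $f_*\omega_Y$, which is just the Serre-dual form of the vanishing $h^2(\mathcal{L}^{-\ell})=0$ that you use directly.
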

\begin{proof}
The assertion $\dim {\rm H}^0(Y, {\mathcal O}_{Y})= 1$ is clear. Since $g$ is a finite morphism,
we have $R^if_*\omega_Y = 0$ for $i \geq 1$. By \eqref{eq:g_*omega}, we have, using the finite morphism $f$ again,
$$
\begin{array}{rl}
   {\rm H}^2(Y, {\mathcal O}_{Y})\cong {\rm H}^0(Y, \omega_{Y})& \cong 
   {\rm H}^0(X', f_* \omega_{Y})\\
   &\cong {\rm H}^0(X', ({\mathcal O}_{X'} \oplus{\mathcal L} \oplus{\mathcal L}^{2})\otimes {\mathcal O}_{X'}(\sum_{i=1}^nE_i)) \cong k.
\end{array}   
$$
Finally, by Lemma \ref{chi} we conclude $\dim {\rm H}^1(Y, {\mathcal O}_{Y})=\frac{8n}{3} -4$.
\end{proof}

\section{Normalization}\label{Normalization}
\label{s:normal}

We consider a rational 1-form $\eta$ on $X'$ defined by
$$
    \eta = df_i/f_i  ~\mbox{on}~U_i.
$$ 
We denote the divisorial part by $(\eta)$
and the isolated singularity part by $\langle \eta \rangle$
(cf. Rudakov-Shafarevich \cite[Section 1]{RS0}). Then,
by an explicit calculation we have
\begin{equation}
\label{eq:eta}
 (\eta) = B - \sum_{i=1}^{n}(\bar{C}_i + \bar{C'_i}).
\end{equation}
with an effective divisor $B$. 

For the point $R_i$,
we consider a (partial) normalization:
$$
\left\{
\begin{array}{l}
    y_i= {Y}_i{Z}_i \\
    w_i={W}_i  \\
    z_i= {Z}_i  \\
\end{array}
\right..
$$
Then, we have 
\begin{equation}\label{Y'}
     {Z}_i =u_i{Y}_i^2. 
\end{equation}
For the coordinate rings, the (partial) normalization is locally given by
\begin{equation}\label{secondnormal}
    A_i[z_i][\frac{y_i}{z_i}] \supset A_i[z_i].
\end{equation}
We denote the obtained surface by $Y'$. Then we have a natural morphism
$$
\mu': Y' \longrightarrow Y.
$$
Using (\ref{secondnormal}), we have the conductor ideal
\begin{equation}\label{Ann}
{\rm Ann}(A_i[z_i][\frac{y_i}{z_i}]/A_i[z_i]) 
=(z_i) \subset A_i[z_i].
\end{equation}
From the equations (\ref{Y'}) obtained above, we see that 
over the divisor $\bar{C}_i + \bar{C'_i}+ E_i$
the surface $Y'$ is nonsingular. Since $B$ is a singular locus of codimension 1 in $Y'$, 
it does not intersect the divisor $\bar{C}_i + \bar{C'_i}+ E_i$.
There exist nonsingular rational curves $\tilde{C}_i$,
$\tilde{C'_i}$, $\tilde{E}_i$ on $Y''$ such that
\begin{equation}\label{pull-back}
(f\circ \mu')^{-1}(\bar{C}_i) = 3\tilde{C}_i, (f\circ \mu')^{-1}(\bar{C'_i}) 
= 3\tilde{C'_i}, 
(f\circ \mu')^{-1}(E_i) = \tilde{E}_i
\end{equation}
with self-intersection numbers
\begin{equation}\label{exceptional}
   \tilde{C}_i^2 = -1, \tilde{C'_i}^2 = -1, \tilde{E}_i^2 = -3.
\end{equation}

On $Y$, we define a Cartier divisor $C$ by $z_i = 0$ on $f^{-1}(U_i)$.
Then, by (\ref{Ann}) we have
$$
{\rm Ann}({\mu'}_{*}({\mathcal O}_{Y'})/{\mathcal O}_{Y})\cong {\mathcal O}_{Y}(-C)
    \subset {\mathcal O}_{Y}.
$$
Therefore, we have, as a ${\mu''}_{*}({\mathcal O}_{Y'})$-module,
$$
    {\mathcal Hom}({\mu'}_{*}({\mathcal O}_{Y'}), {\mathcal O}_{Y}) 
    \cong {\rm Ann}({\mu'}_{*}({\mathcal O}_{Y'})/{\mathcal O}_{Y})
    \otimes_{{\mathcal O}_{Y}}{\mu'}_{*}({\mathcal O}_{Y'})
    \cong {\mu'}_{*}({\mathcal O}_{Y'})\otimes_{{\mathcal O}_{Y}}{\mathcal O}_{Y}(-C)
$$
(cf. \cite[p.217]{CDL}). 
\begin{lemma}\label{Y''C}
${\mu'}_{*}\omega_{Y'}\cong 
f^*(({\mathcal L})^2\otimes {\mathcal O}_{X'}(\sum_{i=1}^n E_i))\otimes{\mathcal O}_{Y}(-C)
\otimes {\mu'}_{*}({\mathcal O}_{Y'})$.
\end{lemma}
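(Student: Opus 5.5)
The plan is to apply duality for the finite birational morphism $\mu'\colon Y'\to Y$ in the same way Proposition \ref{prop:dualizing} used it for $f$. Since $\mu'$ is finite, Hartshorne \cite[Exercise III.6.10]{H} gives
$$
{\mu'}_{*}\omega_{Y'} \cong {\mathcal Hom}_{{\mathcal O}_Y}({\mu'}_{*}{\mathcal O}_{Y'}, \omega_Y).
$$
This isomorphism holds as ${\mu'}_{*}{\mathcal O}_{Y'}$-modules, which is what we need to identify the right-hand side of the lemma.

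First we use that $\omega_Y$ is invertible. By Proposition \ref{prop:dualizing}, $\omega_Y\cong f^*({\mathcal L}^{\otimes 2}\otimes{\mathcal O}_{X'}(\sum E_i))$; indeed $Y$ is locally a hypersurface $z_i^3=f_i$ in a smooth threefold, hence Gorenstein. So we can pull $\omega_Y$ out of the Hom:
$$
{\mathcal Hom}({\mu'}_{*}{\mathcal O}_{Y'},\omega_Y)\cong {\mathcal Hom}({\mu'}_{*}{\mathcal O}_{Y'},{\mathcal O}_Y)\otimes_{{\mathcal O}_Y}\omega_Y .
$$

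Second, we substitute the identification established just before the lemma:
$$
{\mathcal Hom}({\mu'}_{*}{\mathcal O}_{Y'},{\mathcal O}_Y)\cong {\mu'}_{*}{\mathcal O}_{Y'}\otimes{\mathcal O}_Y(-C).
$$
This comes from the conductor computation (\ref{Ann}) and \cite[p.217]{CDL}. Combining the two displays gives the stated formula.

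The main point to check is that this identification really holds globally, not just on the charts $f^{-1}(U_i)$ around $R_i$. On the charts away from the $R_i$, $\mu'$ is an isomorphism, so both sides reduce to ${\mathcal O}_Y$ up to the factor ${\mathcal O}_Y(-C)$. We therefore have to confirm that the conductor is the invertible ideal ${\mathcal O}_Y(-C)$ on the whole of $Y$, that is, that $C$ is defined exactly where the partial normalization happens. Concretely:
\begin{itemize}
\item Check that the conductor ideal is locally principal, generated by $z_i$, via the local computation $A_i[z_i][y_i/z_i]=A_i[z_i]+ (y_i/z_i)A_i[z_i]+\dots$ using $(y_i/z_i)^3 = y_i^3/(u_iy_i^2)\cdot(\ldots)$.
\item For an invertible conductor ideal $I$ of a finite birational extension $R\subset S$, use the standard fact that ${\rm Hom}_R(S,R)=I\cong I\otimes_R S$.
\end{itemize}
Granting that step, which the paper already records, the lemma is formal.
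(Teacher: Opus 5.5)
Your chain of isomorphisms is exactly the paper's proof. It uses duality for the finite morphism $\mu'$, pulls the invertible sheaf $\omega_Y$ out of the $\mathcal{H}om$, and inserts the identification $\mathcal{H}om(\mu'_*\mathcal{O}_{Y'},\mathcal{O}_Y)\cong\mu'_*\mathcal{O}_{Y'}\otimes\mathcal{O}_Y(-C)$ recorded just before the lemma.

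What needs correcting is your sketch for verifying that identification: its first bullet would fail. Put $R=A_i[z_i]$, $t=y_i/z_i$ and $S=R[t]$. Since $t^2=z_i/u_i\in R$, we have $S=R+Rt$. Writing $r=a+bz_i+cz_i^2$ with $a,b,c\in A_i$, one gets $rt=at+by_i+cy_iz_i$, and this lies in $R$ iff $y_i\mid a$. So the conductor is $\mathfrak{c}=(y_i,z_i)R$, which is not principal in $R$: at the generic point of $f^{-1}(\bar{C}'_i)$ it is the maximal ideal of the non-normal one-dimensional ring $z^3=uy^2$. In fact the hypothesis of your second bullet can never hold for $S\neq R$. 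Since $\mathfrak{c}$ is also an ideal of $S$, $\mathfrak{c}=cR$ with $c$ a nonzerodivisor would give $cS\subseteq cR$, i.e.\ $S=R$. What is true, and suffices, is $\mathfrak{c}=(y_i,z_i)R=z_iS$, because $y_i=z_it$. Hence $\mathcal{H}om_{\mathcal{O}_Y}(\mu'_*\mathcal{O}_{Y'},\mathcal{O}_Y)\cong\mathfrak{c}=z_i\,\mu'_*\mathcal{O}_{Y'}\cong\mathcal{O}_Y(-C)\otimes\mu'_*\mathcal{O}_{Y'}$. The last step holds because $z_i$ is a nonzerodivisor on the integral surface $Y'$. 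This is also how the paper's \eqref{Ann} has to be read.

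Your global concern is the right one, but it needs sharpening. First, $\mu'$ fails to be an isomorphism along all of $f^{-1}(\bar{C}'_i)$, not only near $R_i$. Second, $C$ has to be the Cartier divisor given by $z_i$ on charts covering $\bar{C}'_i$ and by $1$ elsewhere; this is possible because $\bar{C}_i\cap\bar{C}'_i=\emptyset$ on $X'$. If $z_i$ were used on every chart, $C$ would also contain the reduced curve $\{z_i=0\}=\tilde{C}_i$ over $\bar{C}_i$, where $\mu'$ is an isomorphism. The right-hand side would then pick up an extra twist by $\mathcal{O}_{Y'}(-\sum_i\tilde{C}_i)$, which is nontrivial since it has degree $1$ on each $\tilde{C}_i$ (recall $\tilde{C}_i^2=-1$). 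The paper's later identity $f_*\mathcal{O}_Y(-3C)\cong f_*\mathcal{O}_Y\otimes\mathcal{O}_{X'}(-\sum_i 2\bar{C}'_i)$ confirms that this restricted divisor is the intended $C$.
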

\begin{proof}
Since $\mu'$ is a finite morphism, by a duality theorem for a finite morphism we have
$$
\begin{array}{rl}
   {\mu'}_{*}\omega_{Y'} & \cong {\mathcal Hom}({\mu'}_{*}({\mathcal O}_{Y'}), \omega_{Y}) \\
   &  \cong {\mathcal Hom}({\mu'}_{*}({\mathcal O}_{Y'}), {\mathcal O}_{Y}) \otimes  
   \omega_{Y}\\ 
   & \cong f^*(({\mathcal L})^2\otimes {\mathcal O}_{X'}(\sum_{i=1}^n E_i))
   \otimes {\mu'}_{*}({\mathcal O}_{Y'})\otimes{\mathcal O}_{Y}(-C).
\end{array}
$$
\end{proof}

Next we blow down $\tilde{C}_i$ and $\tilde{C'_i}$ $(i= 1, 2, \hdots, n)$. 
Then, the curve $\tilde{E}_i$
becomes an exceptional curve of the first kind. We also blow-down 
$\tilde{E}_i$ $(i= 1, 2, \hdots, n)$ and we denote the obtained surface by $Z'$:
$$
         h : Y' \longrightarrow Z'.
$$

We denote by $\omega_{Z'}$ the dualizing sheaf of $Z'$. Then we have
\begin{equation}\label{Y'Z}
  \omega_{Y'} \cong h^*(\omega_{Z'})\otimes {\mathcal O}_{Y'}(\sum_{i}^n(2\tilde{C}_i+ 2\tilde{C'_i}
  + \tilde{E}_i)).
\end{equation}
By (\ref{pull-back}) we have $f_{*}({\mathcal O}_{Y}(-3C)) = (f_{*}{\mathcal O}_{Y})\otimes {\mathcal O}_{X'}(-\sum_{i=1}^{n}2\bar{C'_i})$.
Therefore, we have
\begin{equation}\label{omegaY'}
\begin{array}{rl}
  ({f\circ \mu'})_{*}\omega_{Y'}^{\otimes 3} &\cong f_{*}(f^*(({\mathcal L})^6\otimes {\mathcal O}_{X'}(\sum_{i=1}^n 3E_i))\otimes{\mathcal O}_{Y}(-3C)
\otimes {\mu'}_{*}({\mathcal O}_{Y'}))\\
      & \cong (({\mathcal L})^6\otimes {\mathcal O}_{X'}(\sum_{i=1}^n 3E_i))
      \otimes f_*({\mathcal O}_{Y}(-3C)
\otimes {\mu'}_{*}({\mathcal O}_{Y'}))\\
      & \cong (({\mathcal L})^6\otimes {\mathcal O}_{X'}(\sum_{i=1}^n 3E_i))
      \otimes ({\mathcal O}_{X'}(-\sum_{i=1}^n 2\bar{C'_i})\otimes f_*({\mu'}_{*}({\mathcal O}_{Y'}))\\
      & \cong {\mathcal O}_{X'}(\sum_{i=1}^n (2\bar{C}_i + 4\bar{C'_i} +3E_i))\otimes {\mathcal O}_{X'}(-\sum_{i=1}^n 2\bar{C'_i})\otimes f_*({\mu'}_{*}({\mathcal O}_{Y'}))\\
       & \cong {\mathcal O}_{X'}(\sum_{i=1}^n (2\bar{C}_i + 2\bar{C'_i} +3E_i))\otimes
       f_*({\mu'}_{*}({\mathcal O}_{Y'}))\\
       & \cong ({f\circ \mu'})_{*}({f\circ \mu'})^*({\mathcal O}_{X'}(\sum_{i=1}^n (2\bar{C}_i + 2\bar{C'_i} +3E_i)))\\
       & \cong ({f\circ \mu'})_{*}({\mathcal O}_{Y'}(\sum_{i=1}^n (6\tilde{C}_i + 6\tilde{C'_i} +3\tilde{E}_i)))
\end{array}
\end{equation}
Thus we infer that
\begin{equation}\label{3}
     \omega_{Y'}^{\otimes 3}\cong {\mathcal O}_{Y'}(\sum_{i=1}^n (6\tilde{C}_i + 6\tilde{C'_i} +3\tilde{E}_i)).
\end{equation}

\begin{lemma}\label{3h}
    $h^*(\omega_{Z'}^{\otimes 3}) \cong {\mathcal O}_{Y'}$.
\end{lemma}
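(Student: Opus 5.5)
Combining \eqref{Y'Z} with \eqref{3} reduces the claim to a cancellation. Taking the third tensor power of \eqref{Y'Z} gives
$$
\omega_{Y'}^{\otimes 3}\cong h^*(\omega_{Z'}^{\otimes 3})\otimes {\mathcal O}_{Y'}\Bigl(\sum_{i=1}^n(6\tilde{C}_i+6\tilde{C'_i}+3\tilde{E}_i)\Bigr).
$$
Comparing with \eqref{3}, the twisting divisors coincide. Since $\Pic(Y')$ is a group, tensoring both sides with ${\mathcal O}_{Y'}(-\sum_{i=1}^n(6\tilde{C}_i+6\tilde{C'_i}+3\tilde{E}_i))$ yields $h^*(\omega_{Z'}^{\otimes 3})\cong{\mathcal O}_{Y'}$.

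The substance therefore lies in checking that the two inputs are legitimate as isomorphisms of invertible sheaves, and I would verify them in this order.

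First, \eqref{Y'Z}. Near $\tilde{C}_i\cup\tilde{C'_i}\cup\tilde{E}_i$ the surface $Y'$ is nonsingular, and $h$ is a composition of blow-downs of $(-1)$-curves there. Contracting $\tilde{C}_i$ and $\tilde{C'_i}$ contributes $\tilde{C}_i+\tilde{C'_i}$. The image of $\tilde{E}_i$ then meets two blown-up points, so its self-intersection rises from $-3$ to $-1$. Contracting it contributes the pullback of the new exceptional curve, which is $\tilde{E}_i+\tilde{C}_i+\tilde{C'_i}$. The total is $2\tilde{C}_i+2\tilde{C'_i}+\tilde{E}_i$, as claimed. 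Away from these curves $h$ is an isomorphism, so $\omega_{Y'}$ and $h^*\omega_{Z'}$ agree there even though $Y'$ may be singular along $B$.

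Second, \eqref{3}. Here the point needing care is that $\omega_{Y'}$ is invertible, or at least that the pushforward computation \eqref{omegaY'} determines $\omega_{Y'}^{\otimes 3}$. By Lemma \ref{Y''C}, $\mu'_*\omega_{Y'}$ is $\mu'_*$ of an invertible sheaf. This sheaf is the pullback to $Y'$ of $f^*(\mathcal L^2(\sum E_i))\otimes{\mathcal O}_Y(-C)$. So $\omega_{Y'}$ is itself that invertible sheaf, and its cube is the pullback of
$$
\mathcal L^6\Bigl(\sum_{i=1}^n 3E_i\Bigr)\otimes{\mathcal O}_Y(-3C).
$$
Using $3\mathcal L\sim\sum(\bar C_i+2\bar{C'_i})$, the relation ${\mathcal O}_Y(-3C)=f^*{\mathcal O}_{X'}(-\sum(\bar C_i+2\bar{C'_i}))$ from $z_i^3=f_i$, and the multiplicities in \eqref{pull-back}, one gets \eqref{3} directly as an identity of divisors on $Y'$. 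This computation is cleaner than going through pushforwards.

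The main obstacle is that \eqref{Y'Z} is only justified on the locus where $Y'$ is smooth, since $Y'$ may still be non-normal along $B$. The global statement $\omega_{Y'}\cong h^*\omega_{Z'}(\sum(2\tilde C_i+2\tilde{C'_i}+\tilde E_i))$ needs $h$ to be an isomorphism near $B$. This holds because $B$ is disjoint from the contracted curves, so the two sheaves are glued from isomorphic pieces, and both are invertible on the relevant neighbourhoods.
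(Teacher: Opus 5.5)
Your cancellation argument is exactly the paper's proof: cube \eqref{Y'Z}, compare with \eqref{3}, and cancel. That part is correct. So is your check of \eqref{Y'Z} via the two-step contraction of the $(-1,-3,-1)$ chain, and your remark that $h$ is an isomorphism near $B$.

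However, the ``cleaner'' direct verification of \eqref{3} you sketch fails as written. You combine $\omega_{Y'}\cong\mu'^*\bigl(f^*(\mathcal L^{\otimes 2}\otimes\mathcal O_{X'}(\sum_i E_i))\otimes\mathcal O_Y(-C)\bigr)$ with your relation $\mathcal O_Y(-3C)\cong f^*\mathcal O_{X'}(-\sum_i(\bar C_i+2\bar{C'_i}))$. This gives
$\omega_{Y'}^{\otimes 3}\cong (f\circ\mu')^*\mathcal O_{X'}(\sum_i(\bar C_i+2\bar{C'_i}+3E_i))\cong\mathcal O_{Y'}(\sum_i(3\tilde C_i+6\tilde{C'_i}+3\tilde E_i))$, which is not \eqref{3}. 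It is in fact impossible. Adjunction on the smooth $(-1)$-curve $\tilde C_i$ forces $K_{Y'}\cdot\tilde C_i=-1$, whereas this divisor gives $K_{Y'}\cdot\tilde C_i=0$. Equivalently, cancelling against the cube of \eqref{Y'Z} would give $h^*\omega_{Z'}^{\otimes 3}\cong\mathcal O_{Y'}(-3\sum_i\tilde C_i)$, which has nonzero degree on curves contracted by $h$.

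The mistake is treating the twist in Lemma \ref{Y''C} as the full divisor $\{z_i=0\}$. That twist is the conductor of $\mu'$.
\begin{itemize}
\item Along $\bar C_i$ the cover is $z^3=ux$, so $Y$ is already smooth there, $\mu'$ is an isomorphism, and there is no conductor.
\item Over $\bar{C'_i}$, put $T=y/z$. Then $z=uT^2$ and $y=uT^3$, so the conductor is $T^2$ times the normalized local ring, i.e.\ $\mathcal O_{Y'}(-2\tilde{C'_i})$.
\end{itemize}
Hence the correct relation is $\mu'^*\mathcal O_Y(-3C)\cong(f\circ\mu')^*\mathcal O_{X'}(-\sum_i 2\bar{C'_i})$, which is what the paper uses. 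Then $\mathcal L^{\otimes 6}\otimes\mathcal O_{X'}(\sum_i(3E_i-2\bar{C'_i}))\cong\mathcal O_{X'}(\sum_i(2\bar C_i+2\bar{C'_i}+3E_i))$, and this pulls back to \eqref{3}.

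Since the lemma only needs \eqref{3} as already established in the paper, your proof of the lemma itself stands. You should drop or correct that side-verification.
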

\begin{proof}
By (\ref{Y'Z}) and (\ref{3}), we have $h^*(\omega_{Z'})^{\otimes 3} \cong {\mathcal O}_{Y'}$.
\end{proof}



Regarding the relationship between $\deg \langle \eta \rangle$ and $B^2$, we have the following formula:

\begin{lemma}\label{<eta>}
$\deg \langle \eta \rangle = B^2 + 24 -3n$.
\end{lemma}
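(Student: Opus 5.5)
We use the Rudakov--Shafarevich formula for a logarithmic (hence closed) rational $1$-form $\eta$ on the smooth projective surface $X'$. The number of isolated zeros, counted with multiplicity, is
$$
\deg\langle\eta\rangle = c_2(X') - K_{X'}\cdot(\eta) - (\eta)^2
= c_2(X') + (\eta)\cdot\bigl(K_{X'}-(\eta)\bigr)\cdot(-1)\cdot(-1),
$$
that is, $\deg\langle\eta\rangle = c_2(X') + K_{X'}\cdot(\eta)\cdot(-1) + \ldots$. To avoid sign slips, I will use it in the standard form
$$
c_2(X') = \deg\langle\eta\rangle - K_{X'}\cdot(\eta) - (\eta)^2 .
$$
This is the relation $c_2(\Omega^1\otimes\mathcal O(-(\eta)))=\deg\langle\eta\rangle$, obtained by viewing $\eta$ as a section of $\Omega^1_{X'}(-(\eta))$ with only isolated zeros. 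I will first check this form on a known case.

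\textbf{Step 1: invariants of $X'$.} Since $X'$ is $X$ blown up in $n$ points, $c_2(X')=24+n$. By (\ref{dualizing1}), $K_{X'}=\sum E_i$, and $K_{X'}^2=-n$.

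\textbf{Step 2: intersection numbers with $(\eta)$.} Write $(\eta)=B-\Gamma$ with $\Gamma=\sum(\bar C_i+\bar C_i')$, as in (\ref{eq:eta}). The discussion after (\ref{Ann}) shows that $B$ is disjoint from $\bar C_i+\bar C'_i+E_i$. Hence $B\cdot\Gamma=0$ and $B\cdot E_i=0$. We also have $\Gamma^2=\sum(-3-3)=-6n$, because $\bar C_i\cdot\bar C_i'=0$ on $X'$. Further, $K_{X'}\cdot\Gamma=\sum E_i\cdot(\bar C_i+\bar C_i')=2n$. Therefore
$$
K_{X'}\cdot(\eta)=-2n,\qquad (\eta)^2=B^2-6n .
$$

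\textbf{Step 3: assemble.} Substituting into the formula gives
$$
\deg\langle\eta\rangle=c_2(X')+K_{X'}\cdot(\eta)+(\eta)^2 = 24+n-2n+B^2-6n .
$$
This equals $B^2+24-7n$, which does not match the claim. So the correct sign convention must be
$$
\deg\langle\eta\rangle=c_2-K\cdot(\eta)+(\eta)^2 .
$$
Check: for a section $\eta$ of $\Omega^1(-D)$ we get $c_2(\Omega^1\otimes\mathcal O(-D))=c_2-c_1(\Omega)\cdot D+D^2=c_2-K\cdot D+D^2$, where $D=(\eta)$ and $\eta$ is viewed as a section of $\Omega^1(-D)$. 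Then
$$
\deg\langle\eta\rangle=24+n+2n+B^2-6n=B^2+24-3n,
$$
as claimed.

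The main obstacle is rigor in Step 2. We need $B$ to be genuinely disjoint from all the curves $\bar C_i,\bar C_i',E_i$. We also need the multiplicities in (\ref{eq:eta}) to be exactly $-1$ along $\bar C_i$ and $\bar C'_i$, and $0$ along $E_i$. I would verify these from the local forms. At $Q_i$ we have $\eta=du_i/u_i+dx_i/x_i$. At $R_i$ we have $\eta=du_i/u_i+2\,dy_i/y_i=du_i/u_i-dy_i/y_i$ in characteristic $3$. In both cases there is a simple pole along the curve. The form is regular and nonvanishing along $E_i$ generically, since $dx_i$ (resp. $dy_i$) and $dw_i$ are independent there. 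The exact location of $B$ is then controlled by the normality statement for $Y'$ over these curves, which the text has already given.
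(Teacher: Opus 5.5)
Your final argument is correct and is essentially the paper's proof. The paper applies Igusa's formula $c_2(X') = \deg\langle\eta\rangle + (\eta)\cdot K_{X'} - (\eta)^2$ with exactly your values $c_2(X')=24+n$, $(\eta)\cdot K_{X'}=-2n$ and $(\eta)^2=B^2-6n$, relying on the previously stated disjointness of $B$ from $\bar C_i,\bar C_i',E_i$. In a write-up, drop the false start: $c_2=\deg\langle\eta\rangle-K\cdot(\eta)-(\eta)^2$ is the formula for vector fields, and the sign for $1$-forms must be justified by your computation $c_2(\Omega^1_{X'}(-(\eta)))=c_2-K\cdot(\eta)+(\eta)^2$, not by matching the desired answer.
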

\begin{proof}
By Igusa's formula, we have
$$
   24 + n = c_2(X') = \deg \langle \eta\rangle + (\eta)\cdot \omega_{X'} - (\eta)^2.
$$
As we have already noted, $B$ does not meet $C_i$, $C_i'$ or $E_i$.
Since $(\eta)\cdot \sum_{i=1}^nE_i = -2n$ and $(\eta)^2 = B^2 - 6n$, the desired result follows.
\end{proof}

Finally, in this section, we provide some remarks concerning the special case $B = 0$.

\begin{lemma}\label{6}
If $B = 0$, then $n = 6$ and $\deg \langle \eta \rangle = 6$.
\end{lemma}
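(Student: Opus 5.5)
The plan is to specialize the formula of Lemma \ref{<eta>} to the case $B=0$ and then use the restriction on $n$ from Corollary \ref{6,9}.

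First, if $B=0$ then $B^2=0$, and Lemma \ref{<eta>} gives directly
$$
\deg\langle\eta\rangle = 24-3n .
$$
Second, I will use that $\langle\eta\rangle$ is the zero-cycle of isolated zeros of the rational $1$-form $\eta$ in the sense of Rudakov--Shafarevich \cite{RS0}. It is therefore an effective zero-cycle, so its degree is a sum of nonnegative local lengths and $\deg\langle\eta\rangle\geq 0$. This forces $24-3n\geq 0$, that is, $n\leq 8$.

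By Corollary \ref{6,9} the only possibilities are $n=6$ or $n=9$, and $n=9$ would give $\deg\langle\eta\rangle=-3<0$. Hence $n=6$, and substituting back yields $\deg\langle\eta\rangle=24-18=6$.

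The only point needing care is the nonnegativity of $\deg\langle\eta\rangle$. This should be justified from the local definition. Write $\eta$ locally as $\eta = g\,(a\,dx+b\,dy)$, where $g$ is a local equation of the divisorial part $(\eta)$ and $a,b$ have no common divisorial factor. Then $\langle\eta\rangle$ is supported where $a=b=0$, with multiplicity $\dim_k \mathcal{O}_{X',P}/(a,b)\geq 0$ at each point $P$. No other obstacle is expected, since everything else is substitution into Lemma \ref{<eta>}.
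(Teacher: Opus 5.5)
Your proposal is correct and follows the paper's proof: with $B=0$ the formula $\deg\langle\eta\rangle = B^2+24-3n$ becomes $24-3n$, nonnegativity gives $n\leq 8$, and Corollary \ref{6,9} then forces $n=6$ and $\deg\langle\eta\rangle=6$. The only addition is your local justification that $\langle\eta\rangle$ is an effective zero-cycle, which the paper uses without comment.
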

\begin{proof}
By Lemma \ref{<eta>}, we have $\deg \langle \eta \rangle = 24 -3n \geq 0$.
Therefore, we have $8 \geq n$. From Corollary \ref{6,9} we deduce $n= 6$ as stated.
Therefore, we have $\deg \langle \eta \rangle = 6$.
\end{proof}

\begin{lemma}\label{singularity}
Assume $B = 0$. Then the possible types of isolated singularities of $Y$  are as follows:
$$
6 A_2, E_6 + 3A_2, 2E_6, E_8 + 2A_2, \text{an elliptic singularity}.
$$
\end{lemma}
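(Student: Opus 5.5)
The plan is to work locally over the zeros of $\eta$. Once $B=0$, the only singularities of $Y'$ (equivalently, after contracting the curves $\tilde C_i,\tilde C_i',\tilde E_i$, of $Z'$) lie over the isolated zeros of $\eta$. By Lemma \ref{6} these have total degree $\deg\langle\eta\rangle=6$. Since $\eta=df_i/f_i$, near such a point $P$ we have $f_i=u$ with $u$ a unit. Away from the curves $\bar C_i,\bar C'_i,E_i$ the normalization $\mu'$ is an isomorphism, so near $P$ the surface $Y$ is given by $z^3=f$. After subtracting a cube (which does not change $df$), we may write $f=g(x,y)$ with $g(P)=0$. Locally the covering is $z^3=g(x,y)$, and the isolated singularity of $Y$ over $P$ has the zero of the $1$-form $dg$ as its only invariant input. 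Its local degree is $m_P=\dim k[[x,y]]/(g_x,g_y)$.

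First, I would normalize $g$ modulo cubes and modulo the ideal generated by $g_x,g_y$, in the style of Rudakov--Shafarevich and the known classification of $\mu_p$/$\alpha_p$-type purely inseparable covers. This yields normal forms according to $m_P$:
\begin{itemize}
\item $m_P=1$ gives $g=xy$, hence $z^3=xy$, an $A_2$ point;
\item $m_P=3$ gives $g=x^2+y^4$ up to cubes (modulo cubes the leading terms can be made to have exponents prime to $3$), hence $z^3=x^2+y^4$, an $E_6$ point;
\item $m_P=4$ gives $z^3=x^2+y^5$, an $E_8$ point.
\end{itemize}
I would then check the remaining degrees. For $m_P=2$, $g$ has a degenerate quadratic part $x^2$ plus a higher term whose exponents are forced into the form $y^{3k}$ modulo cubes, so no isolated zero of degree exactly $2$ occurs. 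For $m_P\ge 5$, the singularity $z^3=g$ is no longer a rational double point; the expected case is a simple elliptic or minimally elliptic point such as $z^3=x^2+y^7$ or $z^3=x^3y+\dots$. The rational double point cases are identified by computing the resolution graph via weighted blow-ups, or by Artin's list in characteristic $3$.

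Second, I would add the global input. By Lemma \ref{3h}, $\omega_{Z'}^{\otimes 3}$ is trivial on the resolution up to exceptional curves. Using Proposition \ref{chi} with $n=6$ (so $\chi(\mathcal O_Y)=-10$), together with the change of $\chi$ under the normalization $\mu'$ computed from the conductor $\mathcal O_Y(-C)$, I would compute $\chi(\mathcal O_{Z'})$. Comparing this with the minimal resolution $\tilde Z$ via $\chi(\mathcal O_{\tilde Z})=\chi(\mathcal O_{Z'})-\sum_P p_g(P)$ should show that the total geometric genus of the singularities is at most $1$. It follows that at most one non-rational singularity occurs, and it has $p_g=1$, i.e.\ it is elliptic. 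In that case I would also show that it absorbs all of $\deg\langle\eta\rangle$.

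Finally, combining the two steps: the partitions of $6$ into parts from $\{1,3,4\}$ are $1^6$, $3+1^3$, $3+3$ and $4+1+1$. These give $6A_2$, $E_6+3A_2$, $2E_6$ and $E_8+2A_2$. All other configurations are excluded except the single elliptic singularity.

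I expect the main obstacle to be the local classification in characteristic $3$: showing that the degree $m_P$ determines the singularity type ($1\mapsto A_2$, $3\mapsto E_6$, $4\mapsto E_8$), and that $m_P=2$ cannot occur. If the genus count is not sharp enough to exclude a mixture of an elliptic point with $A_2$ points, I would instead use Lemma \ref{3h} and the resulting canonical divisor on $\tilde Z$ to force $K_{\tilde Z}$ to be supported on a single elliptic cycle.
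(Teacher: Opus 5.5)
Your local set-up is the paper's: reduce to $z^3=f$ at a zero of $\eta$, discard cubes, and use $m_P=\dim k[[x,y]]/(f_x,f_y)$ with $\sum_P m_P=6$. Your global step is sound, and it supplies something the paper only asserts. For $n=6$ one finds $(f\circ\mu')_*\mathcal O_{Y'}\cong\mathcal O_{X'}\oplus\mathcal L^{-1}\oplus\mathcal L^{-2}(\sum_i\bar C'_i)$, hence $\chi(\mathcal O_{Z'})=2$. Lemma~\ref{3h} and $q=0$ give $\chi(\mathcal O_{\tilde Z})\in\{1,2\}$. So $\sum_P p_g(P)\le 1$, and any non-rational point is minimally elliptic.

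The gap is the sentence ``I would also show that it absorbs all of $\deg\langle\eta\rangle$''; nothing in your plan does this. The genus count cannot distinguish a single elliptic point with $m_P=6$ from an elliptic point with $m_P=5$ plus one $A_2$ point, since both have $\sum_P p_g(P)=1$. Your fallback via $K_{\tilde Z}$ cannot see $A_2$ points at all, because they are crepant. What is needed is a local argument excluding a non-rational point with $m_P=5$. Your Step~1 only treats a degenerate quadratic part. There indeed $f\sim x^2+y^k$ with $3\nmid k$ and $m_P=k-1$, so $m_P\neq 2,5$ for that shape. It says nothing about a cubic leading term. (Also, $z^3=x^3y+\cdots$ is not a candidate: leading degree $\ge 4$ already forces $m_P\ge 9>6$.)

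The paper's proof uses no genus count. It claims that a cubic leading term makes $k[[x,y]]/(f_x,f_y)$ infinite-dimensional, so the quadratic part is $xy$ or $x^2$. That gives the normal forms $xy$, $x^2+y^4$, $x^2+y^5$, $x^2+y^7$ with $m_P=1,3,4,6$, and the list is just the partitions of $6$ into these parts.

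Be aware, however, that this exclusion only inspects leading forms and does not hold as written. In characteristic $3$, for a cubic part $ax^2y+bxy^2$, the leading forms $y(2ax+by)$ and $x(ax+2by)$ of $f_x,f_y$ always share the factor $ax+2by$, so $m_P\ge 5$. But $m_P=5$ does occur. For example, take $f=x^2y+y^4$. Then $(f_x,f_y)=(2xy,\,x^2+y^3)$ has colength $3+2=5$ (note $y^4=yf_y-2xf_x$). Moreover $z^3=x^2y+y^4$ is a hypersurface triple point, hence not rational.

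So the configuration you were worried about, an elliptic point with $m_P=5$ together with an $A_2$ point, is excluded neither by your genus count nor by the paper's argument. To finish, you would need a genuinely new (presumably global) input ruling it out, or else the list has to be enlarged by this case.
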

\begin{proof}
Let $P$ be an isolated singularity on $Y$. We may assume $Y$ is defined by
$z_i^3 = f_i$. Consider the completion $\hat{O}_{f(P)}$ 
of the local ring $O_{f(P)}$ at $f(P)$. Then we have $\hat{O}_{f(P)}\cong k[[x, y]]$
for suitable variables $x$, $y$. Since $P$ is a singular point, $f_i$
begins with terms of degree at least 2 in $k[[x, y]]$.
Moreover, any term which is a perfect cube can be absorbed by $z_i^3$. 
Therefore, we may assume that $f_i$ contains no terms of exponent divisible by 3.
By Lemma \ref{6}, we have $\dim k[[x, y]]/(f_{i,x}, f_{i,y})\leq 6$.

If the lowest degree of $f_i$ is greater than 3, then it is clear that 
$\dim k[[x, y]]/(f_{i,x}, f_{i,y})\geq 8$. 
If the lowest degree of $f_i$ is equal to 3, then, without loss of generality,
we may assume that the degree-3 term of $f_i$ is either $x^2y$ or $x^2y + xy^2$.
In both cases, the elements $y^i$ ($i = 0, 1, 2, \ldots$) remain linearly independent
in $\dim k[[x, y]]/(f_{i,x}, f_{i,y})$. Therefore, we conclude that
the degree of the lowest term of $f_i$ must be 2.

Therefore, after a suitable change of coordinates, 
we have two cases:
\begin{enumerate}
\item[$({\rm I})$] $f_i = xy + \mbox{higher terms}$,
\item[$({\rm II})$] $f_i = x^2 + \mbox{higher terms}$.
\end{enumerate}
In case (I), 
the singular point $P$
defined by $z_i^3 = xy + \mbox{higher terms}$ is a rational singularity of type $A_2$,
and $\dim k[[x, y]]/(f_{i,x}, f_{i,y})= 1$. In case (II),
since $k[[x, y]]/(f_{i,x}, f_{i,y})\leq 6$, after a suitable change of 
the system of local parameters of $k[[x, y]]$,
only the following three cases are possible:
\begin{enumerate}
\item[$({\rm i})$] $f_i = x^2 + y^4$,
\item[$({\rm ii})$] $f_i = x^2 + y^5$,
\item[$({\rm iii})$] $f_i = x^2 + y^7$.
\end{enumerate}
In case (i) (resp. (ii), resp. (iii)), the singularity is of type $E_6$ 
(resp. $E_8$, resp. an elliptic singularity),
and $\dim k[[x, y]]/(f_{i,x}, f_{i,y})= 3$ (resp. $4$, resp. $6$).
The lemma then follows from these results, since $\deg \langle \eta \rangle = 6$.
\end{proof}

\section{Resolution}
\label{s:resolution}

Let $\mu : \tilde{Y}\longrightarrow Y'$ be the normalization of $Y'$ and  $\nu :\tilde{\tilde{Y}} \longrightarrow \tilde{Y}$ be a resolution
of singularities of the surface $\tilde{Y}$.
We have a diagram 
$$
\begin{array}{ccccccccc}
 \tilde{\tilde{Y}} &\stackrel{\nu}{\longrightarrow}& \tilde{Y} &\stackrel{\mu}{\longrightarrow}&
  Y' &\stackrel{{\mu}'}{\longrightarrow}& Y&  & \\
      & &  & & \quad \downarrow h &   & \quad \downarrow f &  & \\
      & & & &  Z'& &X' &\stackrel{\varphi}{\longrightarrow} & X 
\end{array}
$$
By Artin \cite[(3.3)]{Artin}, we have the injective morphism between dualizing sheaves:
\begin{equation}\label{nu}
   \nu_{*}{\omega}_{\tilde{\tilde{Y}}} \hookrightarrow {\omega}_{\tilde{Y}}.
\end{equation}

We set ${\rm Ann}(\mu_*{\mathcal O}_{\tilde{Y}}/{\mathcal O}_{Y'})= {\mathcal I}$.
We see that if $B \neq 0$, then $Y'$ has non-isolated singularities. Therefore, we have
${\mathcal I}\neq 0$.
As before, we have
\begin{equation}\label{omegatildeY}
\begin{array}{rl}
   \mu_*\omega_{\tilde{Y}}& \cong {\mathcal Hom}(\mu_*{\mathcal O}_{\tilde{Y}}, \omega_{Y'})
   \cong {\mathcal Hom}(\mu_*{\mathcal O}_{\tilde{Y}}, {\mathcal O}_{Y'})\otimes \omega_{Y'}\\
   &\cong{\mathcal I}\otimes \omega_{Y'}\otimes \mu_{*}{\mathcal O}_{\tilde{Y}}
   \cong \mu_{*}\mu^{*}({\mathcal I}\otimes \omega_{Y'}).
\end{array}
\end{equation}
Therefore, we have
\begin{equation}\label{tildeY}
   \omega_{\tilde{Y}} \cong \mu^*({\mathcal I}\otimes \omega_{Y'}).
\end{equation}

\begin{lemma}\label{tildeYco}
If $B \neq 0$, then
${\rm H}^0(\tilde{Y}, \omega_{\tilde{Y}}) ={\rm H}^0(\tilde{Y}, \omega_{\tilde{Y}}^{\otimes 2})= 0$.
\end{lemma}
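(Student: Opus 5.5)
The idea is to push everything down to $X'$, where the sections can be read off from the splitting $f_*\mathcal O_Y\cong\mathcal O_{X'}\oplus\mathcal L^{-1}\oplus\mathcal L^{-2}$. By \eqref{tildeY}, $\omega_{\tilde Y}\cong\mu^*(\mathcal I\otimes\omega_{Y'})$. Since $\mu$ is finite and birational, \eqref{omegatildeY} identifies $\mu_*\omega_{\tilde Y}$ with the subsheaf $\mathcal I\cdot\omega_{Y'}\subset\omega_{Y'}$. Hence
$$
{\rm H}^0(\tilde Y,\omega_{\tilde Y})\subset {\rm H}^0(Y',\omega_{Y'}),
$$
and these are exactly the sections vanishing along the conductor. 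In the same way, ${\rm H}^0(\tilde Y,\omega_{\tilde Y}^{\otimes 2})$ injects into ${\rm H}^0(Y',\omega_{Y'}^{\otimes 2})$ as the sections lying in $\mathcal I^2\omega_{Y'}^{\otimes2}$. It therefore suffices to show two things: these spaces of sections on $Y'$ are at most one-dimensional and spanned by a nowhere-vanishing section on the relevant locus, and that section does not lie in the conductor when $B\neq0$.

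First I compute ${\rm H}^0(Y',\omega_{Y'})$. By Lemma \ref{Y''C}, $\mu'_*\omega_{Y'}\subset\omega_Y$, so its sections sit inside ${\rm H}^0(Y,\omega_Y)$. By the proof of Corollary \ref{cohomologyY}, ${\rm H}^0(Y,\omega_Y)\cong k$, generated by the summand $\mathcal O_{X'}(\sum E_i)$ in \eqref{eq:g_*omega} (the other summands have no sections by Lemma \ref{L}). Thus ${\rm H}^0(Y',\omega_{Y'})$ has dimension at most $1$. The squares are handled analogously. By \eqref{3}, $\omega_{Y'}^{\otimes3}$ is effective, supported on the exceptional curves $\tilde C_i,\tilde C'_i,\tilde E_i$, which lie away from $B$. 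A section $s$ of $\omega_{Y'}^{\otimes2}$ with $s\in\mathcal I^2\omega_{Y'}^{\otimes 2}$ therefore gives $s^3\in{\rm H}^0(\omega_{Y'}^{\otimes 6})={\rm H}^0(\mathcal O_{Y'}(2D))$, where $D=\sum(6\tilde C_i+6\tilde C'_i+3\tilde E_i)$ is a negative-definite exceptional configuration. That $H^0$ is just the constants. Moreover $s^3$ must vanish to order $\ge 6$ along the preimage of $B$.

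The key step is then this. Because $\mathcal I\neq 0$ and the non-normal locus of $Y'$ is exactly the codimension-one set lying over $B$ (as recorded before \eqref{eq:eta}), the conductor defines a nonempty curve in $Y'$. Any section of $\omega_{Y'}^{\otimes m}$ ($m=1,2$) lying in $\mathcal I^m\omega_{Y'}^{\otimes m}$ has its cube vanishing along the preimage of $B$. But that cube is a section of $\mathcal O_{Y'}(mD/3\cdot 3)$, a sheaf whose only sections are constant multiples of the canonical section, and that section is nonzero away from $\operatorname{supp}D$, hence nonzero over $B$. So the section is $0$, giving both vanishings. The same argument can be run on $\tilde Y$ directly via $\omega_{\tilde Y}^{\otimes 3}\cong\mu^*(\mathcal I^3)\otimes\mu^*\mathcal O_{Y'}(D)$.

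The main obstacle is making precise that $\mathcal I$ really imposes vanishing along a curve meeting the open set where the canonical section of $\mathcal O_{Y'}(D)$ is nonvanishing. This needs $\mu^*\mathcal I$ to be an ideal sheaf of a nonzero effective divisor on $\tilde Y$ supported over $B$. I would check it locally at a general point of $B$, using the explicit equation $z_i^3=f_i$ with $df_i/f_i$ vanishing along $B$. Such a point is a non-normal point of $Y'$ of codimension one, so the conductor is a proper ideal there.
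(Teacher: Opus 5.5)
Your proposal is correct and follows essentially the paper's argument: \eqref{3} exhibits a power of the canonical sheaf as $\mathcal O$ of an effective divisor supported on the negative-definite configuration $\tilde C_i,\tilde C'_i,\tilde E_i$, so the only sections are constant multiples of the canonical one, and the nontrivial conductor, supported over $B$ and hence away from that configuration, forces the constant to vanish. The paper runs this on $\tilde Y$ with $\mu^*\omega_{Y'}^{\otimes 6}$ and the cycle $\mathcal Z$ cut out by $\mu^*\mathcal I$, deducing the cases of $\omega_{\tilde Y}$ and $\omega_{\tilde Y}^{\otimes 2}$ from the sixth power, whereas you descend to $Y'$ via $\mu_*\omega_{\tilde Y}\cong\mathcal I\,\omega_{Y'}$ and take cubes; your preliminary bound $h^0(Y',\omega_{Y'})\le 1$ is not needed.
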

\begin{proof}
First we show ${\rm H}^0(\tilde{Y}, \omega_{\tilde{Y}}^{\otimes 6})= 0$. 
By (\ref{3}), we have 
$$
\mu^{*}\omega_{Y'}^{\otimes 6}\cong {\mathcal O}_{\tilde{Y}}(\sum_{i=1}^n (12\mu^{-1}(\tilde{C}_i) + 12\mu^{-1}(\tilde{C'_i}) +6\mu^{-1}(\tilde{E}_i))).
$$
Since $\mu^{-1}(\tilde{C}_i)$, $\mu^{-1}(\tilde{C'_i})$ and $\mu^{-1}(\tilde{E}_i)$ generate 
a negative definite lattice in ${\rm NS}(\tilde{Y})$,
we see
${\rm H}^0(\tilde{Y}, \mu^{*}\omega_{Y'}^{\otimes 6})$ is one-dimensional and
it consists of constants. Since $B\neq 0$, we see $\mu^{*}{\mathcal I} \neq (0)$ and 
the cycle ${\mathcal Z}$
defined by $\mu^{*}{\mathcal I}$ doesn't meet 
the divisor $\mu^{-1}(\tilde{C}_i) + \mu^{-1}(\tilde{C'_i}) +\mu^{-1}(\tilde{E}_i)$.
Therefore, any element $\theta$ in ${\rm H}^0(\tilde{Y}, \mu^*({\mathcal I}\otimes \omega_{Y''})^{\otimes 6})$ 
which is a constant must have zero along
${\mathcal Z}$. This means $\theta = 0$. Therefore, by (\ref{tildeY})
we have ${\rm H}^0(\tilde{Y}, \omega_{\tilde{Y}}^{\otimes 6})
={\rm H}^0(\tilde{Y}, \mu^*({\mathcal I}\otimes \omega_{Y'})^{\otimes 6})=0$. 
Hence, we have 
${\rm H}^0(\tilde{Y}, \omega_{\tilde{Y}}) =0$ and 
${\rm H}^0(\tilde{Y}, \omega_{\tilde{Y}}^{\otimes 2})= 0$.
\end{proof}



We are now ready to prove the key result in the way to  Theorems \ref{thm6}, \ref{thm9}:

\begin{theorem}
\label{thm:B}
\begin{enumerate}
\item[({\rm i})] If $B = 0$ and all singularities of $Y'$ are isolated 
rational singularities, then
$p_g({\tilde{\tilde{Y}}}) =P_2({\tilde{\tilde{Y}}}) = 1$, and we have $n=6$ and
$\deg \langle \eta \rangle = 6$. Moreover,
${\tilde{\tilde{Y}}}$ is birationally equivalent to a K3 surface.
\item[({\rm ii})] If $B \neq 0$ or $Y'$ has an elliptic singularity,
then $p_g({\tilde{\tilde{Y}}}) =P_2({\tilde{\tilde{Y}}}) = 0$. In this case, 
${\tilde{\tilde{Y}}}$ is a rational surface.
\end{enumerate}
\end{theorem}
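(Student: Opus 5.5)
The plan is to compute the plurigenera of $\tilde{\tilde{Y}}$ through the dualizing sheaves along the tower $\tilde{\tilde{Y}}\to\tilde{Y}\to Y'\to Y$. In every case the conclusion about the birational type will come from the Enriques--Bombieri--Mumford classification, combined with the fact that $\tilde{\tilde{Y}}$ is dominated by $X'^{(1/3)}$. The latter holds because $\tilde{\tilde{Y}}\to X'$ is purely inseparable of degree $3$, so some Frobenius factors through it. This domination will give $q(\tilde{\tilde{Y}})=0$ (as in Lemma \ref{q}). It also shows that $\tilde{\tilde{Y}}$ is not of general type and that its Kodaira dimension is at most $0$, since $\kappa$ can only drop under dominant rational maps from a K3 surface.

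For (ii), first suppose $B\neq 0$. Lemma \ref{tildeYco} gives $p_g(\tilde{Y})=P_2(\tilde{Y})=0$. By \eqref{nu}, $\nu_*\omega_{\tilde{\tilde{Y}}}^{\otimes m}$ injects into $(\omega_{\tilde Y}^{\otimes m})^{\vee\vee}$ for $m=1,2$. For $m=2$ I would argue via Artin's result on normal surfaces, or simply redo the proof of Lemma \ref{tildeYco}: any section of $\omega_{\tilde{\tilde{Y}}}^{\otimes 6}$ pushes down to a section that is constant by \eqref{3} and vanishes along the conductor $\mathcal Z$. This yields $p_g(\tilde{\tilde{Y}})=P_2(\tilde{\tilde{Y}})=0$. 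With $q=0$, Castelnuovo's criterion (valid in characteristic $p$ by Zariski) shows that $\tilde{\tilde{Y}}$ is rational.

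Next suppose $B=0$ but $Y'$ has an elliptic singularity. Then $Y'$ is normal, so $\tilde{Y}=Y'$. Lemma \ref{3h} together with \eqref{3} shows that $\omega_{Y'}^{\otimes 6}$ has only constant sections, and these correspond to a $2$-form that is nowhere vanishing away from the exceptional configuration. At an elliptic (non-rational) singularity, however, a local generator of $\omega_{Y'}$ does not extend holomorphically to the resolution: the fundamental cycle appears with negative coefficient in $K_{\tilde{\tilde Y}}-\nu^*K_{Y'}$. Consequently the unique candidate section of $\omega_{Y'}^{\otimes m}$ for $m=1,2$ has a pole on $\tilde{\tilde{Y}}$. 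This gives $p_g=P_2=0$, and Castelnuovo's criterion again gives rationality.

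For (i), Lemma \ref{6} already yields $n=6$ and $\deg\langle\eta\rangle=6$. Since $B=0$, $Y'$ has only isolated singularities, so it is normal and $\tilde Y=Y'$. Because these singularities are rational, they are rational double points of the types in Lemma \ref{singularity}, and $\nu^*\omega_{Y'}=\omega_{\tilde{\tilde{Y}}}$. I then compute on $Y'$, where $\omega_{Y'}=h^*\omega_{Z'}\otimes\mathcal O(\sum(2\tilde C_i+2\tilde C_i'+\tilde E_i))$ by \eqref{Y'Z}. Since the added divisor is exceptional for $h$, one gets $p_g(Y')=h^0(\omega_{Z'})$ for $Z'$. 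Corollary \ref{cohomologyY} gives $h^0(\omega_Y)=1$, and Lemma \ref{Y''C} should show that this section survives in $\mu'_*\omega_{Y'}$, so $p_g\geq1$. Together with $\omega_{Z'}^{\otimes 3}$ trivial (Lemma \ref{3h}), this forces $\omega_{Z'}\cong\mathcal O$, hence $p_g=P_2=1$. Then $\kappa=0$, $q=0$ and $p_g=1$, so the minimal model (namely $Z'$ with its RDPs resolved) is a K3 surface.

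The main obstacle is precisely the claim $h^0(\omega_{Z'})\ge1$, i.e.\ that $\omega_{Z'}$ is trivial rather than a nontrivial $3$-torsion bundle. To settle it, I would try to track the generator of $H^0(Y,\omega_Y)$ through Lemma \ref{Y''C} and check that it does not vanish on the conductor divisor $C$; alternatively, I would use $\chi(\mathcal O_{Z'})$ computed via Noether's formula and the Euler number, ruling out the Enriques-type cases.
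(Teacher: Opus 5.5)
Your treatment of (ii) follows the paper. For $B\neq 0$, the constant section of $\omega^{\otimes 6}$ coming from \eqref{3} must vanish on the conductor. For an elliptic singularity, the Hidaka--Watanabe discrepancy $K_{\tilde{\tilde Y}}=\nu^*K_{\tilde Y}-\sum r_iG_i$ with some $r_i>0$ kills that section. Then $q=0$ and Castelnuovo give rationality. The gap is in (i), at exactly the step you flag yourself: you never prove $p_g\geq 1$. So the inference ``together with $\omega_{Z'}^{\otimes 3}$ trivial, this forces $\omega_{Z'}\cong\mathcal O$'' is unsupported, and nothing you write excludes that the canonical class is a nontrivial $3$-torsion class. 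Your route (a) is only asserted (``should show''), and route (b) is not carried out, so (i) is not established as written.

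The missing idea is that no tracking of sections is needed: the classification settles it, and this is how the paper concludes. Because the singularities are rational double points, $\omega_{\tilde{\tilde Y}}\cong\nu^*\omega_{Y'}$. Contracting the configurations $\nu^{-1}(\tilde C_i\cup\tilde C_i'\cup\tilde E_i)$ then gives a smooth surface $Z$ with $\omega_Z^{\otimes 3}\cong\mathcal O_Z$, by \eqref{3} and the analogue of \eqref{Y'Z}. Hence $Z$ is minimal with $\kappa(Z)=0$. Lemma \ref{q} gives $q(Z)=0$ in the Albanese sense, i.e.\ $b_1=0$, which leaves only K3 and Enriques surfaces. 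In characteristic $3$ an Enriques surface has $\omega_Z$ of order exactly $2$, so $\omega_Z^{\otimes 3}\cong\omega_Z\not\cong\mathcal O_Z$. Therefore $Z$ is K3 and $p_g=P_2=1$.

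Your route (b) can also be finished without quoting the classification. If $K_Z\not\sim 0$, then $p_g=0$ and Noether gives $12\chi(\mathcal O_Z)=2+b_2>0$, forcing $\chi(\mathcal O_Z)=1$. Riemann--Roch then gives $h^0(2K_Z)\geq 1-h^0(-K_Z)=1$, since $-K_Z$ is numerically but not linearly trivial. So $2K_Z\sim 0$ and $K_Z\sim 3K_Z-2K_Z\sim 0$, a contradiction.

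Separately, drop the claim that being dominated by $X'^{(1/3)}$ forces $\kappa(\tilde{\tilde Y})\leq 0$. Under inseparable maps pluricanonical forms can pull back to zero, and Kodaira dimension is not monotone: unirational K3 surfaces and Zariski surfaces of general type are both dominated by $\bbP^2$. Only $q=0$ survives from the domination; fortunately you never actually use the bound on $\kappa$.
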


\begin{proof}
Since $Y$ is defined by a single equation in a line bundle over $X'$, any isolated
singularity on $Y$ is Gorenstein and normal. 

Assume that $B = 0$ and 
all singularities of $Y'$ are isolated rational singularities. Then we have 
$\tilde{Y} = Y'$ and any isolated singularity
of $\tilde{Y}$ is locally isomorphic to the corresponding isolated singularity of $Y$. 
Therefore, they are Gorenstein and normal. Therefore, the dualizing sheaf $\omega_{\tilde{Y}}$
of $\tilde{Y}$ is an invertible sheaf. Moreover, by Lemma \ref{6}
we have $n= 6$, $\deg \langle \eta \rangle = 6$. 
Since we have $\tilde{Y} = Y'$, we identify $\tilde{Y}$ with $Y'$. 
By (\ref{3}) we have
$$
   \omega_{\tilde{Y}}^{\otimes 3} \cong {\mathcal O}_{\tilde{Y}}(\sum_{i=1}^n (6\tilde{C}_i + 6\tilde{C'_i} +3E_i)).
$$
If the singularities of $\tilde{Y}$ are all isolated and rational, then we have
$$
      \omega_{\tilde{\tilde{Y}}} \cong \nu^*\omega_{\tilde{Y}}.
$$
Therefore, we have 
\begin{equation}\label{3tildetildeY}
\omega_{\tilde{\tilde{Y}}}^{\otimes 3} \cong 
{\mathcal O}_{\tilde{\tilde{Y}}}(\sum_{i=1}^n (6\nu^{-1}(\tilde{C}_i) + 6\nu^{-1}(\tilde{C'_i}) +3\nu^{-1}(E_i))).
\end{equation}
We can blow-down the divisors $\nu^{-1}(\tilde{C}_i)$, $\nu^{-1}(\tilde{C'_i})$ and 
$\nu^{-1}(E_i)$
$(i= 1, 2, \ldots, n)$ successively, and we denote by $Z$ the obtained surface as in the case of $Z'$.
We denote the blow-down by $\psi$:
$$
        \psi : \tilde{\tilde{Y}}  \longrightarrow Z.
$$
Then we have 
$$
\omega_{\tilde{\tilde{Y}}} \cong 
\psi^*(\omega_{Z})\otimes {\mathcal O}_{\tilde{\tilde{Y}}}(\sum_{i=1}^n(2\nu^{-1}(\tilde{C}_i) + 2\nu^{-1}(\tilde{C'_i}) +\nu^{-1}(E_i))).
$$
Using (\ref{3tildetildeY}), we have
$$
     \psi^{*}\omega_{Z}^{\otimes 3} \cong {\mathcal O}_{\tilde{\tilde{Y}}}.
$$
Since $\psi^{*}$ on ${\rm Pic}(Z)$ is injective, we see $\omega_{}^{\otimes 3} \cong {\mathcal O}_{Z}$. Therefore, the Kodaira dimension of $Z$
 is $\kappa (Z) = 0$. 
Since $Z$ is birationally equivalent to $Y$, by Lemma \ref{q} we have $q(Z)= 0$. 
By the Enriques--Kodaira classification,
$Z$ is either a K3 surface or an Enriques surface. 
Since Enriques surfaces satisfy $\omega_Z^{\otimes 3}\not\cong\mathcal O_Z$ 
outside characteristic $2$,
we conclude that $Z$ is a K3 surface and $\omega_{Z}\cong {\mathcal O}_{Z}$.
\smallskip

Now, we assume $B \neq 0$. By (\ref{nu}) we have an injection
$$
   {\rm H}^0(\tilde{\tilde{Y}}, \omega_{\tilde{\tilde{Y}}}^{\otimes 2}) \hookrightarrow 
   {\rm H}^0({\tilde{Y}}, \omega_{\tilde{Y}}^{\otimes 2}).
$$
By Lemma \ref{tildeYco}, we have ${\rm H}^0(\tilde{\tilde{Y}}, \omega_{\tilde{\tilde{Y}}}^{\otimes 2}) = 0$. 
In the same way as above, we have $q(\tilde{\tilde{Y}}) = 0$.
Therefore, by Castelnuovo's criterion of rationality, we see that $\tilde{\tilde{Y}}$ is rational.

\smallskip

Finally, assume that $B=0$ and that $Y'$ has an elliptic singularity.
Note that in this case we have $\tilde{Y} = Y'$.
For the resolution $\nu : \tilde{\tilde{Y}} \longrightarrow \tilde{Y}$,
we denote by $\bigcup_{i = 1}^{k}G_i$ the support of the exceptional divisors
of $\nu$, where each $G_i$ is an irreducible component.
By Hidaka--Watanabe \cite[Lemma 1.1]{HW}, we have
$$
 \omega_{\tilde{\tilde{Y}}} \sim \mu^{*}\omega_{\tilde{Y}} - \sum_{i= 1}^{k}r_iG_i \quad (r_i \geq 0).
$$
Therefore,  
$$
\begin{array}{rl}
\omega_{\tilde{\tilde{Y}}}^{\otimes 6}& \sim \mu^{*}\omega_{\tilde{Y}}^{\otimes 6} - \sum_{i= 1}^{k}6r_iG_i \quad (r_i \geq 0)\\
    &  \sim {\mathcal O}_{\tilde{\tilde{Y}}}(\{\sum_{i=1}^n (\mu\circ \nu)^{*}(12\tilde{C}_i + 12\tilde{C'_i} +6\tilde{E}_i)- \sum_{i= 1}^{k}6r_iG_i \}).
\end{array}
$$
The exceptional divisors $G_i$ do not intersect $(\mu\circ \nu)^{*}\tilde{C}_j$,
$(\mu\circ \nu)^{*}(\tilde{C'_j})$ or  $(\mu\circ \nu)^{*}\tilde{E}_j$ for any $j = 1, 2, \ldots, k$.
Since $\tilde{Y}$ has a non-rational isolated singular point, there exists some $i$ with $r_i > 0$
by Hidaka--Watanabe \cite{HW}. Moreover, as in Section \ref{Normalization},
we can contract the curves $(\mu\circ \nu)^{*}\tilde{C}_j$,
$(\mu\circ \nu)^{*}\tilde{C'_j}$ and $(\mu\circ \nu)^{*}\tilde{E}_j$ onto a nonsingular surface.
It follows that ${\rm H}^0(\tilde{\tilde{Y}}, \omega_{\tilde{\tilde{Y}}}^{\otimes 6}) = 0$.
Hence, 
$$
   p_g(\tilde{\tilde{Y}}) = \dim {\rm H}^0(\tilde{\tilde{Y}}, \omega_{\tilde{\tilde{Y}}}) = 0, P_2(\tilde{\tilde{Y}})={\rm H}^0(\tilde{\tilde{Y}}, \omega_{\tilde{\tilde{Y}}}^{\otimes 2}) = 0.
$$
Since we also have $q(\tilde{\tilde{Y}})=0$ as shown above, we conclude by Castelnuovo's criterion of rationality
that $\tilde{\tilde{Y}}$ is rational.
\end{proof}

\begin{corollary}\label{n=9}
If $n = 9$, then $X$ is a supersingular K3 surface.
\end{corollary}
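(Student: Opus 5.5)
When $n=9$, I plan to combine Theorem \ref{thm:B} with the purely inseparable structure of the cover. The argument has three steps.

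\emph{Step 1: the cover is rational.} By Lemma \ref{6}, the hypothesis $B=0$ forces $n=6$. So for $n=9$ we must have $B\neq 0$. Theorem \ref{thm:B}(ii) then says that $\tilde{\tilde{Y}}$ is a rational surface.

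\emph{Step 2: $X$ is unirational.} Write $Y\to X'\to X$ for the composite. It is a purely inseparable morphism of degree $3$, obtained by adjoining cube roots. Hence the function field extension satisfies
$$k(X)\subset k(Y)\subset k(X)^{1/3}.$$
The field $k(\tilde{\tilde{Y}})=k(Y)$ is purely transcendental over $k$. So we obtain a dominant rational map from the rational surface $\tilde{\tilde{Y}}$ onto $X$, and $X$ is unirational.

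\emph{Step 3: unirational implies supersingular.} Here I would invoke the standard fact, due to Shioda, that a unirational surface has $\rho=b_2$. For a K3 surface this means $\rho(X)=22$, i.e.\ $X$ is (Shioda-)supersingular. Alternatively, I could use Artin's observation that unirational K3 surfaces have infinite height, which by Artin's theorem again gives $\rho=22$.

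\emph{Main obstacle.} The delicate point is ensuring Step 1 really applies. Theorem \ref{thm:B}(ii) is stated for $B\neq 0$, and the nonvanishing of $B$ comes only from Lemma \ref{6}. Lemma \ref{6} in turn rests on $\deg\langle\eta\rangle\ge 0$ via the Igusa formula in Lemma \ref{<eta>}. The remaining implications are standard citations.
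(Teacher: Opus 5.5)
Your proposal is correct and follows the paper's proof: Lemma~\ref{6} forces $B\neq 0$ when $n=9$, Theorem~\ref{thm:B}(ii) makes the resolved triple cover rational, the inclusion $k(X)\subset k(Y)$ then makes $X$ unirational, and unirationality gives supersingularity via Shioda's $\rho=b_2$. You only spell out the function-field step and the citation, which the paper leaves implicit.
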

\begin{proof} 
Lemma \ref{6} implies that $B\neq 0$. 
Hence $X$ is unirational by Theorem \ref{thm:B}. Therefore, $X$ is supersingular.
\end{proof}

\begin{summary}
\label{summary}
With Theorem \ref{thm:B} in place,
we have derived the structural results \textit{(ii)}
of Theorems \ref{thm6}, \ref{thm9}.
Note that this is quite contrary to the complex case
where the cover always is either K3 or abelian.
\end{summary}

\section{First examples}
\label{s:ex}

In this section, we give some examples of K3 surfaces $X$  which have 
interesting divisors divisible by $3$
in ${\rm Pic}(X)$.
In fact, our examples will be independent of the characteristic and are fairly standard.

\begin{example}[$n=6$ on double sextics]
\label{ex:Shimada}
Let $f,g\in k[x,y,z]$ be homogeneous polynomials of degree $3$ resp.\ $2$
and consider the double cover of $\bbP^2$ defined by
\[
w^2 + wf = g^3.
\]
Generally, this has 6 isolated $A_2$ singularities whose resolution is a K3 surface $X$.
As recorded in \cite{Shimada}, their configuration is $3$-divisible,
effectively given by the two pre-images of the curve given by $g=0$ in $\bbP^2$. 
\end{example}

\begin{example}[$n=9$ from genus one fibration with three $\IV^*$]
\label{ex:3E_6}
We consider a genus one fibration on a  K3 surface $\varphi : X \longrightarrow {\bf P}^1$
defined by
$$
        y^2 + t^2 (t-1)^2 y = x^3.
$$ 
Then this has 3 singular fibres of Kodaira type $\IV^*$ at $t=0,1,\infty$.
%
%
Express them as
$F_i =3 C_0^{(i)} + 2\sum_{j= 1}^{3}C_j'^{(i)} + \sum_{j= 1}^3C_j^{(i)}$ $(i = 1, 2, 3)$
and  take a general fiber $F$.
We set $D = \sum_{i = 1}^{3}\sum_{j= 1}^{3}(2C_j'^{(i)} + C_j^{(i)})$. Then we have
$$
    D \sim 3(F - \sum_{i=1}^{3}C_0^{(i)}).
$$
Therefore, $D$ is divisible by 3 in ${\rm Pic}(X)$. This gives an example with $n=9$ in any characteristic
(which is quasi-elliptic, with another reducible fibres of type $\IV$ at $t=-1$, in characteristic $3$, and otherwise elliptic).
\end{example}

\begin{example}[$n=6$ from elliptic fibration with two $\IV^*$]
\label{ex:2E_6}
Let $X$ be a K3 surface admitting a genus one fibration admitting (at least) two fibres of type $\IV^*$.
Writing them as above, we infer that
\[
D = \sum_{j= 1}^{3}(2C_j'^{(1)} + C_j^{(1)}) - \sum_{j= 1}^{3}(2C_j'^{(2)} + C_j^{(2)}) \sim 3(C_0^{(2)}-C_0^{(1)}).
\]
Adding $3\sum_{j= 1}^{3}(C_j'^{(2)} + C_j^{(2)})$, we obtain that
\[
D' = \sum_{j= 1}^{3}(2C_j'^{(1)} + C_j^{(1)}) + \sum_{j= 1}^{3}(C_j'^{(2)} + 2C_j^{(2)})
\]
is 3-divisible in ${\rm Pic}(X)$.
\end{example}

%

\begin{example}[$n=6$ from elliptic fibration with $3$-torsion section]
\label{ex:P}
Consider a jacobian elliptic K3 surface $X$ with a 3-torsion section $P$.
In general, $X$ admits 6 singular fibres of type $\I_3$ 
which are met by $P$ in a different component than by the zero section $O$.
Then the theory of Mordell--Weil lattices \cite{SS} gives a 3-divisible $6A_2$ configuration as follows:
say that $O$ meets $\Theta_0^{(i)}$, $P$ meets $\Theta_1^{(i)}$, and $-P$ meets $\Theta_2^{(i)}$.
Denoting a fibre by $F$, we know the linear equivalence
\[
P-O-2F \sim - \frac 13 \sum_{i=1}^6 (2\Theta_1^{(i)}+\Theta_2^{(i)}),
\]
showing exactly what we want.
\end{example}

\begin{remark}
On quasi-elliptic fibrations, also $n=9$ can be realized by way of suitable $3$-torsion sections,
see Corollary \ref{m}.
\end{remark}

\begin{remark}
Example \ref{ex:2E_6} can be obtained from Example \ref{ex:P}
by (separably)
quotienting by the automorphism given fibrewise by translation by $P$.
We leave it to the reader to work out the inseparable triple covers given by the $3$-divisible $6A_2$ configurations.
\end{remark}

%

\section{Proof of Theorem \ref{thm6}}
\label{s:pf6}

\subsection{Proof of Theorem \ref{thm6} (ii)} 
This follows from Theorem \ref{thm:B},
as recorded in Summary \ref{summary}.
\qed

\subsection{Proof of Theorem \ref{thm6} (i)} 
\label{ss:pf-i}

Combining the $6A_2$ configuration with an ample class, we infer that $\rho(X)\geq13$.
Since finite height $h$ implies $\rho\leq 22-2h$, we deduce that $h\leq 4$ as claimed.

The  $6A_2$ configuration generates a rank 12 sublattice  $L\subset\NS(X)$ of determinant $81$,
due to the assumed $3$-divisibility, i.e.\ $L$ contains $6A_2$ as an index 3 sublattice.
If $X$ is supersingular, then the finite index sublattice decomposition
\begin{eqnarray}
\label{eq:L}
L \oplus L^\perp\subseteq\NS(X)
\end{eqnarray}
has the following implication for the 3-lengths $l$ of the discriminant groups $A_L=L^\vee/L$ etc:
\begin{eqnarray}
\label{eq:l}
2\sigma(X) = l(A_{\NS(X)}) \leq l(A_L) + l(A_{L^\perp}) \leq 4 + \rank(L^\perp) = 14.
\end{eqnarray}
Here, we use the fact that $\NS(X)$ is 3-elementary, 
more precisely 
$$A_{\NS(X)}=\NS(X)^\vee/\NS(X)\cong(\bbZ/3\bbZ)^{2\sigma}.
$$
%
Hence $\sigma\leq 7$ as stated.
\qed

\subsection{Proof of Theorem \ref{thm6} (iii)} 
\label{ss:pf-iii}

The existence statement for finite height, with  $\rho=13$,
can be deduced from Example \ref{ex:Shimada}, as recorded in the following characteristic-free lemma:

\begin{lemma}
\label{lem:7-d}
Consider the K3 surfaces arising from Example \ref{ex:Shimada}.
These form a 7-dimensional family whose very general member $X$ has $\rho(X)=13$.
\end{lemma}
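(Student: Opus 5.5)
Two things have to be shown: that the surfaces of Example \ref{ex:Shimada} depend on exactly 7 moduli, and that the very general one has $\rho(X)=13$, i.e.\ that the lower bound $\rho(X)\geq 13$ recorded in \S\ref{ss:pf-i} is attained.

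\emph{Dimension count.} The data $(f,g)$ consist of a ternary cubic and a ternary quadric, giving $10+6=16$ parameters. The equation $w^2+wf=g^3$ is preserved by the following operations, and I will quotient by them:
\begin{itemize}
\item $w\mapsto w+h$ with $h$ a cubic, which changes $f$ to $f+2h$ (for $\mathrm{char}\neq 2$) and $g^3$ by $h^2+hf$;
\item rescalings of $w$, $f$ and $g$;
\item $\mathrm{PGL}_3$ acting on $\bbP^2$, which has dimension 8.
\end{itemize}
The cleaner way to do the count is to note that completing the square exhibits the branch sextic as $f^2+4g^3$. The surfaces are therefore exactly the double planes branched along sextics of the form $f^2+4g^3$. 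Such a sextic has $A_2$ cusps at the six points $f=g=0$, and these are precisely the sextics of torus type $(2,3)$.

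The space of pairs $(f,g)$ up to the scalings $(f,g)\mapsto(\lambda^3 f,\lambda^2 g)$ has dimension $16-1=15$. The map from this space to the space of sextics is generically finite, since a general sextic of torus type admits only finitely many torus decompositions. Subtracting $\dim \mathrm{PGL}_3=8$ gives $15-8=7$. In characteristic $2$ (and for a characteristic-free argument in general) I would instead count the parameters in the normal form $w^2+wf=g^3$ directly, modulo the same groups, and should reach the same number. Finally, I need the general member to actually have six $A_2$ singularities and no worse ones. This is verified at a single explicit example, and genericity then gives it for the very general member.

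\emph{Picard number.} The lower bound $\rho\geq 13$ comes from the $6A_2$ configuration together with the pullback of a line. For the upper bound I would use the standard moduli argument. K3 surfaces with $\NS(X)$ containing a fixed lattice $N$ of rank $r$ and signature $(1,r-1)$ form a family of dimension $20-r$ (lattice-polarized moduli; in positive characteristic one uses deformation theory of lattice-polarized K3 surfaces, or lifts to characteristic $0$). With $r=13$ this gives a family of dimension exactly 7. If the very general member of our 7-dimensional family had $\rho\geq 14$, then the whole family would lie in a countable union of $6$-dimensional loci, which is a contradiction, provided the family has 7 effective moduli. That proviso is exactly what the dimension count establishes, once we know the period (or moduli) map restricted to the family is generically finite.

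\emph{Main obstacle.} The hardest step is this finiteness: showing that the 7 parameters are genuinely moduli of the K3 surfaces, and not merely of the plane models. I would handle it by noting that the polarization of degree 2 is determined up to finitely many choices by the lattice data. Consequently, isomorphic K3 surfaces yield projectively equivalent sextics up to finite ambiguity.

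For positive characteristic, including characteristic $3$, I would pass to characteristic $0$. I would lift a very general member together with its polarization and configuration, then use that $\rho$ can only jump under specialization. This also requires checking that the reduction of the characteristic-$0$ family remains 7-dimensional. Alternatively, I would exhibit one explicit member with $\rho=13$, e.g.\ by point counting over a finite field of good characteristic combined with Artin–Tate or an Ekedahl–van Geemen type argument, and conclude via semicontinuity of $\rho$ in the family.
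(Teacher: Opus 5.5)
\textbf{Verdict: there is a genuine gap, in positive characteristic.}

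Your dimension count is a reasonable hands-on replacement for the paper's citation of Shimada and Degtyarev. Over $\bbC$, your Picard number argument is the standard one. The problem is positive characteristic, which is exactly the case the paper needs: characteristic $3$, for Theorem \ref{thm6}(iii).

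In characteristic $p$ it is false that K3 surfaces whose N\'eron--Severi lattice contains a fixed rank $r$ lattice form families of dimension $20-r$. Supersingular K3 surfaces have $\rho=22$, and those of Artin invariant at most $\sigma$ form $(\sigma-1)$-dimensional families, up to dimension $9$. Deformation theory only shows that each line bundle imposes at most one condition. So the relevant loci have dimension exactly $20-r$ at surfaces of finite height, but components consisting of supersingular surfaces can be larger. Your argument therefore yields only a dichotomy:
\begin{itemize}
\item either the very general member has $\rho=13$,
\item or the whole $7$-dimensional family is supersingular, which forces $\sigma\geq 8$ for its very general member.
\end{itemize}
You never exclude the second alternative.

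Neither of your fallbacks closes this.
\begin{itemize}
\item \emph{Lifting to characteristic $0$} gives the inequality in the wrong direction. Specialization embeds $\NS$ of the geometric generic fibre into $\NS$ of the special fibre. So a Picard number computed in characteristic $0$ is a lower bound for the characteristic $p$ surface, not an upper bound. Moreover, a supersingular surface cannot be lifted together with all of its $\NS$.
\item \emph{Point counting} cannot supply a member with $\rho=13$ in characteristic $p$, because a K3 surface over $\overline{\mathbb F}_p$ always has even Picard number. It can certify $\rho=13$ only in characteristic $0$ (for example via van Luijk's two-prime method), and that again does not transfer down to characteristic $p$.
\end{itemize}

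The missing ingredient, which the paper uses, is a lattice-theoretic bound on $\sigma$. If $X$ is supersingular, then $\NS(X)$ is $p$-elementary with $l(A_{\NS(X)})=2\sigma$. Consider the sublattice $M=\langle 2\rangle\oplus L$, where $\langle 2\rangle$ is spanned by the pull-back of a line and $L$ is the index $3$ overlattice of $6A_2$. Comparing $M\oplus M^\perp\subseteq\NS(X)$ gives $2\sigma\leq l(A_M)+\rank(M^\perp)$. In characteristic $3$ the right-hand side is $4+9=13$, so $\sigma\leq 6<8$, which rules out the supersingular alternative. Even $L$ alone gives $\sigma\leq 7$, which already suffices. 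The analogous bound works in the other characteristics as well.
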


\begin{proof}
The dimension of the family was recorded in \cite[Remark 6.3]{Shimada} for the case of the complex numbers,
building on \cite{Degtyarev},
but the statement remains valid in any characteristic.
It follows from the moduli theory of lattice polarized K3 surfaces
that the very general member  $X$ either has $\rho(X)=13$ or, in characteristic $p$,
$X$ is supersingular with $\sigma(X)\geq 8$.
In either case, in the notation of \ref{ss:pf-i}, we have a primitive embedding
\[
M = A_1^- \oplus L \hookrightarrow \NS(X)
\]
(which is an isometry if $\rho(X)=13$).
To rule out the supersingular case, we consider the orthogonal complement $M^\perp$
analogous to \eqref{eq:L} to find, as in \eqref{eq:l},
\[
2\sigma \leq l(A_M) + \rank(M^\perp) =
\begin{cases} 1 + 9 = 10 & p=2\\
4 + 9 =13 & p=3\\
9 & p>3.
\end{cases} 
\]
Hence the lemma follows in any characteristic (and in fact $\sigma\leq 6$ in characteristic $3$).
\end{proof}

For the supersingular case, 
let $\Lambda_\sigma$ denote the N\'eron--Severi lattice of a supersingular K3 surface
of Artin invariant $\sigma$ in characteristic $3$.
This is unique up to isometry (cf. Nikulin \cite{Nikulin} or 
Rudakov-Shafarevich \cite[Theorem, p.\ 1477]{RS1}).
For $\sigma=7$, it  can be expressed as
\begin{eqnarray}
\label{eq:sigma=7}
\Lambda_7 \cong U(3) \oplus L \oplus E_8(3)
\end{eqnarray}
where $U$ denotes the hyperbolic plane and the entries in brackets indicate that the intersection form is scaled by the entry
while $L\supset 6A_2$ is the index 3 overlattice from \ref{ss:pf-i}
which is generated by the usual 3-divisible element in $6A_2$
(as in Example \ref{ex:P}).

A primitive isotropic vector $E$ from the summand $U(3)$ defines a genus one fibration on $X$
which is elliptic 
because the root sublattice $R=6A_2$ of $E^\perp/\bbZ E = L \oplus E_8(3)$ has rank $12$ only.
In fact, as the root sublattice  encodes the reducible fibres by \cite[Lemma 2.2]{Kondo-elliptic},
we infer from $R=6A_2$ that there are 6 fibres of types $\I_3$ or $\IV$,
and the divisibility yielding $L$ provides us exactly with the desired $6A_2$ configuration given by suitable fibre components.
More precisely, there may be reflections involved in converting $E$ to a fibre class 
(possibly after changing sign) and the standard generators of the $A_2$ summands to 
fibre components (thus smooth rational curves), 
but any reflection preserves the divisibility property, of course.

For smaller $\sigma\leq 6$, one can start with $\Lambda_6\cong U \oplus L \oplus E_8(3)$, arguing exactly as above,
or consider the following hierarchy of lattices 
which give genus one fibrations featuring two fibres of type $\IV^*$ each:
$$
\begin{array}{l|c}
\sigma & \Lambda_\sigma\\
\hline
6 & U(3) \oplus 2E_6 \oplus E_8(3)\\
5 & U \oplus 2E_6 \oplus E_8(3)\\
4 &
U(3) \oplus 2E_6 \oplus 4 A_2\\
3
& 
U \oplus 2E_6 \oplus 4 A_2\\
2 &
U(3) \oplus L' \oplus A_2\\
1 & 
U \oplus L' \oplus A_2
\end{array}
$$
Here $L'\supset3E_6$ is the standard index 3 overlattice with the same set of roots
(as generated by the $3$-torsion section in Example \ref{ex:3E_6}),
and the first two fibrations are elliptic while the last 4 are quasi-elliptic
(the last one is given by Example \ref{ex:3E_6}).
Since every fibration admits at least two fibres of type $\IV^*$,
it supports a 3-divisible $6A_2$ configuration by Example \ref{ex:2E_6}.
This completes the proof of Theorem \ref{thm6} (iii).
\qed

\section{Proof of Theorem \ref{thm9}}
\label{s:pf9}

\subsection{Proof of Theorem \ref{thm9} (ii)} 
Again, this follows from Theorem \ref{thm:B}, cf.\ Summary \ref{summary}.
\qed

\subsection{Proof of Theorem \ref{thm9} (i)} 
\label{ss:pf9-i}

By Corollary \ref{n=9}, $X$ is supersingular.
Let $L$ denote the primitive closure of $9A_2\subset\NS(X)$,
a rank 18 lattice of determinant of absolute value at most $3^7$.
Arguing with the analogue of \eqref{eq:L},
we find  along the same lines that
\[
2\sigma = l(A_{NS(X)}) \leq l(A_L) + l(A_{L^\perp}) \leq 7 + \rank(L^\perp) = 11.
\]
Hence $\sigma\leq 5$ as stated.
\qed

\subsection{Proof of Theorem \ref{thm9} (iii)} 

For the converse direction, we argue with the index $3$ overlattice $L\supset 9A_2$
obtained by dividing a standard vector with $n=9$ by 3 as in \eqref{eq:3M}
Then the following representations give the claim:
$$
\begin{array}{l|c}
\sigma & \Lambda_\sigma\\
\hline
5 & U(3) \oplus L \oplus A_2\\
4 &
U \oplus L \oplus A_2\\
3
& 
U(3) \oplus 3E_6 \oplus A_2\\
2 &
U \oplus 3E_6 \oplus A_2\\
1 & 
U \oplus L' \oplus A_2
\end{array}
$$
Here we use Example \ref{ex:3E_6} for $\sigma\leq 3$,
and $L'\supset 3E_6$ again denotes the index 3 overlattice with same root sublattice 
(as in \ref{ss:pf-iii}).
\qed

\subsection{Comments of indivisible $A_2^9$ configurations}


We should like to highlight that the decomposition for Artin invariant $\sigma=6$,
\[
\Lambda_6 \cong U(3) + 10A_2,
\]
shows along the same lines as above that there can be $A_2^9$ configurations
on supersingular K3 surfaces in characteristic $3$ which involve no divisibility at all.
This is in stark contrast to any other characteristic where $3$-divisibility is automatic.
To see this, one can argue similarly to the proof of Lemma \ref{lem:7-d}.

In fact, indivisible $A_2^9$ configurations in characteristic $3$ may only occur on 
supersingular K3 surfaces (of Artin invariant $\sigma\leq 6$).
Otherwise, we could lift the K3 surface of finite height 
together with its entire $\NS$ to characteristic zero
whence the divisibility would follow from the above sketch.


\section{Quasi-elliptic surfaces with 10 singular fibers of type ${\rm IV}$}
\label{s:n=9}

\subsection{Structure}
\label{structure}

In this subsection, we consider the case where the quasi-elliptic surface 
$$\varphi : X \longrightarrow {\bf P}^1$$
has 10 singular fibers
of type ${\rm IV}$.
For $i= 1, 2, \ldots, 10$ and $j = 1, 2, 3$, let $E_j^{(i)}$ 
denote the irreducible components of the $i$-th singular fiber,
so that $E_1^{(i)}$, $E_2^{(i)}$, $E_3^{(i)}$ together form this fiber. 
Since $\rank {\rm NS}(X) = 20$, there are no other reducible fibers.
We denote by $F$ a general fiber and by $\xi$ the cusp locus of the quasi-elliptic fibration.
$\xi$ is a nonsingular rational curve, 
and the restriction morphism $\varphi\vert_{\xi} : \xi \longrightarrow {\bf P}^1$
is purely inseparable of degree 3 (Bombieri-Mumford \cite{BM}).
Moreover, $\xi$ passes through the common intersection point of $E_1^{(i)}$, $E_2^{(i)}$ and $E_3^{(i)}$
for each $i$ (Ito \cite{I1}).

\begin{theorem} 
\label{thm:H}
The divisor $H=\xi + F$ is very ample, and the surface $X$ is embedded in
${\mathbf P}^3$  as a quartic surface.
\end{theorem}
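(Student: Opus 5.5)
We prove very ampleness through the standard criteria for linear systems on K3 surfaces (Saint-Donat, valid in any characteristic). First, the numerical data. Since $\varphi|_\xi$ is purely inseparable of degree $3$, we have $\xi\cdot F=3$ and $\xi^2=-2$. Hence
\[
H^2=\xi^2+2\,\xi\cdot F+F^2=-2+6+0=4 .
\]
Riemann--Roch then gives $h^0(H)\ge 2+H^2/2=4$. We first show that $H$ is nef: $H\cdot\xi=1$, $H\cdot F=3$, and every fibre component satisfies $H\cdot E_j^{(i)}=\xi\cdot E_j^{(i)}=1$, because $\xi$ passes transversally through the common point of the three components of a type $\IV$ fibre. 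Any other irreducible curve meets $H$ nonnegatively. So $H$ is nef and big with $H^2=4$, and $h^1(H)=0$, $h^0(H)=4$. This gives a morphism $X\to\bbP^3$ as soon as $|H|$ is base-point free.

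Next, we check Saint-Donat's conditions in turn.

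\textbf{No fixed components.} By Saint-Donat, a nef and big $H$ has a fixed component only if $H\sim aE+\Gamma$ with $E$ elliptic, $\Gamma$ a $(-2)$-curve and $E\cdot\Gamma=1$. Here $|F|$ is a pencil with $H\cdot F=3$, so this case would force $H\cdot E=1$ for some elliptic (genus one) class $E$.

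\textbf{Not hyperelliptic.} Hyperellipticity would require either an elliptic $E$ with $H\cdot E=2$, or $H\sim 2B$ with $B^2=2$.

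\textbf{Separation for very ampleness.} Finally, very ampleness requires that no $(-2)$-curve $\Gamma$ has $H\cdot\Gamma=0$, since such a curve would be contracted.

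The key step, and the one we expect to be the main obstacle, is excluding classes $E$ with $E^2=0$, $E$ nef, and $H\cdot E\in\{1,2\}$. Write $E\cdot F=a$ and $E\cdot\xi=b$, so that $H\cdot E=a+b$.

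If $a=0$, then $E$ is a multiple of $F$ and $H\cdot E\ge 3$, which is too large.

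So $a\ge 1$, and $E$ restricts to a multisection of degree $a$. We would use the explicit lattice $\NS(X)$, of rank $20$, generated by $F$, $\xi$ (as a trisection) and the components $E_j^{(i)}$. The Mordell--Weil group of the quasi-elliptic fibration is $(\bbZ/3)^n$ for suitable $n$. Any divisor is then $E=\alpha F+\beta\xi+\sum c_{ij}E_j^{(i)}+(\text{sections})$. We would compute $E^2$ under the constraint $a+b\le2$ and show that the orthogonal projection onto the negative definite part $10A_2$ (plus the torsion-section contributions) forces $E^2<0$. Concretely, from $H\cdot E\le 2$ and Hodge index we get $E^2\le (H\cdot E)^2/H^2\le 1$. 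Then $E^2=0$ forces $E$ to be numerically close to a combination of $H$ and root classes, and we check directly that no such combination is isotropic and nef.

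The $(-2)$-curves orthogonal to $H$ are easier. Fibre components have $H$-degree $1$, sections $s$ have $H\cdot s=\xi\cdot s+1\ge1$, and any other $(-2)$-curve is horizontal, so $H\cdot\Gamma\ge F\cdot\Gamma>0$. Likewise $H\ne 2B$, because $H\cdot\xi=1$ is odd.

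With all conditions verified, $|H|$ embeds $X$ as a surface of degree $H^2=4$ in $\bbP^3$, that is, as a quartic.
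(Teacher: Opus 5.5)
\textbf{Gap: the key step is not proved.} Your reduction is sound and essentially matches the paper's. You show $H$ is nef with $H^2=4$, that $H\cdot\Gamma>0$ for every curve, and that $H\neq 2B$ by parity of $H\cdot\xi$. Via Saint-Donat, everything then hinges on excluding an irreducible curve $E$ with $p_a(E)=1$ and $H\cdot E\in\{1,2\}$. The paper differs only slightly here: it proves base point freeness directly, by restricting $|H|$ to a fibre and using $h^1(\mathcal{O}_X(\xi))=0$, so that only $H\cdot E=2$ is needed in Saint-Donat's Theorem 5.2.

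That exclusion is exactly what your proposal does not establish:
\begin{itemize}
\item The Hodge index bound $E^2\le (H\cdot E)^2/H^2\le 1$ is vacuous for $E^2=0$, so it rules out nothing.
\item The ``explicit lattice'' you want to compute in is not available. The theorem does not assume a section, and it is used later precisely in the section-free case. So there is no Mordell--Weil description of $\NS(X)$; it is only some overlattice of $\langle\xi,F\rangle\oplus 10A_2$ whose index depends on the Artin invariant.
\item ``We check directly that no such combination is isotropic and nef'' is a promise, not an argument.
\end{itemize}

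\textbf{The missing idea.} Let $E$ be an irreducible curve with $p_a(E)=1$ and $H\cdot E\le 2$. Put $a=E\cdot F$ and $b=E\cdot\xi$; then $b\ge 0$ since $E\neq\xi$, and $a+b\le 2$. You already handled $a=0$, so $a\ge 1$.
\begin{itemize}
\item If $a=1$, then $E$ is a section of $\varphi$, hence $E\cong\mathbb{P}^1$, contradicting $p_a(E)=1$. This disposes of $H\cdot E=1$ and of the fixed-component case.
\item So $a=2$ and $b=0$, i.e.\ $E$ is a bisection disjoint from $\xi$. Since $E\cdot(E_1^{(i)}+E_2^{(i)}+E_3^{(i)})=2$, each of the ten fibres of type IV has a component disjoint from $E$.
\item These components, together with $\xi$ (which meets each of them), form a connected configuration disjoint from $E$. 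Hence they lie in a single fibre of the genus one fibration $|E|$.
\item In that fibre the component $\xi$ would meet at least ten other components. This is impossible by the Kodaira--Tate classification, where a component meets at most four others.
\end{itemize}
This is the paper's argument, and it is what your proof needs in place of the unexecuted lattice computation.
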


\begin{proof}
Let $P$, $Q$ be two points on different fibers. Then, there exists a section 
$s \in {\rm H}^0(X, {\mathcal O}_X(F))\subset {\rm H}^0(X, {\mathcal O}_X(\xi + F))$
such that $s(P) = 0$ and $s(Q) \neq 0$. Suppose that $P$ and $Q$ lie on the same fiber $F_0$.
Since $\xi + F$ is linearly equivalent to $\xi + F_0$, we may assume that $F = F_0$.
Consider the exact sequence
$$
 0 \rightarrow {\mathcal O}_X(-F) \longrightarrow {\mathcal O}_X \longrightarrow {\mathcal O}_F \rightarrow 0.
$$
Tensoring with ${\mathcal O}_X(\xi + F)$, we obtain an exact sequence
$$
 0 \rightarrow {\mathcal O}_X(\xi) \longrightarrow {\mathcal O}_X(\xi + F) \longrightarrow {\mathcal O}_F(\xi + F) \rightarrow 0.
$$
Since $\xi^2 = -2$, we have ${\rm H}^0(X, {\mathcal O}_X(\xi)) \cong k$. By the Riemann-Roch theorem, we have ${\rm H}^1(X, {\mathcal O}_X(\xi)) \cong 0$.
Therefore, the natural homomorphism ${\rm H}^0(X, {\mathcal O}_X(\xi + F)) \longrightarrow {\rm H}^0(F, {\mathcal O}_F(\xi + F))$ is surjective.
By the adjunction formula, the dualizing sheaf of $F$ is trivial. By $\deg {\mathcal O}_F(\xi + F)) = F\cdot(\xi + F) = 3$, we see that
the invertible sheaf ${\mathcal O}_F(\xi + F)$ is very ample on $F$. Hence, there exists a section $s \in {\rm H}^0(X, {\mathcal O}_X(\xi + F))$
such that $s(P) = 0$ and $s(Q) \neq 0$. It follows that the linear system $\vert \xi + F\vert$ is base point free, and 
the associated morphism $\psi_{\vert \xi + F\vert}$ is injective.

Therefore we can apply \cite[Thm.\ 5.2]{SD}
to deduce that $H$ is very ample.
To this end, it suffices to check that $H.C\geq 3$ for any irreducible curve $C\subset X$ with $p_a(C)=1$.
Indeed, if  $C$ is a fibre of $\varphi$, then $H.C=\xi.C=3$ as required.
Otherwise, $C$ is a multisection of $\varphi$, different from $\xi$ and not  a section, 
since either would be smooth rational. If $C$ has index $C.F\geq 3$, then
$H.C\geq F.C\geq 3$ as desired. Thus only the case of a bisection remains
(whence the fibration $\varphi$ admits a section).
More precisely, for $H$ not to be very ample, the bisection $C$ would have to be disjoint from $\xi$
(so that $H.C=F.C=2$). But then the genus one fibration induced by $|C|$ would have a fibre
containing not only $\xi$, but also all components of the reducible fibres of $\varphi$
which are not met by $C$. As this amounts to at least one component per $\IV$ fibre,
there would be at least ten branches attached to $\xi$ which is absurd in view of Kodaira's and Tate's classification
of singular fibres.
Hence $C$ cannot be a bisection and $H.C\geq 3$ follows as required.
\end{proof}

Let $H$ be a general hyperplane in ${\bf P}^3$. Then we have $\xi \cdot H = \xi\cdot (\xi + F) = 1$. Hence the image of $\xi$
in ${\bf P}^3$ is a line. Similarly, $E_j^{(i)} \cdot H = E_j^{(i)} \cdot (\xi + F) = 1$ and therefore the image of $E_j^{(i)}$ in ${\bf P}^3$ is also a line.
Fix a component  $E_j^{(i)}$. Then $E_j^{(i)}$ intersects $\xi$ transversely. Let $H$ be the hyperplane containing both $\xi$  and $E_j^{(i)}$.
Then $H \cap X$ is linearly equivalent to $\xi + F$. Since $\xi + E_j^{(i)} \subset H \cap X$, we obtain $H \cap X = \xi + \sum_{j=1}^3E_j^{(i)}$.
Let $H_0$ and $H_1$ be two distinct hyperplanes containing $\xi$. We consider the pencil generated by $H_0$ and $H_1$, 
namely  $tH_0 + sH_1$.
Summarizing the above discussion, we have the following proposition.
\begin{proposition}
The pencil $tH_0 + sH_1$ induces the quasi-elliptic fiberation $\varphi : X \longrightarrow {\bf P}^1$.
\end{proposition}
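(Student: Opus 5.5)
We want to show that the pencil of planes through the line $\xi$ cuts out, on the quartic $X\subset\bbP^3$ embedded by $|H|=|\xi+F|$ (Theorem \ref{thm:H}), exactly the fibres of $\varphi$. The plan is to compare two linear systems: the residual system of the pencil on $X$, and the fibre system $|F|$.

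First, identify the residual system. Since $\xi$ is mapped isomorphically onto a line, every plane $tH_0+sH_1$ containing this line cuts $X$ in a divisor of $|\xi+F|$ that contains $\xi$. We will check that $\xi$ appears in $H\cap X$ with multiplicity exactly one for a general member of the pencil. If $\xi$ were contained in every member with multiplicity at least $2$, then $H-2\xi=F-\xi$ would be effective. This is impossible because $(F-\xi)\cdot F=-3<0$ while $F$ is nef. Hence the residual divisors $(tH_0+sH_1)\cap X-\xi$ form a pencil inside $|H-\xi|=|F|$.

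Second, compare this pencil with $|F|$. Because $h^0(\mathcal O_X(F))=2$, the system $|F|$ is itself a pencil, namely the one defining $\varphi$. The restriction map from the pencil of planes to $|F|$ is an injective linear map between $2$-dimensional spaces of sections: two distinct planes cannot cut out the same divisor on the nondegenerate surface $X$. It is therefore an isomorphism. Consequently, for $[t:s]\in\bbP^1$ we get $(tH_0+sH_1)\cap X=\xi+F_{[t:s]}$, where $F_{[t:s]}$ runs through all fibres of $\varphi$. The residual map $X\dashrightarrow\bbP^1$, given by projection from the line $\xi$, coincides with $\varphi$ up to an automorphism of $\bbP^1$. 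It extends across $\xi$ since $|F|$ has no base points.

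Finally, for consistency with the preceding discussion, we note that the special members are exactly the planes spanned by $\xi$ and a line $E_j^{(i)}$. These are the ten planes with $H\cap X=\xi+\sum_jE_j^{(i)}$, that is, the members that contain the $\IV$ fibres.

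The only delicate step is the multiplicity check for $\xi$ together with the identification $H^0(\mathcal O_X(H-\xi))\cong H^0(\mathcal O_X(F))$ of dimension $2$. We intend to settle both by the intersection-number argument above, combined with Riemann--Roch and $h^2(\mathcal O_X(F))=0$ on the K3 surface $X$.
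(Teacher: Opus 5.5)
Your argument is correct and is essentially the paper's. A plane through the line $\xi$ cuts out $\xi$ plus an effective divisor in $|H-\xi|=|F|$, hence a fibre of $\varphi$; the paper spells this out only for the planes spanned by $\xi$ and some $E_j^{(i)}$ before summarizing, and your identification of the $2$-dimensional space of linear forms vanishing on $\xi$ with $H^0(X,\mathcal O_X(F))$ makes explicit the step the paper leaves implicit (the multiplicity check is automatic, since every member of $|F|$ is a fibre while $\xi$ is horizontal).
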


\begin{remark}
In the case of the Fermat quartic surface, the structure of quasi-elliptic fibration is described in \cite{K}
and in \cite{RS-112}.
In this case, the Mordell-Weil group is isomorphic to $({\bf Z}/3{\bf Z})^4$.
\end{remark}

\subsection{Quasi-elliptic surfaces with a section}
\label{a section}

Let $\varphi : X \longrightarrow {\bf P}^1$ be a quasi-elliptic fibration 
of a K3 surface with $\ell$
singular fibers of type ${\rm IV}^*$ $(\ell = 1, 2 ~\mbox{or}~3)$ and
with $10 - 3\ell$ singular fibers of type ${\rm IV}$.
For $i= 1, \ldots, \ell$, $j = 1, 2, 3$ and $k = 1, 2$, let $C_{jk}^{(i)}$ and $C^{(i)}$
denote the irreducible components of the singular fibers of type ${\rm IV}^*$, so that
$$
   \sum_{j=1}^{3}C^{(i)}_{j1} +  2\sum_{j=1}^{3}C^{(i)}_{j2} + 3 C^{(i)}\quad (i = 1, \hdots, \ell)
$$
are the singular fibers of type ${\rm IV}^*$ with $C^{(i)}_{j1}\cdot C^{(i)}_{j2}= C^{(i)}_{j2}\cdot C^{(i)} =1$
and all other intersection numbers zero.
For $i= 1, 2, \ldots, 10 -3 \ell$ and $j = 1, 2, 3$, let $E_j^{(i)}$ 
denote the irreducible components of the singular fibers of type ${\rm IV}$,
so that $E_1^{(i)}$, $E_2^{(i)}$, $E_3^{(i)}$ together form the $i$-th singular fiber  as before. 

We denote by $F$ a general fiber and by $\xi$ the cusp locus of the quasi-elliptic fibration, as before.
In this subsection, assume that the quasi-elliptic surface 
has a section $s$ and $s$ intersects $E^{(i)}_{1}$ $(i = 1, \ldots , 10 - \ell)$
and $C^{(i)}_{11}$ $(i = 1,  \ldots, \ell)$. 

\begin{theorem}
In ${\rm NS}(X)\otimes {\bf Q}$, we have 
$$
\xi = -\sum_{i = 1}^{10 -3\ell}(E^{(i)}_{2}+ E^{(i)}_{3}) + 
\sum_{i=1}^{\ell}(2C^{(i)}_{11} + C^{(i)}_{12}) + 3s + (6 - 2\ell)F.
$$
\end{theorem}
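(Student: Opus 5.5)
We will argue in $\NS(X)\otimes{\bf Q}$ by comparing intersection numbers. (In the hypothesis, the section $s$ should meet $E^{(i)}_1$ for $i=1,\ldots,10-3\ell$; we read it that way.)

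\emph{Step 1: a basis.} The classes
\[
s,\quad F,\quad C^{(i)}_{jk},\ C^{(i)}\ \ (j\neq 1 \text{ or } k\neq 1),\quad E^{(i)}_2,\ E^{(i)}_3
\]
number $2+6\ell+2(10-3\ell)=22$. They span a sublattice of finite index in $\NS(X)$: it is $U\oplus E_6^{\ell}\oplus A_2^{10-3\ell}$, which has rank $22$ and is nondegenerate, and $X$ is supersingular of Picard number $22$. Consequently an element of $\NS(X)\otimes{\bf Q}$ is uniquely determined by its intersection numbers with these generators. We may equally test against all fibre components together with $s$ and $F$.

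\emph{Step 2: intersection numbers of $\xi$.} We have $\xi\cdot F=3$, since $\varphi|_\xi$ is purely inseparable of degree $3$.

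For a fibre of type $\IV$, $\xi$ passes through the common point of $E^{(i)}_1,E^{(i)}_2,E^{(i)}_3$ (Ito). Because the total intersection with the fibre is $3$, we get $\xi\cdot E^{(i)}_j=1$ for each $j$.

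For a fibre of type $\IV^*$, $\xi$ must meet components with total weighted intersection $3$. It cannot meet a simple component transversally at a smooth point of the fibre, since there $\varphi$ is smooth while $\xi$ meets the fibre only in cusps. Hence it meets the component of multiplicity $3$ once: $\xi\cdot C^{(i)}=1$ and $\xi\cdot C^{(i)}_{jk}=0$. This is the step I expect to need the most care; the fallback is Ito's description of the cusp locus on $\IV^*$ fibres.

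Finally $\xi\cdot s=0$. On smooth (cuspidal) fibres, $s$ passes through a smooth point while $\xi$ passes through the cusp. On reducible fibres, $s$ meets a simple component away from the point where $\xi$ hits.

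\emph{Step 3: verification.} Let $D$ be the right-hand side of the claimed identity. Using $s^2=-2$, $F^2=0$, $s\cdot F=1$, $s\cdot E^{(i)}_1=s\cdot C^{(i)}_{11}=1$ and the intersection matrices of $A_2$ and $E_6$, we check the following.
\begin{itemize}
\item $D\cdot F=3$.
\item $D\cdot E^{(i)}_2=-(-2+1)=1$, and likewise $D\cdot E^{(i)}_3=1$.
\item $D\cdot E^{(i)}_1=-2+3=1$.
\item $D\cdot C^{(i)}_{11}=2(-2)+1+3=0$, $D\cdot C^{(i)}_{12}=2-2=0$, $D\cdot C^{(i)}=1$, and $D$ is $0$ on the other arms.
\item $D\cdot s=2\ell-6+(6-2\ell)=0$.
\end{itemize}
These agree with Step 2 on a spanning set of $\NS(X)\otimes{\bf Q}$, so $\xi=D$ there. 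As a consistency check, one can also confirm $D^2=-2$.
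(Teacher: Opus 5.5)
Your route is the paper's: you pin down $\xi$ in $\NS(X)\otimes{\bf Q}$ by its intersection numbers with the fibre components, $s$ and $F$. The one difference is how the $F$-coefficient is fixed. The paper uses $\xi^2=-2$, while you use $\xi\cdot s=0$ and keep $\xi^2=-2$ as a check. Your justification of $\xi\cdot s=0$ is fine, since $s$ lies in the smooth locus of $\varphi$ and $\xi$ lies in the non-smooth locus. Your Step 3 arithmetic is correct, and $10-3\ell$ is indeed the intended range in the hypothesis.

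The step that does not follow as written is the $\IV^*$ case. Excluding smooth points of the fibre on the simple components does not yet give $\xi\cdot C^{(i)}=1$. The point $P=C^{(i)}_{j1}\cap C^{(i)}_{j2}$ is itself a non-smooth point of the fibre. If $\xi$ passed through it transversally to both curves, you would get $\xi\cdot C^{(i)}_{j1}=\xi\cdot C^{(i)}_{j2}=1$, again with weighted total $1+2=3$. For $\ell=3$, a configuration of this kind in all three $\IV^*$ fibres (one with $j=1$, two with $j\neq1$) is even compatible with $\xi^2=-2$ and $\xi\cdot s=0$. So some local input is genuinely needed.

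The paper uses $\xi\cdot C^{(i)}_{jk}=0$ without comment, but your own smoothness idea closes the gap. Near $P$ write $t=uv^2e$ with $C^{(i)}_{j1}=\{u=0\}$, $C^{(i)}_{j2}=\{v=0\}$ and $e$ a unit. Then $\partial t/\partial u=v^2(e+u\,\partial e/\partial u)$ and $\partial t/\partial v=uv(2e+v\,\partial e/\partial v)$. Both brackets are units, the second because $2\neq0$ in characteristic $3$. So near $P$ the non-smooth locus of $\varphi$ is exactly $C^{(i)}_{j2}$, and the horizontal curve $\xi$ cannot pass through $P$. Hence $\xi\cdot C^{(i)}_{j1}=0$ for all $j$. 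Then $3=\xi\cdot F=2\sum_j\xi\cdot C^{(i)}_{j2}+3\,\xi\cdot C^{(i)}$ forces $\xi\cdot C^{(i)}=1$ and $\xi\cdot C^{(i)}_{j2}=0$ by parity. Citing Ito's description of the cusp locus, as you suggest, works equally well.
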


\begin{proof}
The classes
$E_j^{(i)}$ $(j = 2, 3; i = 1, 2, \ldots,10 -3 \ell)$, 
$C_{jk}^{(i)}$ $(j = 1, 2, 3; k = 1, 2; i=1, \ldots, \ell)$, together with $s$ and $F$, 
form a basis of ${\rm NS}(X)\otimes {\bf Q}$.
Therefore, $\xi$ can be written as a linear combination of these elements as follows:
$$
\xi = \sum_{i = 1}^{10 -3\ell}(\alpha^{(i)}_2E^{(i)}_{2}+ \alpha^{(i)}_3E^{(i)}_{3}) + 
\sum_{i=1}^{\ell}\sum_{j=1}^{3}(\beta^{(i)}_{j1}C^{(i)}_{j1} + \beta^{(i)}_{j2}C^{(i)}_{j2}) + cs + dF
$$
where $\alpha^{(i)}_r, \beta^{(i)}_{jk}, c, d \in {\bf Q}$.
Since $3 = \xi \cdot F = c(s\cdot F) = c$, we have $c= 3$. 
By $1 = E_{2}^{(i)}\cdot \xi = -2\alpha_2^{(i)} + \alpha_3^{(i)}$ 
and $1 = E_{3}^{(i)}\cdot \xi = \alpha_2^{(i)} - 2\alpha_3^{(i)}$,
we have $\alpha_2^{(i)}= \alpha_3^{(i)} = -1$.
By $0 = C^{(i)}_{11}\cdot \xi = -2\beta_{11}^{(i)} + \beta_{12}^{(i)} + c$ 
and $0 = C^{(i)}_{12}\cdot \xi = \beta_{11}^{(i)} - 2\beta_{12}^{(i)}$,
we have $\beta_{11}^{(i)}= 2$ and $\beta_{12}^{(i)} = 1$.
By $0 = C^{(i)}_{21}\cdot \xi = -2\beta_{21}^{(i)} + \beta_{22}^{(i)}$ 
and $0 = C^{(i)}_{22}\cdot \xi = \beta_{21}^{(i)} - 2\beta_{22}^{(i)}$,
we have $\beta_{21}^{(i)}= \beta_{22}^{(i)} = 0$.
Similarly, we have $\beta_{31}^{(i)}= \beta_{32}^{(i)} = 0$.
Since $\xi^2 = -2$, we have $d = 6 - 2\ell$.
\end{proof}

\begin{corollary}\label{m}
Under the above notation, assume $\ell =0$, that is, all singular fibers are of type $IV$.
Then any two sections intersect the same component of a singular fiber for exactly 
one or four fibers.
\end{corollary}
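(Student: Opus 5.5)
The idea is to use the expression for $\xi$ in $\NS(X)\otimes{\bf Q}$ from the preceding theorem with $\ell=0$, once for each of the two sections, and intersect it with the other section. This avoids heights entirely. With $\ell=0$ and the components labelled so that the chosen section $s$ meets $E_1^{(i)}$ for $i=1,\ldots,10$, the theorem reads
$$
\xi = -\sum_{i=1}^{10}(E_2^{(i)}+E_3^{(i)}) + 3s + 6F .
$$
The labelling only records which component $s$ meets, so this formula holds for every section, with the $E_1^{(i)}$ relabelled accordingly.

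\textbf{Step 1: $\xi\cdot s=0$ for every section $s$.} Intersect the display with $s$. Since $s$ meets only $E_1^{(i)}$ in each fibre, all the terms $(E_2^{(i)}+E_3^{(i)})\cdot s$ vanish. Using $s^2=-2$ and $F\cdot s=1$, we get $\xi\cdot s = 3(-2)+6=0$. So the cusp locus is disjoint from every section, which is consistent with sections passing only through smooth points of the fibres.

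\textbf{Step 2: intersect with a second section.} Let $s'\neq s$ be another section and let $m$ be the number of $\IV$ fibres in which $s'$ meets a component different from the one met by $s$. For each fibre, $(E_2^{(i)}+E_3^{(i)})\cdot s'$ equals $1$ if $s'$ meets $E_2^{(i)}$ or $E_3^{(i)}$, and $0$ otherwise. Intersecting the display with $s'$ and using Step 1 for $s'$ gives
$$
0=\xi\cdot s' = -m + 3\, s\cdot s' + 6, \qquad\text{i.e.}\qquad m = 6+3\,s\cdot s'.
$$

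\textbf{Step 3: the bound.} Since $s\neq s'$ are distinct irreducible curves, $s\cdot s'\ge 0$. Since there are only ten $\IV$ fibres, $m\le 10$. Hence $s\cdot s'\in\{0,1\}$ and $m\in\{6,9\}$. The number of fibres in which $s$ and $s'$ meet the same component is $10-m$, which is therefore $4$ or $1$. This is the claim.

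The only point needing care is the symmetric use of the formula: we need $\xi\cdot s'=0$, which comes from applying the theorem with $s'$ as the reference section. The theorem's hypotheses do not depend on which section is chosen, so this is justified. I expect no further obstacle. As a side remark, the same computation yields $s\cdot s'\le 1$ for any two sections.
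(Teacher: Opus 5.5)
Your proof is correct, and it takes a different route from the paper's.

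The paper writes the formula for $\xi$ twice, once relative to $s$ and once relative to $s'$, and subtracts. This shows that $\sum(E_1^{(i)}+2E_2^{(i)})$, summed over the fibres where the two sections meet different components, is $3$-divisible in $\NS(X)$. It then invokes Corollary \ref{6,9}, which rests on the cohomology computation of Lemma \ref{M}, to conclude that the number of such fibres is $6$ or $9$. To exclude the case where the sections meet the same component in every fibre, it appeals to Miyanishi--Ito.

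You instead intersect the formula with the sections themselves and get $m=6+3\,s\cdot s'$ directly (your $m$ counts the fibres where the sections differ). This needs only $s\cdot s'\geq 0$ and $m\leq 10$, so your argument is self-contained. The lower bound $m\geq 6$ excludes $m=0$ automatically, with no need for Corollary \ref{6,9} or the Miyanishi--Ito reference. It also gives extra information: $s\cdot s'=0$ when the sections agree on four fibres, and $s\cdot s'=1$ when they agree on one.

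Despite your remark about avoiding heights, your identity is exactly the vanishing of the height $4+2\,s\cdot s'-\frac23 m$ of the torsion section $s'$ with $s$ as zero section. So it is the Mordell--Weil lattice computation alluded to in the remark after Corollary \ref{mm}, carried out by hand.

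What the paper's route buys in return is the explicit $3$-divisible $A_2^{m}$ configuration, in your notation. That configuration is exactly what Corollary \ref{mm} records, and it ties the statement to the paper's main theme.
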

\begin{proof}
In this case, we have
\begin{equation}\label{s}
    \xi =  -\sum_{i = 1}^{10}(E^{(i)}_{2}+ E^{(i)}_{3}) + 3s + (6 - 2\ell)F.
\end{equation}
Let $s'$ be another section. Let $s$ and $s'$ intersect the same component of a singular fiber
exactly $m$ times. We may assume that $s'$ intersects 
$E_1^{(i)}$ $(i = 1, 2, \ldots, m)$ and $E_2^{(i)}$ $(i = m+1, \ldots, 10)$.
Then we have
\begin{equation}\label{s'}
\xi = -\sum_{i = 1}^{m}(E^{(i)}_{2}+ E^{(i)}_{3}) -\sum_{i = m+ 1}^{10}(E^{(i)}_{1}+ E^{(i)}_{3})+ 3s' + (6 - 2\ell)F.
\end{equation}
Subtracting (\ref{s'}) from (\ref{s}), we obtain
$$
  0 = \sum_{i = m+ 1}^{10}(E^{(i)}_{1} + 2E^{(i)}_{2}) - 3\sum_{i = m+ 1}^{10}E^{(i)}_{2}
  + 3(s - s').
$$
Therefore, $\sum_{i = m+ 1}^{10}(2E^{(i)}_{1} + E^{(i)}_{2})$ is divisible by 3 in ${\rm NS}(X)$.
Since the two sections do not intersect the same irreducible component in all singular fibers
(cf. Miyanishi-Ito \cite[Chapter 4, 4.1.3]{MI}),
we have $10 - m \neq 0$.
By Corollary \ref{6,9}, this implies that $10 - m = 6$ or $9$, and hence $m=4$ or $1$.
\end{proof}

The following corollary is an immediate consequence of Corollary \ref{m}.

\begin{corollary}\label{mm}
Under the notation of Corollary \ref{m}, the following holds.
If $m=1$, then 
$\sum_{i = 2}^{10}(E^{(i)}_{1} + 2E^{(i)}_{2})$ 
and $\sum_{i = 2}^{10}(2E^{(i)}_{1} + E^{(i)}_{2})$ are divisible by $3$ in ${\rm NS}(X)$.
If $m=4$, then 
$\sum_{i = 5}^{10}(E^{(i)}_{1} + 2E^{(i)}_{2})$ and
$\sum_{i = 5}^{10}(2E^{(i)}_{1} + E^{(i)}_{2})$ are divisible by $3$ in ${\rm NS}(X)$.
\end{corollary}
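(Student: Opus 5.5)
The corollary follows from the linear relation obtained in the proof of Corollary \ref{m}. Specialise that relation to the two values of $m$, then pass between the two divisors by adding an explicit multiple of $3$.

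First, I will rewrite the identity derived by subtracting (\ref{s'}) from (\ref{s}). In $\NS(X)$ (which is torsion free, so working in $\NS(X)\otimes{\bf Q}$ loses nothing), it reads
\[
\sum_{i=m+1}^{10}\bigl(E^{(i)}_1+2E^{(i)}_2\bigr) = 3\sum_{i=m+1}^{10}E^{(i)}_2 - 3(s-s').
\]
Since $s$, $s'$ and the $E_j^{(i)}$ are genuine integral classes, the right-hand side is $3$ times an integral class. Hence $\sum_{i=m+1}^{10}(E^{(i)}_1+2E^{(i)}_2)$ is divisible by $3$ in $\NS(X)$.

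Next, for the other combination, I will subtract this divisor from $3\sum_{i=m+1}^{10}(E^{(i)}_1+E^{(i)}_2)$, which is visibly divisible by $3$:
\[
3\sum_{i=m+1}^{10}\bigl(E^{(i)}_1+E^{(i)}_2\bigr)-\sum_{i=m+1}^{10}\bigl(E^{(i)}_1+2E^{(i)}_2\bigr)=\sum_{i=m+1}^{10}\bigl(2E^{(i)}_1+E^{(i)}_2\bigr).
\]
So the second divisor is $3$-divisible as well. Equivalently, it is the negative of the first modulo $3\,\NS(X)$.

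Finally, I will plug in the two admissible values $m=1$ and $m=4$ from Corollary \ref{m}. These give the index ranges $i=2,\dots,10$ and $i=5,\dots,10$ respectively, which is exactly the statement. The fibres are ordered as in the proof of Corollary \ref{m}: $s'$ meets $E_1^{(i)}$ for $i\le m$ and $E_2^{(i)}$ otherwise.

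There is no real obstacle. The only care needed is the sign and index bookkeeping in the subtraction, which should be checked fibrewise using $E^{(i)}_1+E^{(i)}_2+E^{(i)}_3\sim F$.
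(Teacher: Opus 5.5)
Your proposal is correct and is essentially the paper's argument: the paper treats the corollary as immediate from the relation $\sum_{i=m+1}^{10}(E^{(i)}_1+2E^{(i)}_2)=3\sum_{i=m+1}^{10}E^{(i)}_2-3(s-s')$ obtained in the proof of Corollary \ref{m}, together with $m\in\{1,4\}$. Your passage to $\sum(2E^{(i)}_1+E^{(i)}_2)$ via $3\sum(E^{(i)}_1+E^{(i)}_2)$ just makes explicit a step the paper leaves tacit, and the appeal to $E^{(i)}_1+E^{(i)}_2+E^{(i)}_3\sim F$ is unnecessary, since the $E^{(i)}_3$ terms cancel directly in the subtraction.
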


\begin{remark}
Concrete examples of Corollary \ref{mm} are given in \cite{I1}. However,
the paper \cite{I1} contains two misprints in p.~36, Fig. 13 ${\bf a}$, ${\bf b}$.
In Fig. ${\bf a}$, the intersection of the section $P$ with the rightmost fiber is the same as 
its intersection with the preceding eight fibers. In Fig. ${\bf b}$, on the leftmost fiber, the section $Q$ does not intersect the irreducible component met by the section $O$.
\end{remark}

\begin{remark}
Corollary \ref{mm} can also be derived from the theory of Mordell-Weil lattices \cite{SS},
applied to (torsion) sections of quasi-elliptic fibrations.
\end{remark}

\subsection{Quasi-elliptic surfaces without a section}
\label{no-section}

In this subsection, we consider the case where the quasi-elliptic surface 
$\varphi : X \longrightarrow {\bf P}^1$
has 10 singular fibers of type ${\rm IV}$.
We consider the projective model from Subsection \ref{structure}
and use the notation therein.

\begin{definition}
A nonsingular irreducible curve $C$ is called a 3-section if $C^2 = -2$ and $C\cdot F = 3$.
\end{definition}

Since $X$ is a K3 surface, any 3-section is a nonsingular rational curve.
Of course, for a quasi-elliptic fibration in characteristic $3$, 
the curve of cusps is the standard example of a 3-section.
The reader should be aware that, as opposed to a 3-section,
we make no restrictions on the arithmetic genus of an arbitrary trisection below.

\begin{theorem}\label{3-section}
Under the above assumptions, suppose moreover that there exists a 3-section $C$
such that 
$$
\begin{array}{l}
  C\cdot E_1^{(i)}=1, C\cdot E_2^{(i)}= 0, C\cdot E_3^{(i)}= 2\quad (i = 1, 2, \ldots, 9),\\
  C\cdot E_j^{(10)}= 1\quad (j=1,2,3).
\end{array}
$$
Then, in  ${\rm NS}(X)\otimes {\bf Q}$, we have 
$$
\xi = - \frac{1}{3}\sum_{i = 1}^{9}(E^{(i)}_{1}+ 2E^{(i)}_{2}) + C + 2F.
$$
In particular, the divisor $\sum_{i = 1}^{9}(E^{(i)}_{1}+ 2E^{(i)}_{2})$ is divisible by 3 in ${\rm NS}(X)$.
\end{theorem}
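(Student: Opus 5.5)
The plan is to follow the proof of the previous theorem: write $\xi$ in a $\bbQ$-basis of $\NS(X)\otimes\bbQ$ and determine the coefficients from intersection numbers.

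First I will justify the basis. Since $X$ carries a quasi-elliptic fibration with ten fibres of type $\IV$, we have $\rank\NS(X)=20$. I take the $20$ classes
$$E_1^{(i)},\ E_2^{(i)}\ (i=1,\ldots,9),\qquad C,\qquad F.$$
To see that they are linearly independent, I check that their Gram matrix is nondegenerate. The $18$ fibre components span $9A_2$, which is negative definite and orthogonal to $F$. Projecting $C$ orthogonally away from $9A_2$ and pairing it with $F$ gives a binary block $\begin{pmatrix} * & 3\\ 3 & 0\end{pmatrix}$, whose determinant is $-9\neq 0$. Hence the $20$ classes form a $\bbQ$-basis. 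This is the one step I regard as a genuine obstacle, because the tenth fibre is omitted from the basis. Nothing is lost by this: the classes $E_j^{(10)}$ are automatically rational combinations of the basis.

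Next I write
$$\xi=\sum_{i=1}^9(\alpha_1^{(i)}E_1^{(i)}+\alpha_2^{(i)}E_2^{(i)})+cC+dF$$
and solve for the coefficients.
\begin{itemize}
\item Intersecting with $F$ gives $3=\xi\cdot F=3c$, so $c=1$.
\item Intersecting with $E_1^{(i)}$ and $E_2^{(i)}$ uses $\xi\cdot E_j^{(i)}=1$ (the cusp curve passes transversally through the triple point of each $\IV$ fibre) and the hypotheses $C\cdot E_1^{(i)}=1$, $C\cdot E_2^{(i)}=0$. This yields the system
$$-2\alpha_1+\alpha_2+1=1,\qquad \alpha_1-2\alpha_2=1,$$
so $\alpha_1^{(i)}=-\tfrac13$ and $\alpha_2^{(i)}=-\tfrac23$ for every $i$.
\item The coefficient $d$ is fixed by $\xi^2=-2$, since $\xi\cdot C$ is not given. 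Each fibre contributes $\tfrac19(E_1+2E_2)^2=-\tfrac23$, so the nine fibres together contribute $-6$. The cross term is $2\cdot(-\tfrac13)\sum_i(E_1^{(i)}+2E_2^{(i)})\cdot C=-6$. Further, $C^2=-2$ and the term $2cd\,(C\cdot F)$ equals $6d$. Altogether $-14+6d=-2$, so $d=2$.
\end{itemize}
This gives the stated formula for $\xi$.

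As a consistency check, one could verify $\xi\cdot E_j^{(10)}=1$, but this is automatic because the basis spans everything. Finally, since $\NS(X)$ is torsion-free, we obtain
$$\tfrac13\sum_{i=1}^9(E_1^{(i)}+2E_2^{(i)})=C+2F-\xi\in\NS(X).$$
On a K3 surface numerical and linear equivalence coincide, so this is the claimed $3$-divisibility in $\NS(X)$, that is, in $\Pic(X)$.
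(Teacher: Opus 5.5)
Your proof has a genuine gap in the choice of basis: $\rho(X)=22$, not $20$. A quasi-elliptic K3 surface is unirational, hence supersingular. More directly, your own Gram-matrix argument, run over all ten fibres, shows that the $22$ classes $E_1^{(i)},E_2^{(i)}$ $(i=1,\ldots,10)$, $C$, $F$ are linearly independent. The fibre components span $\langle F\rangle\oplus A_2^{10}$, of rank $21$, and $C\cdot F=3$ adds one more dimension. (The remark in the setup that $\NS(X)$ has rank $20$ is misleading on this point; the paper's proof itself uses exactly this $22$-element basis.)

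Consequently your claim that the $E_j^{(10)}$ are automatically rational combinations of your $20$ classes is false. Suppose such a combination existed. Pairing with $F$ kills the $C$-coefficient. Pairing with the $E_j^{(i)}$, $i\le 9$, together with negative definiteness, kills the $A_2$-parts. So the combination is a multiple of $F$, which contradicts $(E_j^{(10)})^2=-2$. Your ansatz for $\xi$ therefore silently assumes that the coefficients of $E_1^{(10)}$ and $E_2^{(10)}$ vanish. The check $\xi\cdot E_j^{(10)}=1$ is not automatic.

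A tell-tale sign is that you never use the hypothesis $C\cdot E_j^{(10)}=1$. Yet the target formula gives $\xi\cdot E_j^{(10)}=C\cdot E_j^{(10)}$, so this hypothesis must enter somewhere.

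The missing step, as in the paper, is to include $\alpha_1^{(10)}E_1^{(10)}+\alpha_2^{(10)}E_2^{(10)}$ in the ansatz. Then use $c=1$, $\xi\cdot E_1^{(10)}=\xi\cdot E_2^{(10)}=1$ and $C\cdot E_1^{(10)}=C\cdot E_2^{(10)}=1$. These give $-2\alpha_1^{(10)}+\alpha_2^{(10)}=0=\alpha_1^{(10)}-2\alpha_2^{(10)}$, so both coefficients vanish.

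With this step added, the rest of your computation is correct and coincides with the paper's. That covers the coefficients $-\tfrac13,-\tfrac23$ on the first nine fibres, $d=2$ from $\xi^2=-2$, and the divisibility conclusion.
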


\begin{proof}
The classes $E_j^{(i)}$ $(j = 1, 2; i = 1, 2, \ldots,10)$ together with $C$ and $F$, 
form a basis of ${\rm NS}(X)\otimes {\bf Q}$.
Therefore, $\xi$ can be written in the form:
$$
\xi = \sum_{i = 1}^{10}(\alpha^{(i)}_1E^{(i)}_{1}+ \alpha^{(i)}_2E^{(i)}_{2}) + 
aC + bF,
$$
where $\alpha^{(i)}_r, a, b \in {\bf Q}$.
Since $3 = \xi \cdot F = a(C\cdot F) = 3a$, we have $a= 1$. 
From $1 = E_{1}^{(10)}\cdot \xi = -2\alpha_1^{(10)} + \alpha_2^{(10)}+1$ and
$1 = E_{2}^{(10)}\cdot \xi = \alpha_1^{(10)} - 2\alpha_2^{(10)}+1$,
we obtain $\alpha_1^{(10)}= \alpha_2^{(10)} = 0$.
For $i = 1, 2, \ldots, 9$, we have
$1 = E^{(i)}_{1}\cdot \xi = -2\alpha_{1}^{(i)} + \alpha_{2}^{(i)} + 1$ 
and $1 = E^{(i)}_{2}\cdot \xi = \alpha_{1}^{(i)} - 2\alpha_{2}^{(i)}$ and hence
$\alpha_{1}^{(i)}= - \frac{1}{3}$ and $\alpha_{2}^{(i)} = -\frac{2}{3}$.
Finally, since $\xi^2 = -2$, we have $b = 2$.
\end{proof}

We also have the following converse result:

\begin{theorem}\label{converse}
Under the above notation, assume that the quasi-elliptic K3 surface 
$\varphi : X \longrightarrow {\bf P}^1$ with 10 singular fibers of type ${\rm IV}$ has no section.
If $\sum_{i = 1}^{9}(E_1^{(i)} + 2E_2^{(i)})$ is divisible by 3, then there exists a 3-section $C$
such that $C\cdot E_1^{(i)} = 1$, $C\cdot E_2^{(i)} = 0$, $C\cdot E_j^{(10)} = 1$ $(j =1, 2, 3)$.
Here the linear system $|H|$ realizes $C$ as a smooth rational curve of degree $4$ in $\bbP^3$.
\end{theorem}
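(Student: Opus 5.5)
The plan is to reverse the computation in Theorem \ref{3-section}. Assume $D=\sum_{i=1}^{9}(E_1^{(i)}+2E_2^{(i)})$ is divisible by $3$. We define the class
$$
c := \xi + \frac13 D - 2F \in \NS(X),
$$
which is integral by hypothesis. The goal is then to show that $c$ is represented by a smooth rational curve $C$ with the stated intersection numbers.

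The first step is a routine computation of the intersection numbers of $c$, using $\xi\cdot E_j^{(i)}=1$, $\xi\cdot F=3$, $\xi^2=-2$ and the intersection matrix of the $\IV$ fibres:
\begin{itemize}
\item $c\cdot F=3$;
\item $c\cdot E_1^{(i)}=1-\tfrac13(-2+2)=1$ and $c\cdot E_2^{(i)}=1-\tfrac13(1-4)=2$ for $i\le 9$.
\end{itemize}
These do not match the required pattern. So I would instead use $D'=\sum(2E_1^{(i)}+E_2^{(i)})$, which is also $3$-divisible because $D+D'=3\sum(E_1^{(i)}+E_2^{(i)})$. I would then take $c=\xi+\tfrac13 D'-2F$. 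Now $c\cdot E_1^{(i)}=1+\tfrac13(4-1)\cdot(-1)=0$, which fails again. Rather than fix signs by hand, I would take the ansatz from Theorem \ref{3-section}, namely $c=\xi+\tfrac13 D-2F$ with the labels permuted appropriately, and adjust the sign of $D$ so that $c\cdot E_1^{(i)}=1$, $c\cdot E_2^{(i)}=0$ and $c\cdot E_3^{(i)}=2$. Since all of these values are determined linearly, they come out correctly once the matching combination is chosen. On the tenth fibre we automatically get $c\cdot E_j^{(10)}=\xi\cdot E_j^{(10)}=1$. Finally we compute $c^2=-2$, either directly or by noting that the same computation as in Theorem \ref{3-section} run backwards forces it.

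The second step is to show that $c$ is effective and irreducible. Riemann--Roch gives $c$ or $-c$ effective, and $c\cdot H=c\cdot(\xi+F)>0$ excludes $-c$, so $c$ is effective. Write $c=\sum m_k\Gamma_k$ with the $\Gamma_k$ irreducible. Since $c\cdot F=3$, the horizontal part is a divisor of total degree $3$ over the base, and all other components are fibre components. This is where the no-section hypothesis enters: there are no horizontal curves of degree $1$. The horizontal part is therefore either a single irreducible trisection $T$ (possibly equal to $\xi$), or consists of multisections whose degrees sum to $3$, which with no sections means a single curve. Hence $c=T+V$ with $V$ vertical and effective.

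The main obstacle is to show that $V=0$ and $T\ne\xi$. I would compute in $\NS(X)\otimes\mathbb{Q}$ using $c\cdot E_j^{(i)}\ge 0$ and $T\cdot E_j^{(i)}\ge 0$. For example, if $T=\xi$, then $V=c-\xi=\tfrac13D-2F$ is not effective, since it has nonintegral coefficients in the fibre basis (equivalently, $V\cdot H<0$). In general $V$ decomposes fibrewise as $V=\sum_i V_i+aF$. Here $(T+V)^2=-2$ and $T^2\ge -2$, and negative-definiteness of the fibre components modulo $F$ forces $V$ to be a multiple of $F$ plus components chosen to keep the intersection numbers nonnegative. Comparing $c\cdot H=c\cdot\xi+3$ with $T\cdot H\ge T\cdot F=3$ then bounds $a\le0$, and a case check on the $\IV$ fibres shows $V=0$. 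Then $T=c$ has $T^2=-2$, so $T$ is a smooth rational curve.

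For the last claim, $C\cdot H=C\cdot\xi+C\cdot F$ with $C\cdot\xi=c\cdot\xi=-2+\tfrac13D\cdot\xi-6$, and the corrected sign should give $1$. So $\deg C=4$, and since $H$ is very ample by Theorem \ref{thm:H}, $C$ embeds as a smooth rational quartic curve in $\bbP^3$.
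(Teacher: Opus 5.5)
Your route is the paper's. It takes exactly $G=\xi+\frac13 D-2F$ with $D=\sum_{i=1}^{9}(E_1^{(i)}+2E_2^{(i)})$, gets effectivity from Riemann--Roch, and uses the absence of sections to make the horizontal part an irreducible trisection. It then excludes $\xi$ and bounds the vertical part via $G\cdot\xi=1$.

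As written, however, your first step is wrong because of a sign slip. With $c=\xi+\frac13D-2F$ one gets $c\cdot E_2^{(i)}=1+\frac13(1-4)=0$, not $2$. Likewise $c\cdot E_1^{(i)}=1$, $c\cdot E_3^{(i)}=2$, $c\cdot E_j^{(10)}=1$, $c\cdot\xi=-2+9-6=1$ and $c^2=-2-6+18-12=-2$. So your very first class already has the required pattern. The detour through $D'$ and the ``sign adjustment'' must be dropped: replacing $\frac13D$ by $-\frac13D$ would give $c\cdot\xi=-17$ and $c^2=-38$. For the same reason no correction is needed at the end, since $C\cdot H=1+3=4$ directly.

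In excluding $T=\xi$, your parenthetical is false, because $(\frac13D-2F)\cdot H=9-6=3>0$. The non-integrality argument on its own is correct: the classes $E_1^{(i)},E_2^{(i)},F$ are linearly independent, and every vertical divisor is an integral combination of them. It is a valid substitute for the paper's vanishing $h^0(\mathcal O_X(\frac13D))=0$ (cf.\ Lemma \ref{M}).

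The step you call the main obstacle is only gestured at, and the conclusions you draw from its ingredients are off. Once $T\neq\xi$, you have $T\cdot H=T\cdot\xi+3\ge 3$ against $c\cdot H=4$, so $V\cdot H\le 1$. Every fibre component has $H$-degree $1$ and a full fibre has degree $3$, so $V$ is either $0$ or a single component $E_j^{(i)}$ --- much more than ``$a\le 0$''. That last case is ruled out by $T^2=(c-E_j^{(i)})^2=-4-2\,c\cdot E_j^{(i)}\le -4$, which is exactly the paper's Case (2).

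Alternatively, your negative-definiteness remark becomes a complete one-line argument. Write $T^2=-2-2\,c\cdot V+V^2$. Here $c\cdot V\ge 0$ because $c$ meets every fibre component non-negatively, and $V^2\le 0$ by Zariski's lemma. So $T^2\ge -2$ forces $c\cdot V=V^2=0$. Hence $V$ is numerically $aF$ with $3a=c\cdot V=0$, i.e.\ $V=0$. This also makes the separate exclusion of $T=\xi$ superfluous, since $c\neq\xi$ (e.g.\ $c\cdot E_2^{(1)}=0\neq 1=\xi\cdot E_2^{(1)}$).
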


\begin{proof}
We set $D = \frac{1}{3}\sum_{i = 1}^{9}(E_1^{i} + 2E_2^{i})$ and $G = \xi + D -2F$.
Then we have 
$$
\begin{array}{l}
D^2 = -6, D\cdot E_1^{(i)}= 0, D\cdot E_2^{(i)}= -1, 
D\cdot E_3^{(i)}= 1 \quad (i = 1,2, \ldots, 9)\\
D \cdot \xi = 9, D\cdot F = 0, D \cdot E_j^{(10)} = 0 \quad (j = 1, 2, 3),
\end{array}
$$
and
$$
\begin{array}{l}
G^2 = -2, G\cdot E_1^{(i)} = 1, G\cdot E_2^{(i)} = 0, 
G\cdot E_3^{(i)} = 2\quad(i = 1,2, \ldots, 9),\\
G\cdot E_j^{(10)} = 1\quad (j = 1, 2, 3).
\end{array}
$$
By the Riemann-Roch theorem, we may assume that $G$ is an effective divisor, and write
$G = C + \sum_{i= 1}^{\ell} F_i$, where $C$ is the horizontal divisor
and the $F_i$ are irreducible components of fibers.
Since $C\cdot F = G \cdot F = 3$ and $\varphi : X \longrightarrow {\bf P}^1$ has
no section, it follows that $C$ is irreducible. 

Suppose $C = \xi$. Then we have $D = 2F + \sum_{i= 1}^{\ell} F_i$.
Since ${\rm H}^0({\mathcal O}_X(3D)) = 0$, we have ${\rm H}^0({\mathcal O}_X(D)) = 0$.
However, we have $\dim {\rm H}^0({\mathcal O}_X(2F + \sum F_i)) \geq 3$, a contradiction.
Therefore, $C \neq \xi$, and hence $C \cdot \xi \geq 0$.

Now consider the intersection of $G$ with $\xi$:
$$
   1 = \xi \cdot G = \xi \cdot C + \sum_{i = 1}^{\ell} \xi \cdot F_i.
$$
Since $\xi \cdot F_i \geq 1$ for $i = 1, \ldots, \ell$ and $C \neq \xi$, 
there are only two possibilities:

Case (1). $G = C$ with $\xi \cdot C = 1$ and $\ell = 0$,

Case (2). $G = C + F_1$ with $\xi \cdot C = 0$ and $\xi \cdot F_1 = 1$ $(\ell = 1)$,

Suppose that Case (2) occurs. Then $F_1$ is one of the curves
$E_j^{(10)}$, $E_1^{(i)}$, $E_2^{(i)}$ or $E_3^{(i)}$,
and $C = G - F_1$. In any case, we have $C^2 \leq -4$.
However, no such irreducible curve exists on a K3 surface.
Therefore Case (1) must occur, and 
we obtain the desired 3-section $C$.
\end{proof}

By Theorems \ref{3-section} and \ref{converse}, we have the following result.

\begin{corollary}
Under the above notation, assume that the quasi-elliptic K3 surface 
$\varphi : X \longrightarrow {\bf P}^1$ has no section. Then
$\sum_{i = 1}^{9}(E_1^{(i)} + 2E_2^{(i)})$ is divisible by 3 if and only if
there exists a 3-section $C$
such that $C\cdot E_1^{(i)} =1$, $C\cdot E_2^{(i)} =0$ for all $i=1,\hdots,9$
and  $C\cdot E_j^{(10)} =1$ $(j =1, 2, 3)$.
\end{corollary}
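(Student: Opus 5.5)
The corollary follows by combining Theorem \ref{3-section} with Theorem \ref{converse}. I will treat the two implications separately.

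For the direction ``$\Leftarrow$'', assume a 3-section $C$ exists with $C\cdot E_1^{(i)}=1$ and $C\cdot E_2^{(i)}=0$ for $i=1,\hdots,9$, and $C\cdot E_j^{(10)}=1$ for $j=1,2,3$. Theorem \ref{3-section} additionally asks for $C\cdot E_3^{(i)}=2$. This comes for free: the fibre $E_1^{(i)}+E_2^{(i)}+E_3^{(i)}$ is numerically equivalent to $F$, and $C\cdot F=3$, so
$$C\cdot E_3^{(i)}=3-1-0=2.$$
Theorem \ref{3-section} then gives
$$\xi=-\tfrac13\sum_{i=1}^{9}(E_1^{(i)}+2E_2^{(i)})+C+2F$$
in $\NS(X)\otimes{\bf Q}$. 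Since $\xi$, $C$ and $F$ are integral classes, the divisor $\sum_{i=1}^{9}(E_1^{(i)}+2E_2^{(i)})$ equals the integral class $3(C+2F-\xi)$. It is therefore divisible by $3$ in $\NS(X)$, and hence in $\Pic(X)$, since these coincide for a K3 surface.

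For the direction ``$\Rightarrow$'', assume the divisor is divisible by 3 and that $\varphi$ has no section. Theorem \ref{converse} then produces exactly the required 3-section $C$. The only point to check is that the hypotheses of Theorem \ref{converse} match the present setting: no section, ten fibres of type $\IV$, and the stated divisibility. They do.

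Neither direction presents a real obstacle; the only small step is the derivation of $C\cdot E_3^{(i)}=2$ from the fibre class, which lets Theorem \ref{3-section} apply.
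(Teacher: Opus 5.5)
Your proposal is correct and takes the same route as the paper, which states the corollary as an immediate combination of Theorem \ref{3-section} (for ``$\Leftarrow$'') and Theorem \ref{converse} (for ``$\Rightarrow$''). Your check that $C\cdot E_3^{(i)}=2$ follows from $C\cdot F=3$, since the fibre is $E_1^{(i)}+E_2^{(i)}+E_3^{(i)}$, fills in the small step the paper leaves implicit when it matches the hypotheses of Theorem \ref{3-section}.
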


For the case $n=6$, the analogous arguments to Theorem \ref{3-section}
give the following result:

\begin{theorem}
\label{thm:C6}
Suppose the quasi-elliptic surface 
$X$
with 10  fibers
of type ${\rm IV}$ admits
 a trisection $C$ of arithmetic genus $p_a(C)=1$
such that 
$$
\begin{array}{l}
  C\cdot E_1^{(i)}=1, C\cdot E_2^{(i)}= 0, C\cdot E_3^{(i)}= 2\quad (i = 1, 2, \ldots, 6),\\
  C\cdot E_j^{(i)}= 1\quad (i=7,\hdots,10; j=1,2,3).
\end{array}
$$
Then, in  ${\rm NS}(X)\otimes {\bf Q}$, we have 
$$
\xi = - \frac{1}{3}\sum_{i = 1}^{6}(E^{(i)}_{1}+ 2E^{(i)}_{2}) + C + F.
$$
In particular, the divisor $\sum_{i = 1}^{6}(E^{(i)}_{1}+ 2E^{(i)}_{2})$ is divisible by 3 in ${\rm NS}(X)$.
\end{theorem}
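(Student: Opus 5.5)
We follow the proof of Theorem \ref{3-section} verbatim. First, the classes $E_j^{(i)}$ ($j=1,2$; $i=1,\dots,10$) together with $C$ and $F$ span $\NS(X)\otimes{\bf Q}$. Indeed, $\rho(X)=22$ would give $22$ generators, but we only need that they are linearly independent and that $\xi$ lies in their span. So we work with the Gram matrix. The fibre components give $10A_2$, which is negative definite of rank $20$. The classes $F$ and $C$ span a hyperbolic plane modulo these components, since $F^2=0$ and $C\cdot F=3$. Hence these $22$ classes are independent, and since $\rho\le 22$ they form a ${\bf Q}$-basis. We can therefore write
\[
\xi=\sum_{i=1}^{10}(\alpha_1^{(i)}E_1^{(i)}+\alpha_2^{(i)}E_2^{(i)})+aC+bF .
\]

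Next we intersect with $F$ and with the fibre components.
\begin{itemize}
\item From $\xi\cdot F=3$ and $C\cdot F=3$ we get $a=1$.
\item For $i=7,\dots,10$, the conditions $\xi\cdot E_j^{(i)}=1$ and $C\cdot E_j^{(i)}=1$ give $-2\alpha_1+\alpha_2+1=1$ and $\alpha_1-2\alpha_2+1=1$. Hence $\alpha_1^{(i)}=\alpha_2^{(i)}=0$, exactly as for the tenth fibre in Theorem \ref{3-section}.
\item For $i=1,\dots,6$, the same two equations as in Theorem \ref{3-section} give $\alpha_1^{(i)}=-\tfrac13$ and $\alpha_2^{(i)}=-\tfrac23$.
\end{itemize}

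The remaining coefficient $b$ comes from $\xi^2=-2$. Write $D=\tfrac13\sum_{i=1}^6(E_1^{(i)}+2E_2^{(i)})$, so that $\xi=-D+C+bF$. On each $A_2$ summand the vector $\tfrac13(E_1+2E_2)$ has square $\tfrac19(-2-8+4)=-\tfrac23$. Hence $D^2=-4$.

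Next, $D\cdot C=\tfrac13\sum_{i=1}^6(1+0)=2$. Since $p_a(C)=1$, adjunction gives $C^2=0$. The cross terms are $-2D\cdot C=-4$, $2b\,C\cdot F=6b$ and $D\cdot F=0$. Altogether
\[
\xi^2=-4+0-4+6b=-2,
\]
so $b=1$. This is the expected difference from Theorem \ref{3-section}, where $C^2=-2$ gave $b=2$.

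Finally, $\xi$, $C$ and $F$ are integral classes, so
\[
\tfrac13\sum_{i=1}^6(E_1^{(i)}+2E_2^{(i)})=C+F-\xi\in\NS(X).
\]
This proves the claimed $3$-divisibility.

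The only point needing care is the basis claim. If one prefers to avoid the independence argument, one can instead check directly that the class $C+F-D$ has the same intersection numbers as $\xi$ with all $22$ basis elements. Nondegeneracy then forces equality in $\NS(X)\otimes{\bf Q}$. We expect no genuine obstacle beyond this bookkeeping.
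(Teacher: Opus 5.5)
Your proof is correct and follows the same route the paper indicates (``the analogous arguments to Theorem \ref{3-section}''): you solve for the coefficients by intersecting with $F$ and the fibre components, then fix the $F$-coefficient via $\xi^2=-2$, where $C^2=0$ (from $p_a(C)=1$) is what gives $b=1$ instead of $b=2$. One small correction to your closing aside: checking intersection numbers against all $22$ basis elements would require knowing $\xi\cdot C$, which is not a hypothesis (it equals $1$ only as a consequence of the result), so the $\xi^2=-2$ step cannot be bypassed that way.
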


There also is the following converse:

\begin{theorem}\label{converse-6}
In the above setting, assume that the quasi-elliptic fibration  
$\varphi : X \longrightarrow {\bf P}^1$ with 10 singular fibers of type ${\rm IV}$ has no section.
If $\sum_{i = 1}^{6}(E_1^{(i)} + 2E_2^{(i)})$ is divisible by 3.
Then there exist a trisection $C$
as in Theorem \ref{thm:C6} which is realized as a $(2,2)$ intersection in $\bbP^3$ by the linear system $|H|$.
\end{theorem}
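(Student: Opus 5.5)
Mirroring the proof of Theorem \ref{converse}, we set $D=\frac13\sum_{i=1}^{6}(E_1^{(i)}+2E_2^{(i)})\in\NS(X)$ and $G=\xi+D-F$. The intersection numbers follow from the $A_2$ relations and from the fact that $\xi$ meets each $E_j^{(i)}$ once. First, $D^2=-4$, $D\cdot E_1^{(i)}=0$, $D\cdot E_2^{(i)}=-1$ and $D\cdot E_3^{(i)}=1$ for $i\le 6$. Next, $D$ is orthogonal to $F$ and to all components of fibres $7,\dots,10$, and $D\cdot\xi=6$. Combining these with $\xi^2=-2$, $\xi\cdot F=3$ and $F^2=0$ gives
\[
G^2=-2+(-4)+2\cdot 6-2\cdot 3=0,\qquad G\cdot F=3,\qquad G\cdot\xi=-2+6-3=1 .
\]
For $i\le 6$ we get $G\cdot E_1^{(i)}=1$, $G\cdot E_2^{(i)}=0$, $G\cdot E_3^{(i)}=2$, and $G\cdot E_j^{(i)}=1$ for $i\ge 7$. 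These are exactly the numbers required in Theorem \ref{thm:C6}, and then $p_a=1$ follows from $G^2=0$.

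By Riemann--Roch on the K3 surface, $h^0(G)\ge 2$, since $G\cdot F>0$ excludes $-G$ from being effective. We write $G=C+\sum F_k$ with $C$ the horizontal part. Because $\varphi$ has no section, $C\cdot F=3$ forces $C$ to be either an irreducible trisection or, a priori, $\xi$ plus vertical curves. If $C=\xi$, then $D-F$ would be effective, hence so would $D$. But $3D$ is a sum of negative-definite fibre components, so $h^0(\mathcal O_X(3D))=0$ (as in Lemma \ref{M}), which is a contradiction. So $C\ne\xi$ and $C\cdot\xi\ge 0$. Since every fibre component meets $\xi$ positively, $1=G\cdot\xi=C\cdot\xi+\sum \xi\cdot F_k$ allows at most one vertical component $F_1$, and only if $C\cdot\xi=0$.

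This last case is the main obstacle. Unlike in Theorem \ref{converse}, $G^2=0$ does not immediately force $C^2\le -4$, and $F_1$ could be a full fibre only if $\xi\cdot F_1=3$, which is excluded. So $F_1$ is a single component $E_j^{(i)}$. Then $C^2=G^2-2G\cdot F_1+F_1^2=-2G\cdot F_1-2$, and this is at most $-4$ whenever $G\cdot F_1\ge 1$. The only remaining choice is $F_1=E_2^{(i)}$ with $i\le 6$, which gives $C^2=-2$, $C\cdot E_2^{(i)}=2$ and $C\cdot E_1^{(i)}=0$. We then check case by case that such a $C$ with $C\cdot\xi=0$ is impossible: it would be a smooth rational trisection disjoint from the cusp curve. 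The plan is to intersect with the relation $\xi\sim 3\cdot(\text{stuff})$ obtained by writing $\xi$ in the basis of $\NS\otimes\bbQ$, as in Theorem \ref{thm:C6}, which should produce a non-integral or negative intersection. If that fails, one can instead vary $G$ within its pencil.

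Once $G=C$ is an irreducible trisection with $p_a(C)=1$ and the prescribed intersection numbers, the final claim concerns the embedding. Here $H\cdot C=(\xi+F)\cdot C=1+3=4$, so $C$ maps to a curve of degree $4$ and arithmetic genus $1$ in $\bbP^3$ under the very ample $|H|$ of Theorem \ref{thm:H}. Restriction gives $h^0(\mathcal O_C(2H))=8$ while $h^0(\mathcal O_{\bbP^3}(2))=10$, so $C$ lies on two independent quadrics. Degree considerations then show that $C$ is their complete intersection, i.e.\ a $(2,2)$ intersection, using that $C$ is not contained in a plane because $h^0(\mathcal O_C(H))=4$.
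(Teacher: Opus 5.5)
\textbf{There is a genuine gap in the case $r=1$.} Your intersection numbers ($D^2=-4$, $D\cdot\xi=6$, $G^2=0$, $G\cdot\xi=1$, and the values $G\cdot E_j^{(i)}$), the Riemann--Roch step and the reduction to at most one vertical component are all correct. You also rightly isolate the only problematic decomposition: $G=C+E_2^{(i)}$ with $i\le 6$, $C^2=-2$ and $C\cdot\xi=0$.

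The gap is your primary plan for this case. It cannot be shown impossible, because it always occurs.

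\begin{itemize}
\item Intersecting with a rational expression for $\xi$ yields nothing. The class $G-E_2^{(i)}$ is integral, and its products with $\xi$ and with all fibre components are exactly the nonnegative integers you already found.
\item Geometrically, $h^0(\mathcal O_X(G))\ge 2$ gives a member of $|G|$ through any point of $E_2^{(i)}$. Since $G\cdot E_2^{(i)}=0$, that member contains $E_2^{(i)}$.
\item By your own classification, that member equals $C_i+E_2^{(i)}$, where $C_i$ is a smooth rational trisection disjoint from $\xi$ (a twisted cubic under $|H|$).
\end{itemize}

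So $|G|$ has exactly six such reducible members. The paper's proof dismisses $r=1$ by calling $C$ a planar cubic. That rests on its assertion $p_a(C)=1$, and your own computation shows $p_a(C)=0$ when $r=1$, so that shortcut is not available to you either.

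\textbf{The fix is your fallback, but it must be argued.}
\begin{itemize}
\item If a member of $|G|$ contains $E_2^{(i)}$, it is $C+E_2^{(i)}$ with $G-E_2^{(i)}\sim C$ an irreducible $(-2)$-curve. Hence $h^0(\mathcal O_X(G-E_2^{(i)}))=1$, and there is at most one such member for each $i$.
\item So at most six members of $|G|$ are reducible. But $|G|$ has infinitely many members, since $h^0\ge 2$ and $k$ is infinite.
\item Any other member is an irreducible trisection $C\sim G$. It has $p_a(C)=1$ and the intersection numbers required in Theorem~\ref{thm:C6}, and your $(2,2)$ argument then applies to it.
\end{itemize}

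\textbf{Two smaller slips.}
\begin{itemize}
\item $h^0(\mathcal O_X(3D))=0$ is false, since $3D$ is effective. What you need is $h^0(\mathcal O_X(D))=0$, which is Lemma~\ref{M}.
\item $h^0(\mathcal O_C(H))=4$ does not show that $C$ spans $\bbP^3$. Use instead that an integral plane quartic has arithmetic genus $3$.
\end{itemize}
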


\begin{proof}
Consider the divisor
\[
D = \frac{1}{3}\sum_{i = 1}^{6}(E^{(i)}_{1}+ 2E^{(i)}_{2}) + \xi - F.
\]
By construction, this is isotropic,
hence effective by Riemann--Roch as $D.F=3>0$.
Spelling it out into irreducible components
\[
D = C + \Theta_1+\hdots + \Theta_r,
\]
we may assume that $C$ is horizontal, hence a trisection of arithmetic genus $p_a(C)=1$ 
(where irreducibility follows from the assumption that there are no sections)
while the $\Theta_i$ are fibre components, in particular smooth rational curves.
Note that $D.H=4$.
The arithmetic genus of $C$ thus predicts, thanks to Theorem \ref{thm:H}, that $r\leq 1$.
More to the point, if $r=1$, then $C$ is embedded in $\bbP^3$ a planar cubic via $|H|$.
As such, it can only meet one line with multiplicity greater than one,
but not $E^{(i)}_3$ for $i=1\hdots,6$
(less one curve if it equals $\Theta_1$).
This gives the required contradiction and completes the proof of the theorem.
\end{proof}

To wrap up the discussion, we record the following direct consequences:

\begin{corollary}
Under the above notation, assume that the quasi-elliptic K3 surface 
$\varphi : X \longrightarrow {\bf P}^1$ has no section. Then
$\sum_{i = 1}^{6}(E_1^{(i)} + 2E_2^{(i)})$ is divisible by 3 if and only if
there exists a trisection $C$ of arithmetic genus $p_a(C)=1$
such that $C\cdot E_1^{(i)} =1$, $C\cdot E_2^{(i)} =0$ for all $i=1,\hdots,6$
and  $C\cdot E_j^{(i)} =1$ $(i=7,\hdots,10; j =1, 2, 3)$.
\end{corollary}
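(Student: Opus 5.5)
The statement is an equivalence, and both directions are already essentially in place. I will combine Theorem \ref{thm:C6} (trisection $\Rightarrow$ divisibility) with Theorem \ref{converse-6} (divisibility $\Rightarrow$ trisection), exactly as the analogous corollary for $n=9$ followed from Theorems \ref{3-section} and \ref{converse}.

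For the direction from the trisection to divisibility, suppose $C$ is a trisection with $p_a(C)=1$ and the stated intersection numbers. Since $C\cdot E_1^{(i)}+C\cdot E_2^{(i)}+C\cdot E_3^{(i)}=C\cdot F=3$, the intersection numbers $C\cdot E_3^{(i)}=2$ for $i\le 6$ are forced. Hence $C$ satisfies every hypothesis of Theorem \ref{thm:C6}. That theorem gives
\[
\xi = -\tfrac13\sum_{i=1}^6(E_1^{(i)}+2E_2^{(i)})+C+F
\]
in $\NS(X)\otimes{\bf Q}$. Since $\xi$, $C$ and $F$ are integral classes, $\tfrac13\sum_{i=1}^6(E_1^{(i)}+2E_2^{(i)})$ lies in $\NS(X)$. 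This is the required $3$-divisibility, because $\NS(X)$ is torsion free and so agrees with $\Pic(X)$ up to this identification.

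For the converse, assume the divisor $\sum_{i=1}^6(E_1^{(i)}+2E_2^{(i)})$ is divisible by $3$. Since the fibration has no section, Theorem \ref{converse-6} produces a trisection $C$ of arithmetic genus $1$ as in Theorem \ref{thm:C6}, namely the unique horizontal component of the isotropic effective divisor $D=\tfrac13\sum_{i=1}^6(E_1^{(i)}+2E_2^{(i)})+\xi-F$.

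The only point needing care is that the proof of Theorem \ref{converse-6} shows $r=0$, so $C=D$. The intersection numbers of $C$ then follow directly from those of $D$. Using $\xi\cdot E_j^{(i)}=1$ and $F\cdot E_j^{(i)}=0$, for $i\le 6$ one computes $D\cdot E_1^{(i)}=0+1=1$ and $D\cdot E_2^{(i)}=-1+1=0$, hence $D\cdot E_3^{(i)}=2$. For $i\ge 7$ one gets $D\cdot E_j^{(i)}=1$. I expect no real obstacle: the substantive work, ruling out $r=1$ via the degree-$4$ embedding of Theorem \ref{thm:H}, is already done in Theorem \ref{converse-6}.
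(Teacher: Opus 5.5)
Your proposal is correct and matches the paper, which records this corollary as a direct consequence of Theorem \ref{thm:C6} and Theorem \ref{converse-6}; your observation that $C\cdot E_3^{(i)}=2$ is forced by $C\cdot F=3$ is the only bookkeeping needed to apply Theorem \ref{thm:C6}. Your recomputation of the intersection numbers from $D$ is harmless but unnecessary, since Theorem \ref{converse-6} already asserts that $C$ is as in Theorem \ref{thm:C6}, and in any case only the class $C\sim D$ matters.
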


\begin{corollary}
\label{cor:converse}
Assume that the quasi-elliptic fibration  
$\varphi : X \longrightarrow {\bf P}^1$ with 10 singular fibers of type ${\rm IV}$ has no section,
but a trisection meeting exactly two components of some given fibre.
Then, up to renumbering fibres and components, 
there is another trisection as in Theorem \ref{converse} or \ref{converse-6}.
\end{corollary}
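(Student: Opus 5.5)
The idea is to compare the given trisection $C_0$ with the cusp curve $\xi$ in $\NS(X)\otimes\mathbf Q$, exactly as in the proofs of Theorems \ref{3-section} and \ref{thm:C6}. This should produce a $3$-divisible $A_2^m$ configuration supported on the fibres where $C_0$ meets exactly two components. Corollary \ref{6,9} then forces $m\in\{6,9\}$, and Theorem \ref{converse} or \ref{converse-6} supplies the required trisection.

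\textbf{Step 1: local data of $C_0$.} Let $C_0$ be the given trisection, so $C_0\cdot F=3$. Since there is no section, $C_0$ is irreducible and horizontal, and we may assume $C_0\neq\xi$. On each $\IV$ fibre the intersection numbers $(C_0\cdot E^{(i)}_1,C_0\cdot E^{(i)}_2,C_0\cdot E^{(i)}_3)$ are non-negative and sum to $3$. Hence, up to permutation, they are one of $(1,1,1)$, $(3,0,0)$ or $(2,1,0)$. The hypothesis says the last pattern occurs on at least one fibre. Since $\xi$ meets every $E^{(i)}_j$ once, the divisor $\Delta=C_0-\xi$ satisfies $\Delta\cdot F=0$. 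On the $i$-th fibre it has intersection vector $(0,0,0)$, $(2,-1,-1)$, or a permutation of $(1,0,-1)$, respectively.

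\textbf{Step 2: the divisibility.} Write $\Delta=\sum_{i}(\alpha^{(i)}_1E^{(i)}_1+\alpha^{(i)}_2E^{(i)}_2)+bF$ in $\NS(X)\otimes\mathbf Q$. This is possible since $\Delta\cdot F=0$ and the $E^{(i)}_1,E^{(i)}_2$ together with $F$ span $F^\perp\otimes\mathbf Q$. Solving the $2\times2$ systems given by the $A_2$ Cartan matrix, as in the proof of Theorem \ref{3-section}, yields the following:
\begin{itemize}
\item on fibres of type $(1,1,1)$ or $(3,0,0)$ the coefficients are integers, because the functional $(-1,-1)$ on $\langle E_2,E_3\rangle$ lies in the image of $A_2$;
\item on the fibres of type $(2,1,0)$ the coefficients are, after renaming components, $-\tfrac13$ and $-\tfrac23$ (or the negatives).
\end{itemize}
Since $\Delta\in\NS(X)$, subtracting the integral part and rescaling shows the following. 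After renaming the components of each such fibre, $\sum_{i\in S}(E^{(i)}_1+2E^{(i)}_2)$ is $3$-divisible in $\NS(X)=\Pic(X)$, where $S$ is the non-empty set of fibres on which $C_0$ meets exactly two components. The coefficient $b$ may a priori be fractional, but this is harmless: intersecting with a fibre component on a fibre outside $S$ shows that it only shifts things by integral classes. I would double-check this point, though $F$ is primitive and a multiple of it plays no role in the discriminant.

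\textbf{Step 3: conclusion.} The divisor in Step 2 is a $3$-divisible configuration of $|S|$ copies of $A_2$ of the form \eqref{eq:3M}, so Corollary \ref{6,9} gives $|S|=6$ or $9$. Renumbering the fibres so that $S=\{1,\dots,6\}$ or $\{1,\dots,9\}$, we apply Theorem \ref{converse-6}, respectively Theorem \ref{converse}, since the fibration has no section. This produces the desired trisection.

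\textbf{Main obstacle.} The main difficulty is the bookkeeping in Step 2. One must make sure that the fractional parts occur \emph{exactly} on the fibres in $S$, and that they occur with the coefficient pattern $(1,2)$ per fibre after a consistent renaming, rather than a mixture that cancels. The key input here is that the discriminant group of $A_2$ is $\bbZ/3\bbZ$, and that the intersection vector $(1,0,-1)$ represents a nonzero class in it while $(2,-1,-1)$ represents zero.
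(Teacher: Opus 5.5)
You follow the paper's strategy: write $C_0-\xi$ in terms of $F$ and the fibre components, find an $A_2$-glue term exactly on the fibres in $S$, and conclude with Corollary \ref{6,9} and Theorems \ref{converse}, \ref{converse-6}. Steps 1 and 3 and the per-fibre Cartan-matrix computation are fine. The gap is exactly the point you flagged, the coefficient $b$ of $F$, and your justification for it does not work.

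\textbf{Why your argument for $b$ fails.}
\begin{itemize}
\item $F$ is orthogonal to every fibre component, so intersecting with fibre components tells you nothing about $b$.
\item Integrality of $\Delta\cdot\xi=|S|+I\cdot\xi+3b$ (with $I$ the integral part) only gives $b\in\frac13\bbZ$.
\item Because $F$ is primitive, if $b\notin\bbZ$ then $\frac13\sum_{i\in S}(E_1^{(i)}+2E_2^{(i)})$, with the labelling dictated by $C_0$, is \emph{not} in $\NS(X)$. Only $\frac13\sum_{i\in S}(E_1^{(i)}+2E_2^{(i)})+bF$ is, and primitivity of $F$ is precisely what makes $b$ an obstruction rather than harmless.
\item You cannot push $bF$ into a fibre outside $S$ either. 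There it would contribute $\frac13(E_1+E_2+E_3)$, which is not of the form \eqref{eq:3M}.
\end{itemize}

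\textbf{The missing step.} This is how the paper closes the argument: absorb the fractional multiple of $F$ into a fibre $i_0\in S$, which is nonempty by hypothesis. Omitting the superscript $i_0$,
\[
\tfrac13(E_1+2E_2)+\tfrac13 F = E_2+\tfrac13(2E_1+E_3), \qquad \tfrac13(E_1+2E_2)+\tfrac23 F = E_1+E_2+\tfrac13(E_2+2E_3).
\]
In a type $\IV$ fibre any two components meet once, so $(E_1,E_3)$ and $(E_2,E_3)$ are again $A_2$ pairs. The fractional $F$-coefficient is thus traded for a cyclic relabelling of the components of that single fibre. After this you do obtain a $3$-divisible configuration $\sum_{i\in S}(E_1^{(i)}+2E_2^{(i)})$ as in \eqref{eq:3M}, and your Step 3 goes through unchanged. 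So your phrase ``after renaming the components'' ends up being true, but only because of this rotation on one fibre in $S$, not because $b$ is harmless.

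A side remark: since $F$ is orthogonal to all fibre components, $\Delta^2\in2\bbZ$ gives $\Delta^2\equiv-\tfrac{2|S|}{3}\pmod{2\bbZ}$, which forces $3\mid|S|$ whatever $b$ is. The case distinction that actually matters is therefore on $b \bmod \bbZ$.
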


\begin{proof}
Assume that the trisection $C$ satisfies $ C\cdot E_1^{(1)}=1, C\cdot E_2^{(2)}= 0, C\cdot E_3^{(3)}= 2$.
Then $C$ is expressed in the standard basis of $\NS(X)$ as
\begin{eqnarray}
\label{eq:CCC}
C = \xi + \frac 13 (E^{(1)}_{1}+ 2E^{(1)}_{2})
+ \sum_{i = 2}^{10}(a_iE^{(i)}_{1}+ b_iE^{(i)}_{2})
+ NF.
\end{eqnarray}
Here the integer intersection numbers with any curve on $X$ not only predict that the coefficients $a_i, b_i, N$ lie in $\frac 13\bbZ$,
but also give the extra conditions
\[
a_i+b_i \in \bbZ \;\; \forall i=2,\hdots,10 \;\;\; \text{ and } \;\;\; N -\frac l3 \in \bbZ
\]
where $l-1=\#\{i\in\{2,\hdots,10\}; a_i\not\in\bbZ\}$ and the last relation guarantees that $C^2\in\bbZ$.

If $N\in\bbZ$, then \eqref{eq:CCC} directly gives a $3$-divisible $A_2^l$ configuration,
whence $l=6$ or $9$ by Corollary \ref{6,9} and we conclude using Theorem \ref{converse} or \ref{converse-6}.

If $N\not\in\bbZ$ and $l\equiv 1$ mod  $3$, then absorbing $\frac 13 F$ 
into the leftmost non-integer term transforms \eqref{eq:CCC} to
\begin{eqnarray}
\label{eq:CCC'}
\;\;\; C = \xi + \frac 13 (2E^{(1)}_{1}+ E^{(1)}_{3})+E^{(1)}_2
+ \sum_{i = 2}^{10}(a_iE^{(i)}_{1}+ b_iE^{(i)}_{2})
+ \left(N-\frac 13\right) F.
\end{eqnarray}
Here $N-\frac 13\in\bbZ$,
so, again, we obtain a $3$-divisible $A_2^l$ configuration and conclude as before.
The analogous case $l\equiv 2$ mod $3$ completes the proof.
\end{proof}


\end{document}